    \newcommand{\ran}{\mbox{\rm ran}}
    \newcommand{\lcom}{\textquotedblleft}
    \newcommand{\rcom}{\textquotedblright}
    \newcommand{\modulo}{\mbox{\rm\ mod\ }}
    \newcommand{\cj}[2]{ \left\{ {#1} \ / \ {#2} \right\} }
    \newcommand{\thzfc}{\mathrm{ZFC}}
    \newcommand{\Cwf}{\mathcal{C}}
    \newcommand{\Fwf}{\mathcal{F}}
    \newcommand{\Iwf}{\mathcal{I}}
    \newcommand{\Mwf}{\mathcal{M}}
    \newcommand{\Nwf}{\mathcal{N}}
    \newcommand{\Uwf}{\mathcal{U}}
    \newcommand{\Vwf}{\mathcal{V}}
    \newcommand{\afrak}{\mathfrak{a}}
    \newcommand{\bfrak}{\mathfrak{b}}
    \newcommand{\cfrak}{\mathfrak{c}}
    \newcommand{\dfrak}{\mathfrak{d}}
    \newcommand{\ffrak}{\mathfrak{f}}
    \newcommand{\pfrak}{\mathfrak{p}}
    \newcommand{\rfrak}{\mathfrak{r}}
    \newcommand{\sfrak}{\mathfrak{s}}
    \newcommand{\tfrak}{\mathfrak{t}}
    \newcommand{\ufrak}{\mathfrak{u}}
    \newcommand{\menos}{\smallsetminus}
    \newcommand{\frestr}{\!\!\upharpoonright\!\!}
    \newcommand{\add}{\mbox{\rm add}}
    \newcommand{\cov}{\mbox{\rm cov}}
    \newcommand{\non}{\mbox{\rm non}}
    \newcommand{\cof}{\mbox{\rm cof}}
    \newcommand{\Sl}{\mbox{\rm Sl}}
    \newcommand{\Aor}{\mathds{A}}
    \newcommand{\Bor}{\mathds{B}}
    \newcommand{\Cor}{\mathds{C}}
    \newcommand{\Dor}{\mathds{D}}
    \newcommand{\Eor}{\mathds{E}}
    \newcommand{\Lor}{\mathds{L}}
    \newcommand{\Mor}{\mathds{M}}
    \newcommand{\Por}{\mathds{P}}
    \newcommand{\Qor}{\mathds{Q}}
    \newcommand{\Sor}{\mathds{S}}
    \newcommand{\Qnm}{\dot{\mathds{Q}}}
    \newcommand{\Rnm}{\dot{\mathds{R}}}
    \newcommand{\Snm}{\dot{\mathds{S}}}
    \newcommand{\Tnm}{\dot{\mathds{T}}}
    \newcommand{\Anm}{\dot{\mathds{A}}}
    \newcommand{\Bnm}{\dot{\mathds{B}}}
    \newcommand{\Cnm}{\dot{\mathds{C}}}
    \newcommand{\Dnm}{\dot{\mathds{D}}}
    \newcommand{\Enm}{\dot{\mathds{E}}}
    \newcommand{\Ufnm}{\dot{\mathcal{U}}}
    \newcommand{\cf}{\mbox{\rm cf}}
    \newcommand{\disy}{{\ \mbox{\scriptsize $\vee$} \ }}
    \newcommand{\sii}{{\ \mbox{$\Leftrightarrow$} \ }}
\title{Models of some cardinal invariants with large continuum}
\author{Diego Alejandro Mej\'ia\thanks{Supported by the Monbukagakusho (Ministry of Education, Culture, Sports, Science and Technology) Scholarship, Japan.}
}
\date{\small Graduate School of System Informatics, Kobe University, Kobe, Japan.\\ \texttt{damejiag@kurt.scitec.kobe-u.ac.jp}}
\begin{document}

\makeatletter
\def\@roman#1{\romannumeral #1}
\makeatother

\theoremstyle{plain}
  \newtheorem{theorem}{Theorem}[section]
  \newtheorem{corollary}[theorem]{Corollary}
  \newtheorem{lemma}[theorem]{Lemma}
  \newtheorem{prop}[theorem]{Proposition}
  \newtheorem{clm}[theorem]{Claim}
  \newtheorem{exer}[theorem]{Exercise}
  \newtheorem{question}[theorem]{Question}
\theoremstyle{definition}
  \newtheorem{definition}[theorem]{Definition}
  \newtheorem{example}[theorem]{Example}
  \newtheorem{remark}[theorem]{Remark}
  \newtheorem{context}[theorem]{Context}
  \newtheorem*{acknowledgements}{Acknowledgements}

\maketitle

\begin{abstract}
   We extend the applications of the techniques used in \cite{mejia} to present various examples of consistency results where some cardinal invariants of the continuum take arbitrary regular values with the size of the continuum being bigger than $\aleph_2$.
\end{abstract}

\section{Introduction}\label{SecIntro}

We use the fsi (finite support iteration) techniques presented in \cite{brendle} and the matrix iterations technique introduced by Blass and Shelah in \cite{blsh} and implemented in \cite{BF} and \cite{mejia} to construct models where the continuum is large (that is, its size is bigger than $\aleph_2$) and where the cardinal invariants of the continuum mentioned in this section take arbitrary regular values.\\
We introduce the notation and the cardinal invariants that concern the contents of this text. Our notation is quite standard. $\Aor$ represents the amoeba algebra, $\Bor$ the random algebra, $\Cor$ the Cohen poset, $\Dor$ is Hechler forcing, $\Eor$ is the eventually different reals forcing and $\mathds{1}$ denotes the trivial poset $\{0\}$. Those posets are Suslin ccc forcing notions. See \cite{barju} for definitions and properties. Basic notation and knowledge about forcing can be found in \cite{kunen} and \cite{jech}.\\
Throughout this text, we refer as a \emph{real} to any member of a fixed Polish space (e.g. the Baire space $\omega^\omega$ or the Cantor space $2^\omega$). $\Mwf$ denotes the $\sigma$-ideal of meager sets of reals and $\Nwf$ is the $\sigma$-ideal of null sets of reals (from the context, it is clear which Polish space corresponds to such an ideal). For $\Iwf$ being $\Mwf$ or $\Nwf$, the following cardinal invariants are defined:
\begin{description}
   \item[$\add(\Iwf)$] the least size of a family $\Fwf\subseteq\Iwf$ whose union is not in $\Iwf$,
   \item[$\cov(\Iwf)$] the least size of a family $\Fwf\subseteq\Iwf$ whose union covers all the reals,
   \item[$\non(\Iwf)$] the least size of a set of reals not in $\Iwf$, and
   \item[$\cof(\Iwf)$] the least size of a cofinal subfamily of $\langle\Iwf,\subseteq\rangle$.
\end{description}
The value of each of these invariants does not depend on the space of reals used to define it.\\
We consider $\cfrak=2^{\aleph_0}$ (the size of the continuum) and the invariants $\bfrak$ and $\dfrak$ as given in Section \ref{SecPres}.
Thus, we have Cichon's diagram as in figure \ref{fig:1}.
\begin{figure}
\begin{center}
  \includegraphics[scale=0.52]{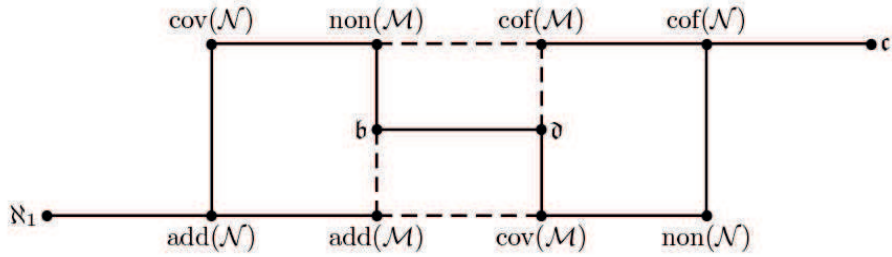}
\caption{Cichon's diagram}
\label{fig:1}
\end{center}
\end{figure}
In figure \ref{fig:1}, horizontal lines from left to right and vertical lines from down to up represent $\leq$. The dotted lines represent
$\add(\Mwf)=\min\{\bfrak,\cov(\Mwf)\}$ and $\cof(\Mwf)=\max\{\dfrak,\non(\Mwf)\}$. For basic definitions, notation and proofs regarding Cichon's
diagram, see \cite[Chapter 2]{barju}, \cite{blass} and \cite{bart}.\\
For $A$ and $B$ subsets of $\omega$, $A\subseteq^* B$ denotes that $A\menos B$ is finite. A family $\Fwf$ contained in $[\omega]^\omega$ is a \emph{filter base} if the intersection of any finite subfamily of $\Fwf$ is infinite and $\bar{\Fwf}:=\cj{X\in[\omega]^\omega}{\exists_{F\in[\Fwf]^{<\omega}}\bigcap F\subseteq^* X}$ denotes the filter that it generates.
$X\in[\omega]^\omega$ is said to be a \emph{pseudo-intersection of $\Fwf$} if $X\subseteq^* A$ for any $A\in\Fwf$. The cardinal invariant $\pfrak$, the \emph{pseudo-intersection number}, is defined as the least size of a filter base that does not have a pseudo-intersection, and the cardinal invariant $\ufrak$, the \emph{ultrafilter number}, is the least size of a filter base that generates a non-principal ultrafilter on $\omega$. The cardinal invariants $\sfrak$ and $\rfrak$ are defined in Section \ref{SecPres}. For a filter base $\Fwf$, $\Mor_{\Fwf}$ denotes \emph{Mathias forcing with $\Fwf$}, which is a $\sigma$-centered forcing notion that adds a pseudo-intersection of $\Fwf$. For definitions, properties and proofs, see \cite{barju} and \cite{blsh}.\\
It is clear that the mentioned cardinal invariants are between $\aleph_1$ and $\cfrak$. The following are the inequalities that are known to be true in $\thzfc$. Refer to \cite{blass} and \cite{barju} for the proofs.
\begin{theorem}
   \begin{enumerate}[(a)]
     \item $\pfrak\leq\add(\Mwf)$.
     \item $\pfrak\leq\sfrak$.
     \item $\sfrak\leq\dfrak$ and $\bfrak\leq\rfrak$.
     \item $\sfrak\leq\non(\Iwf)$ and $\cov(\Iwf)\leq\rfrak$, where $\Iwf$ is $\Mwf$ or $\Nwf$.
     \item $\rfrak\leq\ufrak$.
   \end{enumerate}
\end{theorem}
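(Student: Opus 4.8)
The plan is to recall that each of (a)--(e) is a well-known $\thzfc$ theorem (see \cite{blass} and \cite{barju}) and to indicate the standard argument for each; throughout I use the identity $\add(\Mwf)=\min\{\bfrak,\cov(\Mwf)\}$ recorded above. Given that identity, (a) reduces to the two classical bounds $\pfrak\le\bfrak$ and $\pfrak\le\cov(\Mwf)$: the first is obtained through $\pfrak\le\tfrak\le\bfrak$, and the second from Bell's theorem that Martin's axiom for $\sigma$-centered posets holds below $\pfrak$, applied to Cohen forcing $\Cor$ (which is $\sigma$-centered and whose Martin number equals $\cov(\Mwf)$). This is the only place where a non-elementary input is needed — Bell's theorem together with the Miller--Truss decomposition of $\add(\Mwf)$ — so I expect (a) to be the main obstacle; the remaining items are direct constructions that I would simply carry out.

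For (b), given $S\subseteq[\omega]^\omega$ with $\kappa:=|S|<\pfrak$, I would enumerate $S=\{A_\xi:\xi<\kappa\}$ and recursively choose $\epsilon(\xi)\in\{0,1\}$, keeping the invariant that $\{A_\eta^{\epsilon(\eta)}:\eta<\xi\}$ is a filter base, where $A^0:=A$ and $A^1:=\omega\menos A$: at stage $\xi$ this family has size $<\pfrak$, hence admits a pseudo-intersection $Y$, and choosing $\epsilon(\xi)$ so that $Y\cap A_\xi^{\epsilon(\xi)}$ is infinite preserves the invariant, since every new finite intersection contains $Y\cap A_\xi^{\epsilon(\xi)}$ up to a finite set. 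A pseudo-intersection $X$ of the final family $\{A_\xi^{\epsilon(\xi)}:\xi<\kappa\}$ is split by no $A_\xi$, so $S$ is not splitting and $\pfrak\le\sfrak$; alternatively, one forces such an $X$ with Mathias forcing $\Mor_{\Fwf}$ for a suitable $\Fwf$ and invokes Bell's theorem. Item (c) I would treat for both pairs at once by reading an interval partition off a fast function: for strictly increasing $f$ with $f(n)>n$ — writing $f^{(k)}$ for its $k$-fold iterate — put $I^f_k:=[f^{(k)}(0),f^{(k+1)}(0))$; the key observation is that if $f$ eventually dominates the gap function $g_X(n):=\min\{m\in X:m>n\}$ of some $X\in[\omega]^\omega$, then $X$ meets all but finitely many $I^f_k$, so $\bigcup_k I^f_{2k}$ splits $X$. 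Since a dominating family $D$ (without loss of generality of strictly increasing functions) dominates every such $g_X$, the family $\{\bigcup_k I^f_{2k}:f\in D\}$ is splitting, giving $\sfrak\le\dfrak$; dually, if $|R|<\bfrak$ then one $f$ bounds all gap functions $g_A$ ($A\in R$), and then the single set $\bigcup_k I^f_{2k}$ splits every $A\in R$, so $R$ is not reaping and $\bfrak\le\rfrak$.

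For (d) and (e), fix $A\in[\omega]^\omega$ and let $N_A:=\{Y\in 2^\omega:A\cap Y\text{ is finite}\}\cup\{Y\in 2^\omega:A\menos Y\text{ is finite}\}$ be the set of reals (coded as subsets of $\omega$) that do not split $A$. The only computation is that $N_A\in\Mwf\cap\Nwf$: each of the two displayed sets is a countable union of sets of the form $\{Y:Y\cap(A\menos k)=\vacio\}$, respectively $\{Y:(A\menos k)\subseteq Y\}$, and each such set is closed and nowhere dense (any basic clopen condition leaves some coordinate in the infinite set $A\menos k$ free) and of measure $2^{-|A\menos k|}=0$. Consequently, if $X\subseteq 2^\omega$ is not in $\Iwf$, then $X\not\subseteq N_A$ for every $A$, so the infinite members of $X$ form a splitting family of size at most $\non(\Iwf)$, giving $\sfrak\le\non(\Iwf)$; and if $R$ is a reaping family, then $2^\omega=\bigcup_{A\in R}N_A$ straight from the definition of reaping, giving $\cov(\Iwf)\le|R|=\rfrak$. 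Finally, for (e), a filter base $\Fwf$ generating a non-principal ultrafilter on $\omega$ — closed under finite intersections, which does not change $|\Fwf|$ since $\Fwf$ is infinite — is itself a reaping family, because for every $Y\subseteq\omega$ one of $Y,\omega\menos Y$ lies in the ultrafilter it generates and hence almost contains a member of $\Fwf$; therefore $\rfrak\le\ufrak$.
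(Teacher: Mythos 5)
The paper offers no proof of this theorem: it simply refers the reader to \cite{blass} and \cite{barju}, adding only the remark that (a) and (b) are usually stated for the tower number $\tfrak$ and then follow from $\pfrak\leq\tfrak$. So you are supplying an argument where the paper gives a citation, and your argument is correct.

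Two points of comparison are worth making. For (a) the paper's suggested route is to quote $\tfrak\leq\add(\Mwf)$ and combine it with $\pfrak\leq\tfrak$; you instead split $\add(\Mwf)=\min\{\bfrak,\cov(\Mwf)\}$ and handle the $\cov(\Mwf)$ half by Bell's theorem (identifying $\pfrak$ with the Martin number for $\sigma$-centered posets and using that the Martin number of Cohen forcing equals $\cov(\Mwf)$). That is a legitimately different and somewhat heavier route: Bell's theorem is a nontrivial result, whereas the direct construction of a $\subseteq^*$-decreasing tower whose limit codes a Cohen-generic filter gives $\tfrak\leq\cov(\Mwf)$ more cheaply. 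What your version buys is that it goes straight to $\pfrak$ without invoking $\tfrak$. For (b) you likewise argue with filter bases and $\pfrak$ directly, rather than with towers and $\tfrak$; the recursive invariant is maintained correctly (including at limit stages, where the filter-base property is automatic), so this is fine. Items (c)--(e) are the standard interval-partition argument and the $N_A$ argument, carried out correctly; one small streamlining in (d) is that you need not pass to ``infinite members of $X$'' at all, since $Y\notin N_A$ already forces both $A\cap Y$ and $A\setminus Y$ to be infinite, hence $Y$ infinite and co-infinite automatically.
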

In fact, (a) and (b) are proved for the \emph{tower number} $\tfrak$ in place of $\pfrak$, which is the least length of a well-ordered $\subseteq^*$-decreasing sequence of infinite subsets of $\omega$ which has no pseudo-intersection. It is well known that $\pfrak\leq\tfrak$ (from which (a) and (b) follows), but the problem whether $\pfrak=\tfrak$ is provable in $\thzfc$ was a long standing question recently answered positively by Malliaris and Shelah in \cite{mallsh}.\\
This paper is structured as follows. In Section \ref{SecPres} we present preservation results, in a very general context that covers many of the mentioned cardinal invariants, that allow to preserve some lower or upper bounds of these cardinals under forcing extensions. These results are fundamental to calculate the values of the invariants in the applications. In Section \ref{SecMatIt} we introduce the cases of matrix iterations we need for our applications and show how the preservation results of Section \ref{SecPres} are useful in these forcing constructions. These two sections summarize the theorical background presented in \cite{mejia} with the difference that we add Examples \ref{SubsecSplit} and \ref{SubsecFinSp}, prove a preservation result respect to Laver forcing with an ultrafilter (Lemma \ref{LaverStarPitchfork}) and extend the context of the construction of matrix iterations in Context \ref{ContextMatrix}.\\
Section \ref{SecAppl} contains the constructions of models where the mentioned cardinal invariants assume some arbitrary preassigned values, extending the same type of applications that are shown in \cite{mejia} to some other cardinal invariants that do not appear in Cichon's diagram.
In the last section, we mention some questions that, answered positively, imply interesting extensions of our examples.
\begin{acknowledgements}
  The author is very thankful with professor J. Brendle for all his guidance, constructive discussions and help with the final version of this text, especially for noting that $(+_{\Bor,\varpropto})$ does \underline{not} hold (see discussion after Example \ref{SubsecSplit}), teaching to the author Example \ref{SubsecFinSp} and its relation with $\Bor$ and $\Eor$ (Lemma \ref{PresPlusUnbd->PresRhd}) and for noting Lemma \ref{LaverStarPitchfork}(c). Thanks to these valuable inputs, Theorems \ref{dfrak,NonN,r-large}, \ref{dfrak-large}, \ref{MatrixCont-large}(b)(d)(f), \ref{MatrixCofN-large}(b)(d)(f) and \ref{MatrixNonN-large}(c) were possible to be included in the applications.\\
  The author is also grateful to professor T. Miyamoto for his invitation to such a great conference.
\end{acknowledgements}

\section{Preservation properties}\label{SecPres}

Throughout this section, let $\kappa$ be an uncountable regular cardinal and $\lambda\geq\kappa$ infinite cardinal. First, we present a general context that allows to describe some cardinal invariants and associate with it some preservation results. Second, we describe particular cases of this context that correspond to some of the cardinal invariants introduced in Section \ref{SecIntro} and list the preservation results that hold for each case. At the end, preservation results about unbounded reals are mentioned.
\begin{context}[\cite{gold}, {\cite[Section 6.4]{barju}}]\label{ContextUnbd}
 We fix an increasing sequence $\langle\sqsubset_n\rangle_{n<\omega}$ of 2-place relations in $\omega^\omega$ such that
\begin{itemize}
   \item each $\sqsubset_n$ ($n<\omega)$ is a closed relation (in the arithmetical sense) and
   \item for all $n<\omega$ and $g\in\omega^\omega$, $(\sqsubset_n)^g=\cj{f\in\omega^\omega}{f\sqsubset_n g}$ is (closed) n.w.d.
\end{itemize}
Put $\sqsubset=\bigcup_{n<\omega}\sqsubset_n$. Therefore, for every $g\in\omega^\omega$, $(\sqsubset)^g$ is an $F_\sigma$ meager set.\\
$F\subseteq\omega^\omega$ is a \emph{$\sqsubset$-unbounded family} if, for every $g\in\omega^\omega$, there exists an $f\in F$ such that
$f\not\sqsubset g$. We define the cardinal $\bfrak_\sqsubset$ as the least size of a $\sqsubset$-unbounded family. Besides,
$D\subseteq\omega^\omega$ is a \emph{$\sqsubset$-dominating family} if, for every $x\in\omega^\omega$, there exists an $f\in D$ such that
$x\sqsubset f$. Likewise, we define the cardinal $\dfrak_\sqsubset$ as the least size of a $\sqsubset$-dominating family.\\
Given a set $Y$, we say that a real $f\in\omega^\omega$ is \emph{$\sqsubset$-unbounded over $Y$} if $f\not\sqsubset g$ for every $g\in
Y\cap\omega^\omega$.
\end{context}
Although we define Context \ref{ContextUnbd} for $\omega^\omega$, we can use, in general, the same notion by changing the space for the domain or
the range of $\sqsubset$ to another uncountable Polish space, like $2^\omega$ or other spaces whose members can be coded by reals in
$\omega^\omega$.

\subsection{Preservation of $\sqsubset$-unbounded families}\label{SubsecPresUnbdFly}

\begin{definition}\label{DefTriangle}
  For a set $F\subseteq\omega^\omega$, the property
  $(\blacktriangle,\sqsubset,F,\kappa)$ holds if, for all $X\subseteq\omega^\omega$ such that $|X|<\kappa$,
             there exists an $f\in F$ which is $\sqsubset$-unbounded over $X$.
\end{definition}
This property implies directly that $F$ is a $\sqsubset$-unbounded family and that no set of size $<\kappa$ is $\sqsubset$-dominating, that is,
\begin{lemma}\label{TriangleImpl}
   $(\blacktriangle,\sqsubset,F,\kappa)$ implies that $\bfrak_\sqsubset\leq|F|$ and $\kappa\leq\dfrak_{\sqsubset}$.
\end{lemma}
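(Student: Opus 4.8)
The plan is to read off both inequalities directly from the definition of $(\blacktriangle,\sqsubset,F,\kappa)$ by plugging in carefully chosen witnessing sets $X$.

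For the first inequality, $\bfrak_\sqsubset\leq|F|$, I would show that $F$ itself is a $\sqsubset$-unbounded family; then $\bfrak_\sqsubset\leq|F|$ is immediate from the definition of $\bfrak_\sqsubset$ as the least size of such a family. To see that $F$ is $\sqsubset$-unbounded, fix an arbitrary $g\in\omega^\omega$ and apply $(\blacktriangle,\sqsubset,F,\kappa)$ to the singleton $X=\{g\}$, which has size $1<\kappa$ since $\kappa$ is uncountable. This yields an $f\in F$ that is $\sqsubset$-unbounded over $\{g\}$, i.e.\ $f\not\sqsubset g$. As $g$ was arbitrary, $F$ is $\sqsubset$-unbounded.

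For the second inequality, $\kappa\leq\dfrak_\sqsubset$, I would argue by contradiction: assume $\dfrak_\sqsubset<\kappa$, so there is a $\sqsubset$-dominating family $D\subseteq\omega^\omega$ with $|D|<\kappa$. Apply $(\blacktriangle,\sqsubset,F,\kappa)$ to $X=D$ to obtain some $f\in F\subseteq\omega^\omega$ that is $\sqsubset$-unbounded over $D$, i.e.\ $f\not\sqsubset g$ for every $g\in D\cap\omega^\omega=D$. But since $D$ is $\sqsubset$-dominating and $f\in\omega^\omega$, there must be some $g\in D$ with $f\sqsubset g$, a contradiction. Hence no family of size $<\kappa$ is $\sqsubset$-dominating, that is, $\kappa\leq\dfrak_\sqsubset$.

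There is no real obstacle here; the only points worth noting are that $\kappa$ being uncountable (indeed just $\kappa>1$ suffices for the first part, and $\kappa\geq\aleph_1$ is irrelevant beyond that) guarantees the singleton $\{g\}$ is an admissible choice of $X$, and that for the second part one uses $F\subseteq\omega^\omega$ so that the witness $f$ is itself a legitimate target for the dominating family $D$.
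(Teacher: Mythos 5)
Your proof is correct and follows essentially the same route as the paper's (one-line) justification: the paper remarks that the property directly implies $F$ is $\sqsubset$-unbounded and that no set of size $<\kappa$ is $\sqsubset$-dominating, which is exactly what you spell out by instantiating $X$ as a singleton and as a putative dominating family respectively.
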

\begin{definition}[{\cite[Def. 6.4.4.3]{barju}}]\label{DefPlusProp}
  For a forcing notion $\Por$, the property
  $(+^\kappa_{\Por,\sqsubset})$ holds if, for every $\Por$-name $\dot{h}$ of a real in $\omega^\omega$, there exists a set
        $Y\subseteq\omega^\omega$ such that $|Y|<\kappa$ and, for every $f\in\omega^\omega$, if $f$ is $\sqsubset$-unbounded over $Y$,
        then $\Vdash f\not\sqsubset\dot{h}$.
\end{definition}
When $\kappa=\aleph_1$, we just write $(+_{\Por,\sqsubset})$.\\
$(+^\kappa_{\Por,\sqsubset})$ is a standard property associated to the preservation of $\bfrak_\sqsubset\leq\kappa$ and the preservation of $\dfrak_\sqsubset$ large through forcing extensions of $\Por$.
\begin{lemma}[{\cite[Lemma 6.4.8]{barju}},{\cite[Lemma 3]{mejia}}]\label{PreserTriangle}
   Assume $(+^\kappa_{\Por,\sqsubset})$. Then, the statements $(\blacktriangle,\sqsubset,F,\kappa)$ and
   \lcom$\dfrak_\sqsubset\geq\lambda$\rcom\ are preserved in generic extensions of $\Por$.
\end{lemma}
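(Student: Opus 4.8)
The plan is to prove Lemma \ref{PreserTriangle} by a standard reflection argument, handling each of the two preserved statements separately but by the same mechanism. The key ingredient is the property $(+^\kappa_{\Por,\sqsubset})$, which lets us replace a $\Por$-name for a real by a ground-model set of fewer than $\kappa$ reals that ``witnesses boundedness'' in place of the name.

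First I would treat the preservation of $(\blacktriangle,\sqsubset,F,\kappa)$. Let $G$ be $\Por$-generic over $V$ and work in $V[G]$. Take any $X\subseteq\omega^\omega$ with $|X|<\kappa$; we must find $f\in F$ that is $\sqsubset$-unbounded over $X$. Since $\kappa$ is regular and uncountable (and $\Por$ need not be proper in general, but in the intended applications $\Por$ is ccc, so $\kappa$ is still regular in $V[G]$ — in any case one assumes $\kappa$ remains regular, as is standard in this context), enumerate $X=\{\dot h_\alpha[G]:\alpha<\mu\}$ for some $\mu<\kappa$, where each $\dot h_\alpha$ is a $\Por$-name for a real. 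Apply $(+^\kappa_{\Por,\sqsubset})$ to each $\dot h_\alpha$ to obtain $Y_\alpha\subseteq\omega^\omega$ in $V$ with $|Y_\alpha|<\kappa$ such that every real $\sqsubset$-unbounded over $Y_\alpha$ is forced to be $\not\sqsubset\dot h_\alpha$. Set $Y=\bigcup_{\alpha<\mu}Y_\alpha$, a set in $V$ of size $<\kappa$ by regularity of $\kappa$. By $(\blacktriangle,\sqsubset,F,\kappa)$ holding in $V$, there is $f\in F$ that is $\sqsubset$-unbounded over $Y$. Then for each $\alpha<\mu$, $f$ is $\sqsubset$-unbounded over $Y_\alpha$, so $\Vdash f\not\sqsubset\dot h_\alpha$; evaluating at $G$ gives $f\not\sqsubset\dot h_\alpha[G]$ for all $\alpha$, i.e.\ $f$ is $\sqsubset$-unbounded over $X$. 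Since $F\subseteq\omega^\omega$ is unchanged, this $f$ lies in $F$ as computed in $V[G]$.

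Next, the preservation of ``$\dfrak_\sqsubset\geq\lambda$.'' Suppose toward a contradiction that in $V[G]$ there is a $\sqsubset$-dominating family $D$ with $|D|<\lambda$. If $\lambda\le\kappa$ this is immediate from the first part together with Lemma \ref{TriangleImpl}, since $(\blacktriangle,\sqsubset,F,\kappa)$ survives and gives $\kappa\le\dfrak_\sqsubset$. In general one argues as follows: each $g\in D$ is $\dot g[G]$ for a $\Por$-name $\dot g$, and one picks names for a dominating family of size $<\lambda$. The point is that $(+^\kappa_{\Por,\sqsubset})$ replaces each name by a small ground-model set, but here the cardinality bookkeeping differs — rather, the cleanest route is: if $D$ were dominating and small, then in $V[G]$ no $f$ could be $\sqsubset$-unbounded over any set containing a suitable cofinal piece of $D$, contradicting that $(\blacktriangle,\sqsubset,F,\kappa)$ still holds whenever $\kappa\le\lambda$ forces $|F|$-many unbounded reals to persist. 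To be precise, I would reduce to the first part: ``$\dfrak_\sqsubset\ge\lambda$'' in $V$ means that for every $Z\subseteq\omega^\omega$ with $|Z|<\lambda$ there is a real $\sqsubset$-unbounded over $Z$; replaying the argument of the previous paragraph with an arbitrary small set of names (of size $<\lambda$, not $<\kappa$) and applying $(+^\kappa_{\Por,\sqsubset})$ to each — using $|Y|<\lambda$ since $\lambda\ge\kappa$ and $\lambda$ is (assumed) regular, or at least that a union of $<\lambda$ sets each of size $<\kappa\le\lambda$ has size $<\lambda$ — produces a ground-model $Y$ with $|Y|<\lambda$, hence a real $\sqsubset$-unbounded over $Y$, hence over the corresponding set in $V[G]$.

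**The main obstacle** I anticipate is the cardinal arithmetic in the second part: one needs that a union of fewer than $\lambda$ sets, each of size $<\kappa$, still has size $<\lambda$, which requires either $\lambda$ regular or $\mathrm{cf}(\lambda)\ge\kappa$ — and indeed in the cited context $\lambda$ is taken with $\lambda\ge\kappa$ and the relevant instances have $\lambda$ regular, so this is safe, but it should be stated. A secondary, more bookkeeping-level point is ensuring $\kappa$ (and $\lambda$) retain the needed regularity/cofinality in $V[G]$; this is automatic when $\Por$ is ccc, which is the operative assumption throughout the matrix-iteration applications. Modulo these remarks the proof is the routine reflection template, so I would keep it brief and refer to \cite[Lemma 6.4.8]{barju} and \cite[Lemma 3]{mejia} for the details already in the literature.
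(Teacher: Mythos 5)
Your argument is correct and is essentially the standard reflection proof underlying \cite[Lemma 6.4.8]{barju} and \cite[Lemma 3]{mejia}: pick names for the small set in the extension, apply $(+^\kappa_{\Por,\sqsubset})$ to each name to replace it by a ground-model witness set of size $<\kappa$, take the union, and reflect to the ground-model property. One small remark: the cardinal-arithmetic ``obstacle'' you flag in the second part is in fact a non-issue and needs no hypothesis on $\lambda$ beyond $\lambda\geq\kappa$ — a union of $\mu<\lambda$ sets each of size $<\kappa$ has size $<\kappa\leq\lambda$ when $\mu<\kappa$ (by regularity of $\kappa$), and size $\leq\mu\cdot\kappa=\mu<\lambda$ when $\kappa\leq\mu<\lambda$, so $|Y|<\lambda$ holds automatically.
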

The property $(+^\kappa_{\Por,\sqsubset})$ is preserved through fsi.
\begin{theorem}[Judah and Shelah, \cite{jushe}, {\cite[Thm. 6.4.12.2]{barju}}, \cite{brendle}]\label{preservPlus}
   Let $\kappa$ be an uncountable cardinal, $\Por_\delta=\langle\Por_\alpha,\Qnm_\alpha\rangle_{\alpha<\delta}$ a fsi of $\kappa$-cc forcing.
   If $\forall_{\alpha<\delta}\big(\Vdash_{\Por_\alpha}(+^\kappa_{\Qnm_\alpha,\sqsubset})\big)$, then $(+^\kappa_{\Por_\delta,\sqsubset})$.
\end{theorem}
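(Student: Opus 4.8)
The plan is to prove the statement by induction on the length $\delta$ of the iteration, with the induction hypothesis asserting the theorem for every shorter iteration of $\kappa$-cc forcings satisfying the displayed hypothesis (over an arbitrary ground model). The natural split is: $\delta=0$ (trivial); $\delta=\gamma+1$ a successor; $\delta$ a limit with $\cf\delta\geq\kappa$; and $\delta$ a limit with $\cf\delta<\kappa$, the last being the substantial case.

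\emph{Successor step.} Write $\Por_{\gamma+1}=\Por_\gamma*\Qnm_\gamma$ and fix a $\Por_{\gamma+1}$-name $\dot h$ for a real in $\omega^\omega$, regarded as a $\Por_\gamma$-name for a $\Qnm_\gamma$-name for a real. In $V^{\Por_\gamma}$, the hypothesis $\Vdash_{\Por_\gamma}(+^\kappa_{\Qnm_\gamma,\sqsubset})$ yields a set $\dot Y_0\subseteq\omega^\omega$ of size $<\kappa$ such that every $f$ which is $\sqsubset$-unbounded over $\dot Y_0$ satisfies $\Vdash_{\Qnm_\gamma}f\not\sqsubset\dot h$. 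Using $\kappa$-cc, fix $\theta<\kappa$ with $\Vdash_{\Por_\gamma}|\dot Y_0|\leq\check\theta$ and write $\dot Y_0=\{\dot y_i:i<\theta\}$ with each $\dot y_i$ a $\Por_\gamma$-name for a real. Applying the induction hypothesis $(+^\kappa_{\Por_\gamma,\sqsubset})$ to each $\dot y_i$ produces $Y_i\subseteq\omega^\omega$ with $|Y_i|<\kappa$ such that $f$ being $\sqsubset$-unbounded over $Y_i$ implies $\Vdash_{\Por_\gamma}f\not\sqsubset\dot y_i$. Put $Y:=\bigcup_{i<\theta}Y_i$; since $\kappa$ is regular, $|Y|<\kappa$. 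If $f$ is $\sqsubset$-unbounded over $Y$ then it is $\sqsubset$-unbounded over each $Y_i$, hence $\Vdash_{\Por_\gamma}f\not\sqsubset\dot y_i$ for all $i<\theta$, hence $\Vdash_{\Por_\gamma}$ ``$f$ is $\sqsubset$-unbounded over $\dot Y_0$'', hence $\Vdash_{\Por_\gamma}(\Vdash_{\Qnm_\gamma}f\not\sqsubset\dot h)$, i.e.\ $\Vdash_{\Por_{\gamma+1}}f\not\sqsubset\dot h$; so $Y$ witnesses $(+^\kappa_{\Por_{\gamma+1},\sqsubset})$.

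\emph{Limit step, $\cf\delta\geq\kappa$.} Since $\Por_\delta$ is $\kappa$-cc and the iteration has finite support, any $\Por_\delta$-name $\dot h$ for a real is determined by countably many maximal antichains, each of size $<\kappa$, of conditions with finite support; as $\kappa$ is regular and uncountable this gives $\dot h$ support of size $<\kappa$, which is bounded below some $\alpha<\delta$ because $\cf\delta\geq\kappa$. So $\dot h$ is (equivalent to) a $\Por_\alpha$-name, and a witness $Y$ for $(+^\kappa_{\Por_\alpha,\sqsubset})$ applied to $\dot h$ also works for $\Por_\delta$: ``$f\not\sqsubset\dot h$'' is an arithmetic, hence absolute, statement about $f$ and the value of $\dot h$, and $\dot h$ is evaluated identically by a $\Por_\delta$-generic filter and by its restriction to $\Por_\alpha$.

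\emph{Limit step, $\cf\delta<\kappa$.} When $\delta>\cf\delta$, first regroup: along an increasing continuous cofinal sequence $\langle\delta_\xi:\xi<\cf\delta\rangle$, $\Por_\delta$ is forcing-equivalent to the finite support iteration of the tails $\Por_{\delta_{\xi+1}}/\Por_{\delta_\xi}$, each $\kappa$-cc and satisfying the required $(+)$-property by the induction hypothesis applied to the corresponding tail (which has length $<\delta$ since $\delta_{\xi+1}<\delta$); this regrouped iteration has length $\cf\delta<\delta$, so the induction hypothesis applies and yields $(+^\kappa_{\Por_\delta,\sqsubset})$. This reduces matters to $\delta=\cf\delta$, i.e.\ $\delta=\omega$ or $\delta$ a regular uncountable cardinal below $\kappa$. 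For $\delta=\omega$ --- the genuinely new content --- I would fix a $\Por_\omega$-name $\dot h$ for a real and, for each $n$, construct a $\Por_n$-name $\dot h_n$ for a real that interprets $\dot h$ at stage $n$: read off, along an increasing chain of conditions whose finite supports exhaust an initial segment of $\omega$, longer and longer initial segments of $\dot h$, using that each $\sqsubset_k$ is closed so that these finite pieces remain on the tree of whichever set $(\sqsubset_k)^f$ is relevant. Applying $(+^\kappa_{\Por_n,\sqsubset})$ to each $\dot h_n$ gives $Y_n$ with $|Y_n|<\kappa$, and $Y:=\bigcup_{n<\omega}Y_n$ has size $<\kappa$. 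The crux is showing $Y$ works: given $f$ that is $\sqsubset$-unbounded over $Y$ and a hypothetical $p\in\Por_\omega$ forcing $f\sqsubset_{k_0}\dot h$ for some $k_0$, use that $p$ has finite support, hence $p\in\Por_n$ for some $n$, together with the closedness of $\sqsubset_{k_0}$, to extract in $V^{\Por_n}$ a real $h^*$ with $f\sqsubset_{k_0}h^*$ that is a legitimate stage-$n$ interpretation of $\dot h$, i.e.\ a value consistent with $\dot h_n$, contradicting $\Vdash_{\Por_n}f\not\sqsubset\dot h_n$. I expect this interpolation --- coordinating the stage-$n$ interpretations with the witnesses $Y_n$ and pushing a forced instance of ``$f\sqsubset\dot h$'' down to a finite stage --- to be the main obstacle; the subcase $\delta$ regular uncountable $<\kappa$ is then handled by the same argument along a cofinal sequence of length $\delta$ (or, when the name's support happens to be bounded in $\delta$, by reflection).
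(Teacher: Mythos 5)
The paper does not prove Theorem \ref{preservPlus}; it is stated and attributed to Judah--Shelah, Bartoszynski--Judah (Thm.\ 6.4.12.2), and Brendle, so there is no proof of the authors' own to compare against. Judged on its own terms, your outline reproduces the standard argument from those sources: induction on $\delta$, a two-step composition for successors, a support-reflection argument for $\cf\delta\geq\kappa$, and a regrouping (or, equivalently, working along a $\cf\delta$-sequence) to reduce the $\cf\delta<\kappa$ limit to $\delta=\cf\delta$, with the genuinely new content at $\delta=\omega$ and at regular $\delta<\kappa$. The successor step and the $\cf\delta\geq\kappa$ step are correct, and the regrouping reduction is sound (the tails $\Por_{\delta_{\xi+1}}/\Por_{\delta_\xi}$ remain $\kappa$-cc and inherit the componentwise $(+^\kappa)$-hypothesis, so the relativized induction hypothesis applies both to the tails and to the regrouped iteration of length $\cf\delta<\delta$).

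The only place that wants tightening is the crux of the $\delta=\omega$ case. You phrase it as ``extract a real $h^*$ with $f\sqsubset_{k_0}h^*$ that is a legitimate stage-$n$ interpretation of $\dot h$, i.e.\ a value consistent with $\dot h_n$,'' which leaves it unclear whether $h^*$ must match the \emph{particular} name $\dot h_n$ you fixed beforehand. The clean statement is that any canonical interpretation $\dot h_n$ already works: if $p\in\Por_n$ and $p\Vdash_{\Por_\omega}f\sqsubset_{k_0}\dot h$, then for every $\Por_n$-generic $G_n$ with $p\in G_n$ the whole quotient $\Por_\omega/G_n$ forces $f\sqsubset_{k_0}\dot h$ over $V[G_n]$, so every initial segment $\sigma_i$ of $\dot h$ decided along the defining chain of $\dot h_n$ lies on the tree of $(\sqsubset_{k_0})^f$; by closedness of $\sqsubset_{k_0}$ the branch $\dot h_n[G_n]=\bigcup_i\sigma_i$ then satisfies $f\sqsubset_{k_0}\dot h_n[G_n]$. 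Thus $p\Vdash_{\Por_n}f\sqsubset\dot h_n$, contradicting that $f$ is $\sqsubset$-unbounded over $Y_n$. With this phrasing the point is exactly as you intend, and the same calculation (replacing $n<\omega$ by $\alpha<\delta$ and using regularity of $\kappa$ to bound $|\bigcup_{\alpha<\delta}Y_\alpha|<\kappa$) handles regular uncountable $\delta<\kappa$, which you currently dispatch with one clause. Modulo spelling this out, the proof is correct.
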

Notice that, if $\Por$ and $\Qor$ are posets such that $\Por$ is completely embedded in $\Qor$, then
$(+^{\kappa}_{\Qor,\sqsubset})$ implies $(+^{\kappa}_{\Por,\sqsubset})$.

\subsection{Particular cases}\label{SubsecPartCase}

Before presenting the particular cases of $\bfrak_\sqsubset$ and $\dfrak_\sqsubset$ of our interest, we claim that the property $(+^\kappa_{\Por,\sqsubset})$ holds for small forcing notions.
\begin{lemma}[{\cite[Lemma 4]{mejia}}]\label{smallPlus}
   If $\Por$ is a poset and $|\Por|<\kappa$, then $(+^\kappa_{\Por,\sqsubset})$. In particular, $(+_{\Cor,\sqsubset})$ always holds.
\end{lemma}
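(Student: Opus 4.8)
\emph{Proof plan.} The plan is to extract from the name $\dot h$ a single real for each condition of $\Por$ and let $Y$ be the collection of these reals, which then automatically has size $\le|\Por|<\kappa$. Fix a $\Por$-name $\dot h$ for a real in $\omega^\omega$. For $k<\omega$ let $D_k$ be the set of conditions deciding $\dot h\restriction k$; this is dense since $\Vdash$``$\dot h\in\omega^\omega$''. Working in the ground model, for each $p\in\Por$ I would use the Rasiowa--Sikorski lemma to fix a filter $G_p\ni p$ meeting every $D_k$, and set $g_p:=\dot h^{G_p}$; because $G_p$ is downward directed and meets all the $D_k$, $g_p$ is a well-defined total element of $\omega^\omega$, with $g_p\restriction k$ equal to the value of $\dot h\restriction k$ decided by any member of $G_p\cap D_k$. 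Put $Y:=\cj{g_p}{p\in\Por}$, so $|Y|\le|\Por|<\kappa$.

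The next step is to verify that this $Y$ witnesses $(+^\kappa_{\Por,\sqsubset})$. Fix $f\in\omega^\omega$ that is $\sqsubset$-unbounded over $Y$; I must show $\Vdash f\not\sqsubset\dot h$. Since $\sqsubset=\bigcup_{n<\omega}\sqsubset_n$ and ``$\Vdash$'' commutes with a universal ground-model number quantifier, $\Vdash f\not\sqsubset\dot h$ is equivalent to: for every $n<\omega$, $\Vdash f\not\sqsubset_n\dot h$. So it is enough to check that, for each $n$, no condition forces $f\sqsubset_n\dot h$ --- for then the conditions forcing $f\not\sqsubset_n\dot h$ form a dense set, whence $\Vdash f\not\sqsubset_n\dot h$. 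Assume toward a contradiction that $p\Vdash f\sqsubset_n\dot h$. As $\sqsubset_n$ is closed in the arithmetical sense, fix a tree $T_n$ on $\omega\times\omega$ with $\sqsubset_n=[T_n]$, so that $p$ forces $(f\restriction k,\dot h\restriction k)\in\check{T_n}$ for every $k<\omega$. Given $k$, pick $q\in G_p\cap D_k$ with $q\le p$ (possible since $G_p$ is a filter containing $p$); then $q$ forces both $\dot h\restriction k=\check{(g_p\restriction k)}$ and $(f\restriction k,\dot h\restriction k)\in\check{T_n}$, hence $q$ forces the checked ground-model statement ``$(f\restriction k,g_p\restriction k)\in\check{T_n}$'', so that statement is true by absoluteness. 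As $k$ was arbitrary, $f\sqsubset_n g_p$, i.e.\ $f\sqsubset g_p\in Y$, contradicting that $f$ is $\sqsubset$-unbounded over $Y$. This establishes $(+^\kappa_{\Por,\sqsubset})$.

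Finally, $\Cor$ is countable, so $|\Cor|=\aleph_0<\aleph_1$, and the lemma applied with $\kappa=\aleph_1$ immediately gives $(+_{\Cor,\sqsubset})$.

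I do not anticipate a genuine obstacle here: everything reduces to routine forcing bookkeeping --- checking that $g_p$ is total, the equivalences ``$\Vdash\varphi$ iff no condition forces $\neg\varphi$'' and ``$\Vdash\forall n\,\varphi_n$ iff $\Vdash\varphi_n$ for all $n$'', and the absoluteness of membership in the ground-model tree $T_n$. The one point worth flagging is that it is the Rasiowa--Sikorski lemma \emph{in the ground model} --- applicable because only the $\omega$ many dense sets $D_k$ need to be met --- that makes $|\Por|<\kappa$ (and not merely countability of $\Por$) sufficient.
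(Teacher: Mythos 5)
Your proof is correct and is, in essence, the standard argument for this lemma (and, as far as I can tell, the one in~\cite[Lemma~4]{mejia}): for each $p\in\Por$ interpret $\dot h$ along a ground-model filter through $p$ meeting the countably many decision sets, collect those $<\kappa$ interpretations into $Y$, and use the tree presentation of the closed relation $\sqsubset_n$ together with absoluteness to rule out any $p\Vdash f\sqsubset_n\dot h$ when $f$ is $\sqsubset$-unbounded over $Y$. No gaps; the bookkeeping you flag (totality of $g_p$, $\Vdash$ commuting with $\forall n<\omega$, absoluteness of membership in $\check{T}_n$) is exactly what needs checking and you check it.
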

\begin{example}[Preserving non-meager sets]\label{SubsecNonMeag}
  For $f,g\in\omega^\omega$, define $f\eqcirc_n g\sii\forall_{k\geq n}(f(k)\neq g(k))$, so $f\eqcirc g\sii\forall^\infty_{k\in\omega}(f(k)\neq
  g(k))$. From the characterization of covering and uniformity of category (see \cite[Thm. 2.4.1 and 2.4.7]{barju}), it follows that
  $\bfrak_{\eqcirc}=\non(\Mwf)$ and $\dfrak_{\eqcirc}=\cov(\Mwf)$.
\end{example}
\begin{example}[Preserving unbounded families]\label{SubsecUnbd}
   For $f,g\in\omega^\omega$, define $f<^*_n g\sii\forall_{k\geq n}(f(k)<g(k))$, so $f<^*g\sii\forall^\infty_{k\in\omega}(f(k)<g(k))$. Clearly,
   $\bfrak_{<^*}=\bfrak$ and $\dfrak_{<^*}=\dfrak$. $(+_{\Bor,<^*})$ holds because $\Bor$ is $\omega^\omega$-bounding, also
\end{example}
\begin{lemma}[Miller, \cite{miller}]\label{EvDiffPlus}
    $(+_{\Eor,<^*})$ holds.
\end{lemma}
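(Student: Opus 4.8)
The plan is to establish $(+_{\Eor,<^*})$ directly from the combinatorics of $\Eor$, in the style of Miller, rather than by reducing to an $\omega^\omega$-bounding forcing: indeed $\Eor$ adds an unbounded real (its generic real $\dot e$ is already unbounded over the ground model), so the quick argument available for $\Bor$ does not apply, nor is there a clean factorization through $\Cor$ that would let one invoke Lemma~\ref{smallPlus} and Theorem~\ref{preservPlus}. Recall that a condition of $\Eor$ is a pair $(s,F)$ with $s\in\omega^{<\omega}$ and $F\in[\omega^\omega]^{<\omega}$, ordered by $(t,F')\leq(s,F)$ iff $t\supseteq s$, $F'\supseteq F$ and $t(j)\neq g(j)$ for all $j\in[\,|s|,|t|)$ and $g\in F$; the generic real is $\dot e=\bigcup\{s:(\exists F)\,(s,F)\in G\}$, and since the generic filter is in turn recoverable from $\dot e$, every $\Eor$-name for a real is forced to be a Borel function of $\dot e$. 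Since $\Eor=\bigcup_{s\in\omega^{<\omega}}\Eor_s$ with $\Eor_s=\{(s,F):F\in[\omega^\omega]^{<\omega}\}$ directed (any finitely many conditions with stem $s$ have the lower bound $(s,\bigcup F_i)\in\Eor_s$), $\Eor$ is $\sigma$-centered, hence ccc. Fix an $\Eor$-name $\dot h$ for a real in $\omega^\omega$.

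First I would switch to the ``dynamical'' form of the conclusion: once a candidate $Y$ is chosen, in order to show that every $f\in\omega^\omega$ which is $<^*$-unbounded over $Y$ satisfies $\Vdash f\not<^*\dot h$, it suffices to prove that for each $p\in\Eor$ and each $N\in\omega$ there exist $q\leq p$ and $k\geq N$ with $q\Vdash\dot h(k)\leq f(k)$; and it is enough to do this for $p$ ranging over a dense set. So I would first pass from $p$ to an extension $q^{*}=(t^{*},F^{*})\leq p$ below which $\dot h$ is read continuously from $\dot e$ (continuous reading of names is available for Suslin ccc forcings such as $\Eor$; see \cite[Ch.~6]{barju}). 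Then I would run a compactness step: writing $n^{*}=|F^{*}|$, inside the tree of legal stem-extensions of $q^{*}$ consider the subtree $T$ of those all of whose new entries lie in $\{0,\dots,n^{*}\}$. At each new coordinate at most $n^{*}$ values are forbidden, so $T$ is a finitely branching, infinite tree, and each of its branches is a legal value of $\dot e$ below $q^{*}$; by K\"onig's lemma together with the continuity of the reading, for each $k$ there is a level $\ell_k$ such that every node of $T$ of length $\ell_k$ already decides $\dot h(k)$, and the finitely many values so decided are bounded by some $\beta_{q^{*}}(k)\in\omega$.

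The construction of $Y$ is the crux. The function $\beta_{q^{*}}$ would finish the argument if it depended on $\dot h$ alone: if $f(k)\geq\beta_{q^{*}}(k)$ for some $k\geq N$, then extending the stem of $q^{*}$ along $T$ up to length $\ell_k$ yields $q\leq q^{*}\leq p$ with $q\Vdash\dot h(k)\leq\beta_{q^{*}}(k)\leq f(k)$, as required, and ``$f$ is $<^*$-unbounded over $Y$'' should supply such a $k$. The obstacle is that $\beta_{q^{*}}$ depends on the whole condition $q^{*}$ -- in particular on $F^{*}$, of which there are continuum many -- while $Y$ must be countable. The remedy (this is the heart of Miller's argument) is to bound $\beta_{q^{*}}$ uniformly in terms of the \emph{stem} $t^{*}$ and of an integer $n$ meant to dominate $|F^{*}|$: one checks that the values $\dot h(k)$ can be driven to, below any condition with stem $t^{*}$ and side-condition of size $\leq n$, using only stem-entries $\leq n$, are bounded by a number $y_{t^{*},n}(k)$ depending on $\dot h$, $t^{*}$, $n$ and $k$ only -- essentially because, once the stem and the width $n$ are fixed, only finitely many ``small'' extensions of each relevant length remain to be inspected and the continuous reading forces $\dot h(k)$ to stabilize at a level bounded in terms of this data. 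One then puts $Y=\{\,y_{s,n}:s\in\omega^{<\omega},\ n\in\omega\,\}$, which is countable, and $\beta_{q^{*}}\leq^{*}y_{t^{*},n^{*}}$.

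Finally, assembling the pieces: if $f$ is $<^*$-unbounded over $Y$ then $f(k)\geq y_{t^{*},n^{*}}(k)\geq\beta_{q^{*}}(k)$ for infinitely many $k$, so choosing such a $k\geq N$ and extending along $T$ as above produces $q\leq p$ with $q\Vdash\dot h(k)\leq f(k)$; by the dynamical reformulation this gives $\Vdash f\not<^*\dot h$, i.e.\ $(+_{\Eor,<^*})$. I expect the main obstacle to be exactly the uniformity step of the previous paragraph -- keeping $Y$ countable while the condition being processed may carry an arbitrary finite family of real side-conditions -- together with a careful statement of continuous reading of names adapted to the small-valued stem extensions used here; the remaining ingredients (writing out the order of $\Eor$, the density reductions, and the bookkeeping) are routine.
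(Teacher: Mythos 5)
The paper itself does not prove this lemma; it only cites Miller \cite{miller}, so there is no in-paper argument to compare against. Judged on its own, your sketch has the right overall shape — a countable $Y$ indexed by stems $s$ and a bound $n$ on the size of the side condition, together with a K\"onig-type compactness step over stem-extensions whose new entries are at most $n$ — but it leaves the actual content of the lemma unproved. The crucial claim is that there is a single $y_{s,n}\in\omega^\omega$ such that for \emph{every} $F$ with $|F|\leq n$ one can extend $(s,F)$ to decide $\dot h(k)$ at a value $\leq y_{s,n}(k)$. You flag this yourself as \lcom the heart of Miller's argument\rcom\ and \lcom the main obstacle,\rcom\ but you do not prove it, and it is not a routine check: nothing in your setup rules out that, as $F$ ranges over $[\omega^\omega]^{\leq n}$, the minimal decidable value of $\dot h(k)$ below $(s,F)$ is unbounded. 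A compactness argument over $F$ does not obviously apply since $\omega^\omega$ is not compact, and a fusion over the witnessing $F_m$'s does not obviously yield a single condition of size $\leq n$.

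There is a second, related gap in the K\"onig step. Branches of the finitely branching tree $T$ you build (new entries $\leq n^*$, avoiding $F^*$) are reals bounded by $n^*$ past $|t^*|$, hence are \emph{not} $\Eor$-generic, so $\dot h(k)$ need not be decided along them. You bridge this by first passing to a $q^*$ below which $\dot h$ is read continuously from $\dot e$. That is available for Suslin ccc forcings, but both the continuous reading and its modulus of continuity $\ell_k$ are obtained \emph{relative to the specific condition} $q^*=(t^*,F^*)$; they depend on $F^*$, which is precisely the dependence you are trying to eliminate. So as written the argument proves $\beta_{q^*}\in\omega^\omega$ for each fixed $q^*$, but the uniformization $\beta_{q^*}\leq^* y_{t^*,|F^*|}$, which is what makes $Y$ countable, remains asserted rather than established. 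To turn this into a proof you would need a genuinely combinatorial lemma tying the decidable values of $\dot h(k)$ to $s$, $n$ and $k$ alone (for instance via a rank argument on stem-extensions, or via an elementary-submodel/Suslin-absoluteness argument taking $Y=\omega^\omega\cap M$ for a countable elementary $M\ni\dot h$), and this is exactly the nontrivial content that the citation to Miller is carrying.
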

\begin{example}[Preserving null-covering families]\label{SubsecCovNull}
   Fix, from now on, $\langle
   I_n\rangle_{n<\omega}$ an interval partition of $\omega$ (see Example \ref{SubsecFinSp} for a definition of this) such that $\forall_{n<\omega}(|I_n|=2^{n+1})$ . For $f,g\in2^\omega$ define
   $f\pitchfork_ng\sii\forall_{k\geq n}(f\frestr I_k\neq g\frestr I_k)$, so $f\pitchfork g\sii \forall^\infty_{k<\omega}(f\frestr I_k\neq g\frestr
   I_k)$. Clearly, $(\pitchfork)^g$ is a co-null $F_\sigma$ meager set.
\end{example}
\begin{lemma}[{\cite[Lemma $1^*$]{brendle}}]\label{centeredFork}
   Given $\mu<\kappa$ infinite cardinal, every $\mu$-centered forcing notion satisfies $(+^\kappa_{\cdot,\pitchfork})$.
\end{lemma}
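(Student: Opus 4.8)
The plan is to use $\mu$-centeredness to attach to each centered piece of $\Por$ a single real of $2^\omega$ that the name $\dot h$ cannot ``avoid'' on any block, and take $Y$ to be the collection of these reals. So fix a $\mu$-centered forcing notion $\Por$, write $\Por=\bigcup_{i<\mu}P_i$ with each $P_i$ centered (every finite subset of $P_i$ has a common lower bound in $\Por$), and let $\dot h$ be a $\Por$-name for an element of $2^\omega$. For each $i<\mu$ and $k<\omega$ I would let $g_i\frestr I_k$ be the lexicographically least $t\in 2^{I_k}$ such that no condition of $P_i$ forces $\dot h\frestr I_k\neq t$; equivalently (by the forcing theorem), such that for every $q\in P_i$ there is $q'\leq q$ with $q'\Vdash\dot h\frestr I_k=t$. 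The first point to verify is that such a $t$ exists: otherwise, for each of the finitely many $t\in 2^{I_k}$ pick $q_t\in P_i$ with $q_t\Vdash\dot h\frestr I_k\neq t$, take a common lower bound $q$ of $\{q_t:t\in 2^{I_k}\}$ in $\Por$ (this is exactly where centeredness is used), and note that $q$ would force $\dot h\frestr I_k$ to differ from every element of $2^{I_k}$, which is absurd. This produces a well-defined $g_i\in 2^\omega$ for each $i<\mu$; set $Y:=\{g_i:i<\mu\}$, so that $|Y|\leq\mu<\kappa$.

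Next I would check that $Y$ works, i.e.\ that any $f\in 2^\omega$ which is $\pitchfork$-unbounded over $Y$ satisfies $\Vdash f\not\pitchfork\dot h$. Assume toward a contradiction that some $p\in\Por$ forces $f\pitchfork\dot h$, i.e.\ $p\Vdash\exists N\,\forall k\geq N\,(f\frestr I_k\neq\dot h\frestr I_k)$. Then some $q\leq p$ decides the threshold: $q\Vdash\forall k\geq N\,(f\frestr I_k\neq\dot h\frestr I_k)$ for a fixed $N<\omega$. Fix $i<\mu$ with $q\in P_i$. Since $f$ is $\pitchfork$-unbounded over $Y$ we have $f\not\pitchfork g_i$, so there is $k\geq N$ with $f\frestr I_k=g_i\frestr I_k$. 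Applying the defining property of $g_i$ to this $k$ and the condition $q\in P_i$, we obtain $q'\leq q$ with $q'\Vdash\dot h\frestr I_k=g_i\frestr I_k=f\frestr I_k$; but $q'\leq q\Vdash f\frestr I_k\neq\dot h\frestr I_k$ (as $k\geq N$), a contradiction. Hence $\Vdash f\not\pitchfork\dot h$. (Since $\pitchfork$ lives on $2^\omega$, the formally required $Y\subseteq\omega^\omega$ is obtained via the usual coding, as remarked after Context \ref{ContextUnbd}.)

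I expect the only genuine difficulty to be hitting on the right definition of $g_i$: it is tempting to want $g_i\frestr I_k$ to be a value of $\dot h\frestr I_k$ \emph{decided} by some condition of $P_i$, but conditions in a centered piece need not decide any block value, so one must instead use the weaker ``not excluded by $P_i$'' property, whose consistency is precisely what closure of $P_i$ under finite lower bounds (together with $|I_k|<\omega$) buys. Everything else---the cardinality bound $|Y|\leq\mu<\kappa$ and the routine refinement of $p$ to a condition deciding $N$---is standard.
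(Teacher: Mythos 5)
Your proof is correct and follows essentially the same approach as the cited Lemma $1^*$ of Brendle (the paper itself reuses exactly this idea in the proof of Lemma~\ref{LaverStarPitchfork}(c), where for each stem $s$ and each $n$ one finds $\sigma_{s,n}\in 2^{I_n}$ not excludable by the centered piece of conditions with stem $s$). The key point you identify — using finiteness of $2^{I_k}$ together with a common lower bound in $\Por$ (not in $P_i$) to extract a single block value $t$ that no condition of $P_i$ can force $\dot h\frestr I_k$ to avoid — is precisely the intended argument, and the cardinality bound $|Y|\leq\mu<\kappa$ and the final density contradiction are standard.
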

The following result shows why $\pitchfork$ is useful to deal with preserving $\cov(\Nwf)$ small and $\non(\Nwf)$ large.
\begin{lemma}[{\cite[Lemma 7]{mejia}}]\label{InvforPitchfork}
   $\cov(\Nwf)\leq\bfrak_\pitchfork\leq\non(\Mwf)$ and $\cov(\Mwf)\leq\dfrak_\pitchfork\leq\non(\Nwf)$.
\end{lemma}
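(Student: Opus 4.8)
The plan is to reduce everything to the single fact, recorded in Example~\ref{SubsecCovNull}, that for every $h\in2^\omega$ the section $(\pitchfork)^h$ is at the same time co-null and an $F_\sigma$ meager set; equivalently its complement $2^\omega\menos(\pitchfork)^h=\{f\in2^\omega:f\not\pitchfork h\}$ is at the same time null and comeager. The other ingredient is the evident symmetry of $\pitchfork$: since $f\frestr I_k\neq g\frestr I_k$ is symmetric in $f$ and $g$, we have $f\pitchfork g$ iff $g\pitchfork f$, so $(\pitchfork)^h=\{f:f\pitchfork h\}=\{f:h\pitchfork f\}$. Granting these, each of the four inequalities is an instance of one of two trivial dualities: a family of null (resp.\ meager) subsets of $2^\omega$ whose union is all of $2^\omega$ has size at least $\cov(\Nwf)$ (resp.\ $\cov(\Mwf)$); and a non-null (resp.\ non-meager) subset of $2^\omega$ meets every co-null (resp.\ comeager) set.

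For the two lower bounds: if $F$ is a $\pitchfork$-unbounded family, then for each $f\in F$ the set $N_f:=2^\omega\menos(\pitchfork)^f$ is null, and for every $g\in2^\omega$ there is $f\in F$ with $f\not\pitchfork g$, i.e.\ $g\not\pitchfork f$, i.e.\ $g\in N_f$; hence $\{N_f:f\in F\}$ covers $2^\omega$ and $\cov(\Nwf)\leq|F|$, so $\cov(\Nwf)\leq\bfrak_\pitchfork$. Dually, if $D$ is a $\pitchfork$-dominating family, then each $(\pitchfork)^f$ ($f\in D$) is meager, and for every $x\in2^\omega$ there is $f\in D$ with $x\pitchfork f$, i.e.\ $x\in(\pitchfork)^f$; hence $\{(\pitchfork)^f:f\in D\}$ covers $2^\omega$ and $\cov(\Mwf)\leq|D|$, so $\cov(\Mwf)\leq\dfrak_\pitchfork$.

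For the two upper bounds: let $X\subseteq2^\omega$ be non-meager; for every $g\in2^\omega$ the comeager set $2^\omega\menos(\pitchfork)^g$ meets $X$, and any $f$ in the intersection witnesses $f\not\pitchfork g$, so $X$ is $\pitchfork$-unbounded and $\bfrak_\pitchfork\leq|X|$; taking the infimum over non-meager $X$ gives $\bfrak_\pitchfork\leq\non(\Mwf)$. Dually, let $Y\subseteq2^\omega$ be non-null; for every $x\in2^\omega$ the co-null set $(\pitchfork)^x$ meets $Y$, and any $f$ in the intersection witnesses $f\pitchfork x$, hence $x\pitchfork f$, so $Y$ is $\pitchfork$-dominating and $\dfrak_\pitchfork\leq\non(\Nwf)$. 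The proof has no real obstacle: the genuine content is already isolated in Example~\ref{SubsecCovNull} (the measure computation uses $\sum_n2^{-|I_n|}<\infty$, guaranteed by $|I_n|=2^{n+1}$, and the category computation is a density argument), and the only point requiring care is to invoke the null/comeager description of $(\pitchfork)^{\cdot}$ on the correct side of the relation, which is legitimate exactly because $\pitchfork$ is symmetric.
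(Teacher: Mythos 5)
Your proof is correct and takes the same route as the cited source: all four inequalities follow directly from the fact stated in Example~\ref{SubsecCovNull} that each section $(\pitchfork)^g$ is co-null and $F_\sigma$ meager, together with the symmetry of $\pitchfork$, by covering $2^\omega$ with the null (resp.\ meager) sections for the lower bounds and by letting a non-meager (resp.\ non-null) set meet every comeager (resp.\ co-null) section for the upper bounds. Nothing is missing and no step fails.
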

\begin{example}[Preserving union of null sets is not null]\label{SubsecAddNull}
   Define
   \[\Sl=\cj{\varphi:\omega\to[\omega]^{<\omega}}{\exists_{k<\omega}\forall_{n<\omega}(|\varphi(n)|\leq(n+1)^k)}\]
   the \emph{space of slaloms}. As a Polish space, this is coded by reals in $\omega^\omega$. For $f\in\omega^\omega$ and a slalom $\varphi$,
   define $f\in^*_n\varphi\sii\forall_{k\geq n}(f(k)\in\varphi(k))$, so\footnote{In \cite{mejia}, the relation $\in^*$ is denoted by $\subseteq^*$, but it may be confused with the relation of `almost containment' between subsets of $\omega$.} $f\in^*\varphi\sii \forall_{k<\omega}^\infty(f(k)\in
   \varphi(k))$. From the characterization given by \cite[Thm. 2.3.9]{barju}, $\bfrak_{\in^*}=\add(\Nwf)$ and
   $\dfrak_{\in^*}=\cof(\Nwf)$.
\end{example}
\begin{lemma}[Judah and Shelah, \cite{jushe} and \cite{brendle}]\label{centeredAddNull}
   Given $\mu<\kappa$ infinite cardinals, every $\mu$-centered forcing notion satisfies $(+^\kappa_{\cdot,\in^*})$.
\end{lemma}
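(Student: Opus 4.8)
The plan is to use a centered decomposition of $\Por$ to trap an arbitrary name for a slalom inside a family of fewer than $\kappa$ ground-model slaloms, and then unwind Definition~\ref{DefPlusProp}. So let $\Por$ be $\mu$-centered, say $\Por=\bigcup_{i<\mu}\Por_i$ with each $\Por_i$ centered; I would first note that $\Por$ has the $\mu^+$-cc (an antichain meets each $\Por_i$ in at most one point) and that $\mu^+\leq\kappa$. Fix a $\Por$-name $\dot\varphi$ for a slalom. The one delicate point is to control its width: from $\Vdash\dot\varphi\in\Sl$, i.e.\ $\Vdash\exists_{k<\omega}\forall_{n<\omega}(|\dot\varphi(n)|\leq(n+1)^k)$, the set of conditions $q$ admitting some $k<\omega$ with $q\Vdash\forall_{n<\omega}(|\dot\varphi(n)|\leq(n+1)^k)$ is dense, so I would choose a maximal antichain $W\subseteq\Por$ of such conditions together with witnesses $k_w<\omega$ ($w\in W$); by the $\mu^+$-cc, $|W|\leq\mu$.

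Next, for $w\in W$ and $i<\mu$ I would define a function $\psi_{w,i}\colon\omega\to[\omega]^{<\omega}$ by
\[\psi_{w,i}(n)=\cj{m<\omega}{\exists_{q\in\Por_i}(q\leq w\wedge q\Vdash m\in\dot\varphi(n))}.\]
I claim each $\psi_{w,i}$ is a slalom. Indeed, if $m_1,\dots,m_j\in\psi_{w,i}(n)$ are witnessed by $q_1,\dots,q_j\in\Por_i$ with each $q_t\leq w$, then, since $\Por_i$ is centered, there is a common lower bound $r\in\Por$ of $q_1,\dots,q_j$, and $r\leq q_1\leq w$; hence $r\Vdash\{m_1,\dots,m_j\}\subseteq\dot\varphi(n)$ (so $r\Vdash|\dot\varphi(n)|\geq j$), and since $r\leq w$ we get $j\leq(n+1)^{k_w}$. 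Thus $|\psi_{w,i}(n)|\leq(n+1)^{k_w}$ for all $n$, i.e.\ $\psi_{w,i}\in\Sl$. Put $Y=\cj{\psi_{w,i}}{w\in W,\ i<\mu}$, viewed as a subset of $\omega^\omega$ through a fixed coding of $\Sl$; then $|Y|\leq|W|\cdot\mu\leq\mu<\kappa$.

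Finally, I would verify that this $Y$ witnesses $(+^\kappa_{\Por,\in^*})$. Let $f\in\omega^\omega$ be $\in^*$-unbounded over $Y$, so that $f\not\in^*\psi_{w,i}$ for every $w\in W$ and $i<\mu$, and assume for contradiction that $\Vdash f\not\in^*\dot\varphi$ fails. Then some $p\in\Por$ and $n_0<\omega$ satisfy $p\Vdash\forall_{n\geq n_0}(f(n)\in\dot\varphi(n))$. Since $W$ is predense, pick $w\in W$ compatible with $p$, a common extension $p'\leq p,w$, and $i<\mu$ with $p'\in\Por_i$. For every $n\geq n_0$ we then have $p'\leq w$, $p'\in\Por_i$ and $p'\Vdash f(n)\in\dot\varphi(n)$, so $f(n)\in\psi_{w,i}(n)$; hence $f\in^*\psi_{w,i}$ (with witness $n_0$), contradicting the choice of $f$. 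Therefore $(+^\kappa_{\Por,\in^*})$ holds.

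The only real obstacle is the width step. If one worked with the naive ``supremum slalom'' $\cj{m<\omega}{\exists_{q\in\Por_i}(q\Vdash m\in\dot\varphi(n))}$ without first cutting $\Por$ along $W$, its values could fail to be finite: the common lower bound furnished by centeredness of $\Por_i$ need not lie in $\Por_i$, hence it bounds $|\dot\varphi(n)|$ from below only at a condition outside $\Por_i$, and nothing prevents $|\dot\varphi(n)|$ from being forced arbitrarily large there. Passing below a $w$ that has already fixed a uniform bound $(n+1)^{k_w}$ removes this, at the modest cost of multiplying $|Y|$ by $|W|\leq\mu$; everything else is bookkeeping.
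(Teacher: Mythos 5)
Your proof is correct, and since the paper cites this lemma from Judah--Shelah and Brendle without including a proof, there is nothing to compare it against directly; but your argument is the standard one for $\sigma$-centered (or $\mu$-centered) forcing and $\in^*$. The one genuinely delicate point is exactly the one you isolate: the naive slaloms $\psi_i(n)=\cj{m}{\exists_{q\in\Por_i}(q\Vdash m\in\dot\varphi(n))}$ need not have controlled width, because the common lower bound supplied by centeredness of $\Por_i$ lies only in $\Por$ and there is no uniform forced bound on $|\dot\varphi(n)|$ until some condition has decided the exponent $k$. Refining along the maximal antichain $W$ of conditions that fix such a $k$, and using $\mu^+$-cc to keep $|W|\leq\mu$, handles this cleanly; the rest (predensity of $W$ to locate a suitable $w$ and $i$ for a given $p$, and the bookkeeping $|Y|\leq\mu<\kappa$) is routine and correctly executed.
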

\begin{lemma}[Kamburelis, \cite{kamburelis}]\label{spamPlus}
   Every boolean algebra with a strictly positive finitely additive measure (see \cite{kamburelis} for this concept)
   satisfies $(+_{\cdot,\in^*})$. In particular, subalgebras of the random
   algebra satisfy that property.
\end{lemma}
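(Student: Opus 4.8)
The plan is to adapt Kamburelis's averaging argument: from a name for a slalom we read off, via the measure, countably many ground--model slaloms that already ``catch'' every real the name could possibly catch. Write $\Bwf$ for the algebra and $\mu$ for its strictly positive finitely additive measure, normalised so that $\mu(1_\Bwf)=1$. First I would reduce to the case that $\Bwf$ is \emph{complete}: $\mu$ extends to a finitely additive measure on the Dedekind completion $\hat\Bwf$ of $\Bwf$ (finitely additive measures always extend from subalgebras), this extension is still strictly positive because $\Bwf$ is dense in $\hat\Bwf$, and $\Bwf$ is completely embedded in $\hat\Bwf$, so by the remark following Theorem~\ref{preservPlus} it is enough to prove $(+_{\hat\Bwf,\in^*})$. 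Thus I may assume every Boolean value $\|\cdots\|$ occurring below lies in $\Bwf$. Since $\kappa=\aleph_1$ here, given a $\Bwf$-name $\dot\varphi$ for a member of $\Sl$ it suffices to produce a \emph{countable} set $Y\subseteq\Sl$ witnessing $(+_{\Bwf,\in^*})$.

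The heart of the proof is an averaging estimate. For $k<\omega$ let $b_k:=\big\|\,\forall m\,\big(|\dot\varphi(m)|\le(m+1)^k\big)\,\big\|\in\Bwf$, and for $k,n,j<\omega$ put $a^k_{n,j}:=b_k\wedge\big\|\,j\in\dot\varphi(n)\,\big\|$. I claim $\sum_{j<\omega}\mu(a^k_{n,j})\le(n+1)^k$ for all $k,n$. To prove it, fix $N<\omega$ and pass to the finite subalgebra of $\Bwf$ generated by $b_k$ and the $\|j\in\dot\varphi(n)\|$ for $j<N$; each atom $\alpha$ of it decides these. If $\alpha\le b_k$, then $\{j<N:\alpha\le a^k_{n,j}\}$ is exactly the set that $\alpha$ decides $\dot\varphi(n)\cap N$ to be, which $\alpha$ also forces to have $\le(n+1)^k$ elements; if $\alpha\le\neg b_k$, then $\alpha\wedge a^k_{n,j}=0$ for every $j$. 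Summing $\mu(\alpha)\cdot|\{j<N:\alpha\le a^k_{n,j}\}|$ over the finitely many atoms gives $\sum_{j<N}\mu(a^k_{n,j})\le(n+1)^k\mu(1_\Bwf)=(n+1)^k$, and letting $N\to\infty$ proves the claim.

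Now, for $k<\omega$ and rational $\varepsilon\in(0,1]$ define $\psi_{k,\varepsilon}\colon\omega\to[\omega]^{<\omega}$ by $\psi_{k,\varepsilon}(n):=\{j<\omega:\mu(a^k_{n,j})\ge\varepsilon\}$ when $n+1\ge 1/\varepsilon$ and $\psi_{k,\varepsilon}(n):=\varnothing$ otherwise; by the estimate $|\psi_{k,\varepsilon}(n)|\le(n+1)^k/\varepsilon\le(n+1)^{k+1}$ on the support, so $\psi_{k,\varepsilon}\in\Sl$. Put $Y:=\{\psi_{k,\varepsilon}:k<\omega,\ \varepsilon\in\mathbb{Q}\cap(0,1]\}$, a countable set. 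To see $Y$ witnesses $(+_{\Bwf,\in^*})$, take $f\in\omega^\omega$ with $f\not\in^*\psi$ for every $\psi\in Y$ and suppose $\Vdash f\not\in^*\dot\varphi$ fails; then some $p\ne 0$ forces $f\in^*\dot\varphi$, and strengthening $p$ I obtain $p'\le p$ and $k,m<\omega$ with $p'\le b_k$ and $p'\Vdash\forall n\ge m\,(f(n)\in\dot\varphi(n))$, hence $p'\le a^k_{n,f(n)}$ for all $n\ge m$. Setting $\varepsilon:=\mu(p')$ --- which is $>0$ \emph{by strict positivity of $\mu$}, the one place the hypothesis is used --- and choosing a rational $\varepsilon'\in(0,\varepsilon]$, we get $\mu(a^k_{n,f(n)})\ge\varepsilon'$ for $n\ge m$, so $f(n)\in\psi_{k,\varepsilon'}(n)$ for all $n\ge\max\{m,\lceil 1/\varepsilon'\rceil\}$, i.e.\ $f\in^*\psi_{k,\varepsilon'}$, contradicting the choice of $f$. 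This proves $(+_{\Bwf,\in^*})$; the ``in particular'' is immediate since the restriction of Lebesgue measure to a subalgebra of the random algebra $\Bor$ is again strictly positive and finitely additive.

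The only genuinely delicate point is that with a merely finitely additive $\mu$ one cannot integrate infinite sums nor invoke any continuity of $\mu$, so the estimate $\sum_j\mu(a^k_{n,j})\le(n+1)^k$ must be squeezed out of finite subalgebras by counting atoms; one also has to be a little careful that the Boolean values in play really belong to $\Bwf$, which is precisely what the initial reduction to a complete algebra arranges. Beyond that the argument is routine bookkeeping with nice names and the forcing relation.
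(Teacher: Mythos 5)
Your argument is correct and is, in all essentials, the standard Kamburelis averaging argument that the paper cites from \cite{kamburelis} without reproducing a proof. The reduction to the Dedekind completion (so that the Boolean values lie in the algebra, with strict positivity preserved by density), the finite-subalgebra atom-counting to establish $\sum_{j}\mu(a^k_{n,j})\le(n+1)^k$ without invoking countable additivity, and the thresholded ground-model slaloms $\psi_{k,\varepsilon}$ with rational $\varepsilon$ are exactly the right moves, and you correctly pinpoint the single use of strict positivity.
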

\begin{example}[Preserving splitting families]\label{SubsecSplit}
   For $A,B\in[\omega]^\omega$, define $A\varpropto_n B\sii(B\menos n\subseteq A\disy B\menos n\subseteq\omega\menos A)$, so
   $A\varpropto B\sii(B\subseteq^* A\disy B\subseteq^*\omega\menos A)$. Note also that $A\not\varpropto B$ iff \emph{$A$ splits $B$}, that is, $A\cap B$ and $B\menos A$ are infinite. It is clear from the standard definitions that the \emph{splitting number} is $\sfrak=\bfrak_\varpropto$ and the \emph{reaping number} is $\rfrak=\dfrak_\varpropto$.
\end{example}
\begin{lemma}[Baumgartner and Dordal, \cite{baudor} and {\cite[Main Lemma 3.8]{brendlebog}}]\label{HechPlus}
   $(+_{\Dor,\varpropto})$ holds.
\end{lemma}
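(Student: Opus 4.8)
The plan is to rephrase $(+_{\Dor,\varpropto})$ as a combinatorial statement about infinite subsets of $\omega$ attached to the name, and then to verify it by the rank analysis of Hechler names along the stem tree, which is the content of \cite{baudor} and \cite[Main Lemma 3.8]{brendlebog}.

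First I would fix a $\Dor$-name $\dot B$ for an infinite subset of $\omega$ and reduce. For a $\Dor$-condition $r$ put $C_r:=\cj{m<\omega}{\exists q\le r\ (q\Vdash m\in\dot B)}$. Since $r$ forces $\dot B$ to be infinite, a density argument gives that each $C_r$ is infinite. Moreover, since membership in a ground-model set $A\in[\omega]^\omega\cap V$ is decided outright, for such $A$ the statement $\Vdash_{\Dor}$``$A\not\varpropto\dot B$'' (i.e. ``$A$ splits $\dot B$'') is equivalent to: for every $r\in\Dor$ and every $k<\omega$ there are $q\le r$ and $m\ge k$ with $q\Vdash m\in\dot B$ and $m\in A$, and likewise with $m\notin A$. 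Consequently, if $A$ splits $C_r$ then $C_r\cap A$ and $C_r\menos A$ are both infinite and both supply such witnesses $m\ge k$, so it is enough to produce a \emph{countable} family $Y\subseteq[\omega]^\omega\cap V$ with the property that whenever $A\in[\omega]^\omega\cap V$ splits every member of $Y$, it splits $C_r$ for every $r\in\Dor$.

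The construction of $Y$ is the crux, and here the shape of Hechler conditions enters. A condition is a pair $(s,f)$ with $s\subseteq f$, and below $(s,f)$ the reachable stems are the $t\supseteq s$ with $t(i)\ge f(i)$ for $|s|\le i<|t|$; a one-step extension $s\mapsto s^\frown\langle j\rangle$ constrains only the single new value $f(|s|)$, so the reachable stems always form an infinite tree, and $C_{(s,f)}$ is the union of the elements forced into $\dot B$ by the canonical conditions on those stems. For each stem $s$ one performs a well-founded rank analysis of the name below the canonical stem-$s$ condition: define $\rk_s(q,N)=0$ if $q$ already forces some $m\ge N$ into $\dot B$, and $\rk_s(q,N)\le\alpha$ if below every extension of $q$ there is a condition of smaller rank; since $\dot B$ is forced infinite this rank is everywhere defined, so one recursively splits into countable antichains of strictly decreasing rank, obtaining a well-founded (hence countable) tree of extensions, each deciding a longer initial segment of $\dot B$, and one collects the finitely many elements each such condition forces into $\dot B$ into countably many infinite sets $\langle D_{s,i}\rangle_{i<\omega}$. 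Running the recursion stem by stem is what keeps it uniform in the side condition: for every side function $f$, some $D_{s,i}$ is $\subseteq^*C_{(s,f)}$, the finite error absorbing the constraint on $f(|s|)$. Then $Y:=\cj{D_{s,i}}{s\in\omega^{<\omega},\ i<\omega}$ is countable, and if $A$ splits every $D_{s,i}$, then for each $(s,f)$ the set $C_{(s,f)}$ almost contains some $D_{s,i}$ that $A$ splits, so $A$ splits $C_{(s,f)}$; by the reduction above this gives $\Vdash_{\Dor}$``$A\not\varpropto\dot B$'', as required.

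The main obstacle is precisely this last point: organizing the rank recursion so that the countably many accumulated sets $D_{s,i}$ really do capture every $C_{(s,f)}$ up to a finite error, uniformly in $f$. This is where one must use, in an essential way, that Hechler side conditions are functions and that lengthening a stem is paid for coordinatewise, and it is the combinatorial heart carried out in \cite{baudor} and \cite[Main Lemma 3.8]{brendlebog}; granting it, the bookkeeping sketched above finishes the proof.
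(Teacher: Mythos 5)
The paper does not prove this lemma --- it is cited from Baumgartner--Dordal and from Brendle's Bogot\'a notes --- so I can only judge the internal correctness of your sketch. Your reduction is sound: with $C_r=\{m<\omega:\exists q\le r\ (q\Vdash m\in\dot B)\}$, the statement $\Vdash A\not\varpropto\dot B$ is equivalent to ``$A$ splits every $C_r$,'' and by the stem structure of $\Dor$ it suffices to produce, for each stem $s$, countably many ground-model infinite sets $D_{s,i}$ such that every $C_{(s,f)}$ almost contains some $D_{s,i}$.

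The rank you write down, however, does not do what you need. With $\rk_s(q,N)=0$ when $q$ forces some $m\ge N$ into $\dot B$, and $\rk_s(q,N)\le\alpha$ when ``below every extension of $q$ there is a condition of smaller rank,'' \emph{every} condition $q$ gets rank $\le 1$: since $\dot B$ is forced infinite, below every extension of $q$ there is already a condition of rank $0$. So the ``well-founded tree of extensions of strictly decreasing rank'' you invoke has depth one, and the step from it to the infinite sets $D_{s,i}$, let alone to the uniformity in $f$, is never actually described. The Baumgartner--Dordal rank lives on \emph{stems}, not on arbitrary conditions, and its recursive clause has the form ``for all but finitely many $j$, the rank of the one-step stem extension of $t$ by $j$ drops''; it is precisely this cofinite quantifier over immediate successors of the stem, together with the observation that lengthening a stem by one coordinate constrains only a single value of the side function, that makes the rank nontrivial and lets the extracted sets absorb the side condition up to a finite error. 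You explicitly defer this uniformity claim to the references, which is the same stance the paper takes, but the sketch you interpose between the reduction and the citation is not a correct outline of that argument.
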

Recall that $\Bor$ is given by the complete boolean algebra of Borel sets of $2^\omega$ modulo the $\sigma$-ideal $\Nwf$. Let $\mu$ be the Lebesgue-measure corresponding to $2^\omega$ and, for a formula $\psi$ in the forcing language of $\Bor$,
$||\psi||$ denotes the supremum of the conditions in $\Bor$ that forces $\psi$. If there exists such condition, $||\psi||$ becomes the maximum one. Recall the interval partition $\langle I_n\rangle_{n<\omega}$ fixed in example \ref{SubsecCovNull}.\\
Note that $(+_{\Bor,\varpropto})$ does \underline{not} hold.
Indeed, define a $\Bor$-name $\dot{x}$ for an infinite subset of $\omega$ such that $\mu(||k\in\dot{x}||)=1/{2^{n+1}}$ for any $k\in I_n$. Given any sequence $\{z_n\}_{n\in\omega}$ of infinite subsets of $\omega$, it is easy to construct an $a\subseteq\omega\menos I_0$ infinite such that, for any $n<\omega$, $a$ splits $z_n$ and $|a\cap I_n|\leq 1$. Note that $\mu(||\dot{x}\cap a\neq\varnothing||)\leq1/2$, moreover, $\Vdash_\Bor|\dot{x}\cap a|<\aleph_0$.\\
Therefore, any poset that adds random reals does \underline{not} satisfy $(+_{\cdot,\varpropto})$. In particular, $(+_{\Aor,\varpropto})$ does \underline{not} hold.\\
It is not known whether $(+_{\Eor,\varpropto})$ holds.
\begin{example}[Preserving finitely splitting families]\label{SubsecFinSp}
   Say that $\bar{J}=\langle J_n\rangle_{n<\omega}$ is an \emph{interval partition of $\omega$} if it is a partition of $\omega$ into non-empty finite intervals such that $\max(J_n)<\min(J_{n+1})$ for all $n<\omega$. For $a\in[\omega]^\omega$ and an interval partition $\bar{J}$ of $\omega$, define $a\rhd_n\bar{J}\sii(\forall_{k\geq n}(J_k\nsubseteq a)\disy\forall_{k\geq n}(J_k\nsubseteq\omega\menos a))$, so $a\rhd\bar{J}\sii(\forall^\infty_{k\in\omega}(I_n\nsubseteq a)\disy\forall^\infty_{k\in\omega}(I_n\nsubseteq \omega\menos a))$. $a\ntriangleright\bar{J}$ is known as \emph{$a$ splits $\bar{J}$}, $\ffrak_\sfrak=\bfrak_\rhd=\max\{\bfrak,\sfrak\}$ is the \emph{finitely splitting number} and $\ffrak_\rfrak=\dfrak_\rhd=\min\{\dfrak,\rfrak\}$ is the \emph{finitely reaping number}. See \cite{kamb} for details about these cardinal invariants.
\end{example}
\begin{lemma}\label{PresPlusUnbd->PresRhd}
   For a poset $\Por$, $(+_{\Por,<^*})$ implies $(+_{\Por,\rhd})$. In particular, $(+_{\cdot,\rhd})$ holds for $\Bor$ and $\Eor$.
\end{lemma}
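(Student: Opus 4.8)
The plan is to reduce $(+_{\Por,\rhd})$ directly to $(+_{\Por,<^*})$ by showing that for a suitable coding of interval partitions by reals in $\omega^\omega$, $<^*$-unboundedness over a witnessing set $Y$ already forces $a\not\rhd\bar J$. First I would fix the standard coding: an interval partition $\bar J=\langle J_n\rangle_{n<\omega}$ is coded by its sequence of right endpoints, i.e.\ by the strictly increasing function $g\in\omega^\omega$ with $g(n)=\max(J_n)+1$, so that $J_n=[g(n-1),g(n))$ (with $g(-1)=0$). The key observation is monotonicity in $\bar J$: if $g\le h$ pointwise and $\bar J^h$ refines $\bar J^g$ in the sense that each block of $\bar J^h$ is contained in a block of $\bar J^g$ — more precisely, if the blocks of $\bar J^h$ are ``spread out'' enough — then splitting $\bar J^g$ helps splitting $\bar J^h$. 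To make this clean I would instead work with the following: given any interval partition $\bar J$, one can build from it a sparser interval partition $\bar K=\langle K_n\rangle$ with $K_n$ a union of two consecutive blocks of $\bar J$, such that if $a$ splits $\bar K$ (in the sense $a\ntriangleright\bar K$, i.e.\ cofinally many $K_k\subseteq a$ and cofinally many $K_k\subseteq\omega\menos a$), then $a$ splits $\bar J$. The real point, however, runs the other direction and via domination.

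The cleaner route, which I would actually carry out: let $\dot{\bar J}$ be a $\Por$-name for an interval partition of $\omega$, and let $\dot h\in\omega^\omega$ be the name for its endpoint-coding function as above. Apply $(+_{\Por,<^*})$ to $\dot h$ to obtain $Y\subseteq\omega^\omega$, $|Y|<\aleph_1$ (i.e.\ countable), such that any $f$ which is $<^*$-unbounded over $Y$ satisfies $\Vdash f\not<^*\dot h$. Now I would enlarge $Y$ in a harmless way to a countable set $Y'$ and define from any given $a\in[\omega]^\omega$ the associated ``block-indicator'' data: the point is to translate the combinatorial content of ``$a\rhd\bar J$'' into ``some $f$ is $<^*$ below $\dot h$''. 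Concretely, given $a$ that is \emph{not} $\rhd$-unbounded over $Y'$, I want to produce witnesses; but $\rhd$-unbounded over $Y'$ means: for every $g\in Y'\cap\omega^\omega$ coding an interval partition $\bar J^g$, $a$ splits $\bar J^g$. From such an $a$ I would extract, for each $g\in Y'$, the increasing enumeration of block-indices $k$ with $J^g_k\subseteq a$ and of those with $J^g_k\subseteq\omega\menos a$, and assemble a single real $f_a\in\omega^\omega$ (depending on $a$ and $Y'$) whose growth rate is dictated by how sparse these ``monochromatic blocks'' are. The design constraint on $f_a$ is: if $f_a\not<^* \dot h$, i.e.\ $f_a(k)\ge\dot h(k)$ for infinitely many $k$, then those $k$ pin down cofinally many blocks $\dot J_k$ that are entirely inside $a$ or entirely inside $\omega\menos a$, giving $\Vdash a\not\rhd\dot{\bar J}$. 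Running $(+_{\Por,<^*})$ on $\dot h$ then forces $f_a\not<^*\dot h$ as long as $f_a$ is $<^*$-unbounded over $Y$ — which I arrange by absorbing into $Y'$ everything needed so that ``$a$ splits $\bar J^g$ for all $g\in Y'$'' forces $f_a$ to grow faster than every member of $Y$ infinitely often.

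So the skeleton is: (1) fix the endpoint coding of interval partitions; (2) given the name $\dot{\bar J}$, pass to the coding name $\dot h$ and apply $(+_{\Por,<^*})$ to get a countable $Y$; (3) close $Y$ off under the finitely many arithmetic operations below into a countable $Y'$, and \emph{define} $Y'$ itself to be the witness for $(+_{\Por,\rhd})$; (4) check the implication ``$a$ $\rhd$-unbounded over $Y'$'' $\Rightarrow$ ``$f_a$ $<^*$-unbounded over $Y$'' $\Rightarrow$ (by $(+_{\Por,<^*})$) $\Vdash f_a\not<^*\dot h$ $\Rightarrow$ $\Vdash a\not\rhd\dot{\bar J}$. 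For the ``in particular'' clause, $(+_{\Bor,<^*})$ holds because $\Bor$ is $\omega^\omega$-bounding (noted in Example \ref{SubsecUnbd}) and $(+_{\Eor,<^*})$ is Lemma \ref{EvDiffPlus}, so $(+_{\cdot,\rhd})$ follows for both.

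The main obstacle I anticipate is step (4): rigging the definition of $f_a$ from $a$ and $Y'$ so that the chain of implications is tight in \emph{both} directions — i.e.\ that $f_a$ is genuinely $<^*$-unbounded over $Y$ (not merely over $Y'$) whenever $a$ splits every partition coded in $Y'$, \emph{and} that $f_a(k)\ge\dot h(k)$ forces a monochromatic block of $\dot{\bar J}$ at a controlled location. This forces a careful bookkeeping: $f_a$ must record, at argument $k$, roughly the right endpoint of the $k$-th $a$-monochromatic block so that overtaking $\dot h$ at $k$ witnesses that the $k$-th block of $\dot{\bar J}$ sits inside a single $a$-monochromatic block, hence inside $a$ or inside its complement. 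The one subtlety is the disjunctive form of $\rhd$ (``$\forall^\infty J_k\nsubseteq a$ \emph{or} $\forall^\infty J_k\nsubseteq\omega\menos a$'') versus splitting (``cofinally many $J_k\subseteq a$ \emph{and} cofinally many $J_k\subseteq\omega\menos a$''): I would handle it by building \emph{two} auxiliary reals $f^0_a,f^1_a$ (one per color) and taking $f_a=\max(f^0_a,f^1_a)$ pointwise, so that $f_a$ overtaking $\dot h$ infinitely often yields infinitely many monochromatic blocks of \emph{some} fixed color, contradicting $a\rhd\dot{\bar J}$. Everything else — the coding, the countable closure, the appeal to Lemma \ref{EvDiffPlus} and Example \ref{SubsecUnbd} — is routine.
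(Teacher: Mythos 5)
Your high-level plan --- apply $(+_{\Por,<^*})$ to the name $\dot h(n)=\max(\dot J_n)+1$ for the right endpoints, extract a countable $Y$, build a witness for $(+_{\Por,\rhd})$ from it, and show that splitting the witness forces an $f$ derived from $a$ to be $<^*$-unbounded over $Y$ --- does match the paper's route. But the step you defer as ``careful bookkeeping'' is the actual content of the proof and needs a concrete construction. The paper takes a single strictly increasing $<^*$-bound $h$ of $Y$, forms its iterate $\hat h(0)=0$, $\hat h(n+1)=h(\hat h(n))$, and sets $J'_k:=[\hat h(2k),\hat h(2k+2))$; the witness set for $(+_{\Por,\rhd})$ is the singleton $\{\bar J'\}$. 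The function $f$ built from $a$ is supported on the \emph{first} half-block $[\hat h(2k),\hat h(2k+1))$ of each $J'_k\subseteq a$, where it equals $h$; then $f\not<^* h_m$ for all $m$ (since $f=h$ on an infinite set), and whenever $n\in[\hat h(2k),\hat h(2k+1))$ and $\max\dot J_n<f(n)=h(n)$, the ``spacer'' second half-block forces $\dot J_n\subseteq[n,f(n))\subseteq J'_k\subseteq a$. This doubling is what makes ``$f$ overtakes $\dot h$ at $n$'' pin down $\dot J_n$ inside a single $a$-monochromatic block of $\bar J'$; your proposal acknowledges the need for this but doesn't supply it.

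More seriously, your fix for the disjunctive form of $\rhd$ --- taking $f_a=\max(f^0_a,f^1_a)$ --- is wrong. To get $\Vdash a\not\rhd\dot{\bar J}$ you must establish \emph{both} $\exists^\infty_k(\dot J_k\subseteq a)$ \emph{and} $\exists^\infty_k(\dot J_k\subseteq\omega\menos a)$, because $a\rhd\dot{\bar J}$ is a disjunction of two $\forall^\infty$ clauses. If $\max(f^0_a,f^1_a)$ overtakes $\dot h$ infinitely often you only learn that at least one of $f^0_a,f^1_a$ does, i.e.\ infinitely many monochromatic blocks of \emph{one} color; that leaves the other disjunct of $a\rhd\dot{\bar J}$ unrefuted. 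The correct move is to apply the $(+_{\Por,<^*})$-conclusion twice: $a\ntriangleright\bar J'$ gives infinitely many $J'_k\subseteq a$ and infinitely many $J'_k\subseteq\omega\menos a$, so both $f^0_a$ (from the $a$-blocks of $\bar J'$) and $f^1_a$ (from the $(\omega\menos a)$-blocks) are individually $<^*$-unbounded over $Y$, each yielding one of the required conjuncts --- exactly the paper's ``do the same argument with $\omega\menos a$ in place of $a$.'' The ``in particular'' clause you handle as the paper does.
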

\begin{proof}
   Let $\dot{\bar{J}}$ be a $\Por$-name of an interval partition of $\omega$. By $(+_{\Por,<^*})$, let
   $\{h_n\}_{n<\omega}$ be a sequence of reals in $\omega^\omega$ such that $\Vdash\exists^\infty_n(\max(\dot{J}_n)+1\leq f(n))$ for any $f\in\omega^\omega$ which is $<^*$-unbounded over $\{h_n\}_{n<\omega}$. Choose an $h\in\omega^\omega$ which is a strictly increasing upper $<^*$-bound of $\{h_n\}_{n<\omega}$ such that $h(0)>0$. Define $\hat{h}\in\omega^\omega$ recursively, where
   $\hat{h}(0)=0$ and $\hat{h}(n+1)=h(\hat{h}(n))$. Put $J'_n:=[\hat{h}(2n),\hat{h}(2n+2))$ (interval notation), so
   $\bar{J}':=\langle J'_n\rangle_{n\in\omega}$ is an interval partition of $\omega$. It is enough to prove $\Vdash a\ntriangleright\dot{\bar{J}}$ for any $a\in[\omega]^\omega$ such that $a\ntriangleright\bar{J'}$. Indeed, define $f\in\omega^\omega$ such that
   \[f(n)=\left\{\begin{array}{ll}
       h(n) & \textrm{$n\in[\hat{h}(2k),\hat{h}(2k+1))$ and $J'_k\subseteq a$ for some $k\in\omega$,}\\
       0    & \textrm{otherwise.}
   \end{array}\right.\]
   It is clear that $f\not<^*h$, so $\Vdash\exists^\infty_n(\max(\dot{J}_n)+1\leq f(n))$. Now, let $G$ be a $\Por$-generic set over the ground model. In $V[G]$: fix $m<\omega$ and choose $n,k'\in\omega$ such that
   $\hat{h}(k')>m$, $n\in[\hat{h}(k'),\hat{h}(k'+1))$ and $\max(J_n)+1\leq f(n)$. As $f(n)$ cannot be 0, then
   $k'=2k$ for some $k\in\omega$, $J'_k\subseteq a$ and $f(n)=h(n)$. It is easy to check that $J_n\subseteq[n,f(n))\subseteq J'_k\subseteq a$. This gives us $\exists^\infty_n(J_n\subseteq a)$. To get $\exists^\infty_n(J_n\subseteq\omega\menos a)$, do the same argument but change $a$ by $\omega\menos a$ in the definition of $f$.
\end{proof}

\subsection{Preservation of $\sqsubset$-unbounded reals}

For the rest of this section, fix $M\subseteq N$ models of $\thzfc$, $\sqsubset$ a relation as in Context \ref{ContextUnbd} and $c\in N\cap\omega^\omega$ a $\sqsubset$-unbounded real over $M$.
\begin{definition}\label{DefCompSubordM}
   Given $\Por\in M$ and $\Qor$ posets, we say that \emph{$\Por$ is a complete suborder of $\Qor$ with respect to $M$}, denoted by
   $\Por\preceq_M\Qor$, if $\Por\subseteq\Qor$ and all maximal antichains of $\Por$ in $M$ are maximal antichains of $\Qor$.
\end{definition}
The main consequence of this definition is that, whenever $\Por\in M$ and $\Qor\in N$ are posets such that $\Por\preceq_M\Qor$ then, whenever
$G$ is $\Qor$-generic over $N$, $\Por\cap G$ is a $\Por$-generic set over $M$. Here, we are interested in the case where the real $c$ can be
preserved to be $\sqsubset$-unbounded over $M[G\cap\Por]$.
\begin{definition}\label{DefPresUnbdg}
   Assume $\Por\in M$ and $\Qor\in N$ posets such that $\Por\preceq_M\Qor$. We say that the property $(\star,\Por,\Qor,M,N,\sqsubset,c)$
   holds iff, for every $\dot{h}\in M$ $\Por$-name for a real in $\omega^\omega$, $\Vdash_{\Qor,N}c\not\sqsubset\dot{h}$.
   This is equivalent to saying that $\Vdash_{\Qor,N}$\lcom$c$ is $\sqsubset$-unbounded over $M^{\Por}$\rcom, that is,
   $c$ is $\sqsubset$-unbounded over $M[G\cap\Por]$ for every $G$ $\Qor$-generic over $N$.
\end{definition}
The last two definitions are important notions used in \cite{blsh}, \cite{BF} and \cite{mejia} for the preservation of unbounded reals and the
construction of matrix iterations. The following result is the first example of this preservation property that has been used for a matrix
iteration construction. It has been proved for $<^*$ but a proof for $\in^*$ can be done by a similar argument.
\begin{lemma}[Blass and Shelah, {\cite[Main Lemma]{blsh}}]\label{mathiasStar}
   Let $\sqsubset$ be $<^*$ or $\in^*$. In $M$, let $\Uwf$ be a non-principal ultrafilter on $\omega$. If $c\in N$ is a $\sqsubset$-unbounded real over $M$, then there exists an ultrafilter $\Vwf$ in $N$ extending $\Uwf$ such that $\Mor_\Uwf\preceq_M\Mor_\Vwf$ and $(\star,\Mor_\Uwf,\Mor_\Vwf,M,N,\sqsubset,c)$ holds.
\end{lemma}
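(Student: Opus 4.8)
The plan is to follow the original Blass–Shelah argument for Mathias forcing with an ultrafilter, adapting the bookkeeping so that it works uniformly for $\sqsubset$ being $<^*$ or $\in^*$. Recall that a condition in $\Mor_\Uwf$ is a pair $(s,A)$ with $s\in[\omega]^{<\omega}$, $A\in\Uwf$ and $\max(s)<\min(A)$, ordered by $(t,B)\leq(s,A)$ iff $t\supseteq s$, $t\menos s\subseteq A$ and $B\subseteq A$. The first task is to build, in $N$, the ultrafilter $\Vwf\supseteq\Uwf$ together with the verification that $\Mor_\Uwf\preceq_M\Mor_\Vwf$; the essential point is that a maximal antichain of $\Mor_\Uwf$ lying in $M$ must remain maximal in $\Mor_\Vwf$, which because of the combinatorics of Mathias forcing reduces to a \emph{pure decision} / Ramsey-type property: for every $\Mor_\Uwf$-name $\dot h$ of a real and every condition, there must be a pseudointersection-type set in $\Vwf$ deciding $\dot h$ sufficiently. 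So $\Vwf$ cannot be an arbitrary extension of $\Uwf$; it must be constructed by transfinite recursion of length $\cfrak^N$ (or $|\mathfrak{c}|$ many steps suffice since we only need to handle the $\Mor_\Uwf$-names in $M$, of which there are $\le|M\cap\Por|$-many, all sets in $M$) so as to diagonalize against all relevant names while keeping every set of $\Uwf$ and maintaining the finite intersection property.

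The heart of the matter is the interleaving of two demands at each recursion step. On one hand, when we meet an $\Mor_\Uwf$-name $\dot h\in M$ for a real in $\omega^\omega$, we want to add to our filter base a set $B$ (an infinite subset of $\omega$, almost contained in all sets chosen so far) such that for \emph{every} condition $(s,A)\in\Mor_\Uwf$ with $A\supseteq^* B$ and every $n$, the condition $(s,A\cap B)$ can be extended to decide $\dot h(n)$ — this is exactly the usual fusion/pure-decision construction for Mathias forcing, carried out inside $M$, and yields that the Mathias real added by $\Vwf$ (restricted through $\Mor_\Uwf$) computes the interpretation of $\dot h$. On the other hand, we must arrange that the given $\sqsubset$-unbounded real $c$ is \emph{not} eventually dominated by this interpretation: here we use that $c$ is $\sqsubset$-unbounded over $M$, so $c\not\sqsubset g$ for the ground-model real $g$ that codes all the finitely many ``possible values'' of $\dot h$ read off along $B$; the closed-ness of each $\sqsubset_n$ (Context \ref{ContextUnbd}) lets us transfer $c\not\sqsubset_n g$ for infinitely many $n$ into $c\not\sqsubset_n\dot h^{G}$ for infinitely many $n$ in the extension, because $\dot h^{G}$ agrees with one of those finitely many options on each relevant coordinate block. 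For $\sqsubset=\in^*$ the argument is literally the same with ``$f(k)<g(k)$'' replaced by ``$f(k)\in\varphi(k)$'' and the finitely-many-options real replaced by a finite-branching slalom; this is the ``similar argument'' alluded to in the statement.

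Concretely, the steps I would carry out are: (1) fix an enumeration $\langle\dot h_\xi:\xi<\theta\rangle$ in $N$ of all $\Mor_\Uwf$-names in $M$ for reals in $\omega^\omega$ (here $\theta\le(2^{\aleph_0})^M$); (2) recursively build a $\subseteq^*$-decreasing sequence $\langle B_\xi:\xi<\theta\rangle$ of infinite subsets of $\omega$ together with ground-model reals/slaloms $\langle g_\xi:\xi<\theta\rangle$ witnessing all ``branch values'' of $\dot h_\xi$ along $B_\xi$, using the Mathias pure-decision lemma inside $M$ and using $c$'s unboundedness over $M$ to ensure $c\not\sqsubset g_\xi$; (3) let $\Vwf$ be a (non-principal) ultrafilter in $N$ extending $\Uwf\cup\{B_\xi:\xi<\theta\}$, which exists by the finite intersection property; (4) check $\Mor_\Uwf\preceq_M\Mor_\Vwf$, i.e. that every maximal antichain of $\Mor_\Uwf$ in $M$ stays maximal, which follows from the decision property secured in (2) together with the fact that $\Vwf\supseteq\Uwf$; (5) verify $(\star,\Mor_\Uwf,\Mor_\Vwf,M,N,\sqsubset,c)$: given $\dot h\in M$ and a $\Mor_\Vwf$-generic $G$ over $N$, use $\preceq_M$ to see $G\cap\Mor_\Uwf$ is $\Mor_\Uwf$-generic over $M$, observe $\dot h^{G\cap\Mor_\Uwf}$ is, on cofinally many blocks, one of the values coded by some $g_\xi$, and conclude $c\not\sqsubset\dot h^{G}$ from $c\not\sqsubset_n g_\xi$ for infinitely many $n$ via closedness of $\sqsubset_n$.

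**The main obstacle** I expect is step (2): getting the pure-decision recursion and the anti-domination requirement to coexist. One has to be careful that the set $B_\xi$ chosen to secure decision of $\dot h_\xi$ does not accidentally force $\dot h_\xi^G$ to dominate $c$; this is handled by first reading off (inside $M$) the ``worst-case'' ground-model real $g_\xi$ bounding all options for $\dot h_\xi$ along $B_\xi$, then invoking $c\not\sqsubset g_\xi$ — but the order of quantifiers matters, so one must thin $B_\xi$ to a single ``deciding'' set before extracting $g_\xi$, and then note further thinning inside $\Vwf$ only helps. The closed-ness hypothesis on $\sqsubset_n$ is exactly what guarantees that ``$c\not\sqsubset_n$ (finite approximations of $g_\xi$)'' for infinitely many $n$ upgrades to ``$c\not\sqsubset_n\dot h^G_\xi$'' for infinitely many $n$, hence $c\not\sqsubset\dot h^G_\xi$; keeping track that this holds simultaneously for all $\xi$, and that $\Vwf$ is genuinely an ultrafilter (not merely a filter) extending $\Uwf$, completes the proof.
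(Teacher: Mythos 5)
The paper does not give a proof of this lemma; it cites Blass--Shelah and only remarks that the $\in^*$ case ``can be done by a similar argument.'' So I evaluate your attempt on its own. Your plan is considerably more complicated than the argument requires, and the extra machinery is exactly where the gaps lie. Observe first that any set $B$ extracted by pure decision \emph{inside $M$} already lies in $\Uwf$ (it is an infinite set of $M$ compatible with the ultrafilter $\Uwf$ of $M$), so adding your $B_\xi$'s to $\Vwf$ accomplishes nothing; in fact \emph{any} ultrafilter $\Vwf\in N$ extending $\Uwf$ works. The argument should be coordinatewise, with no fusion, no tower, and no topology: fix a stem $s$; by the Prikry (pure decision) property of $\Mor_\Uwf$ in $M$, for each $n$ there is $A'\in\Uwf$ such that $(s,A')$ decides $\dot h(n)$, so $g_s(n):=\min\{k:\exists A'\in\Uwf\ (s,A')\Vdash_{\Mor_\Uwf,M}\dot h(n)=k\}$ is a well-defined element of $M\cap\omega^\omega$. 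If some $(s,A)\in\Mor_\Vwf$ forced $c(n)<\dot h(n)$ for all $n\geq n_0$, pick $n\geq n_0$ with $c(n)\geq g_s(n)$ (possible since $c\not<^*g_s$) and $A'\in\Uwf$ realizing the minimum; then $(s,A\cap A')$ simultaneously forces $c(n)<\dot h(n)$ and $\dot h(n)=g_s(n)\leq c(n)$, a contradiction. For $\in^*$ one uses $\varphi_s(n)=\{k:\exists A'\in\Uwf\ (s,A')\Vdash k\in\dot\varphi(n)\}$, which is a slalom once the width parameter of $\dot\varphi$ has been purely decided. The embedding $\Mor_\Uwf\preceq_M\Mor_\Vwf$ is likewise automatic for any $\Vwf\supseteq\Uwf$, by the Ramsey reduction of maximal antichains of $\Mor_\Uwf$ in $M$ to fronts with side sets in $\Uwf$.

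Against this, your version has four concrete problems. (i) A $\subseteq^*$-decreasing tower $\langle B_\xi:\xi<\theta\rangle$ with $\theta\leq(2^{\aleph_0})^M$ cannot be built in general: at limit ordinals of uncountable cofinality no pseudo-intersection need exist, and you never address limit stages. (ii) Producing a single fusion set $B$ along which $\dot h$ has ``finitely many options per coordinate'' would require a diagonal intersection to land back in $\Uwf$, which fails for non-selective ultrafilters; pure decision only yields one $A'\in\Uwf$ per pair $(s,n)$, not one $B$ for all $n$. (iii) For $\in^*$, even granting finitely many reads of $\dot\varphi(n)$ along $B$, their union can exceed the polynomial slalom-width bound (depending on how fast $B$ thins), so your $g_\xi$ need not be a slalom and $c\not\in^*g_\xi$ would not follow from unboundedness over $M$. (iv) The appeal to closedness of $\sqsubset_n$ is a red herring here; that hypothesis is used for Cohen reals in the matrix construction, while this lemma relies only on compatibility of conditions.
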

We don't know whether the foregoing Lemma holds for $\sqsubset=\pitchfork$ (see Question \ref{mathiasStarPitchfork}), but we can prove a version for Laver forcing with an ultrafilter. Given a filter $\Fwf$ on $[\omega]^\omega$ that contains the cofinite subsets of $\omega$, \emph{Laver forcing with $\Fwf$} is the poset $\Lor_\Fwf$ whose conditions are infinitely-branching subtrees $T$ of $\omega^\omega$ such that $\cj{i<\omega}{\sigma\widehat{\ \ }\langle i\rangle\in T}\in\Fwf$ for any $\sigma\in T$ such that $\sigma\supseteq\mathrm{stem}(T)$, where $\mathrm{stem}(T)$, the \emph{stem of $T$}, is the unique branching node of $T$ of minimal level. The order of $\Lor_\Fwf$ is $\subseteq$. It is well known that this forcing notion is $\sigma$-centered and that adds a dominating real $l_\Fwf$ over the ground model such that $\ran(l_\Fwf)$ is a pseudo-intersection of $\Fwf$.
\begin{lemma}[{\cite[Thm. 9]{blass02}} Pure decision property]\label{PureDec}
   Let $\Uwf$ be a non-principal ultrafilter on $\omega$, $s\in\omega^{<\omega}$ and $\psi$ a formula in the forcing language of $\Lor_\Uwf$. Then, there exists a $T\in\Lor_\Uwf$ such that $\mathrm{stem}(T)=s$ and, either $T\Vdash\psi$ or $T\Vdash\neg\psi$.
\end{lemma}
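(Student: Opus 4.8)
The plan is to prove the pure decision property by a single top-down construction of a condition, in the spirit of the Prikry property, using the ultrafilter $\Uwf$ at each branching node (no fusion sequence is needed). Say that a node $t\in\omega^{<\omega}$ with $t\supseteq s$ \emph{decides} (relative to the fixed $\psi$) if there is $S\in\Lor_\Uwf$ with $\mathrm{stem}(S)=t$ and $S\Vdash\psi$ or $S\Vdash\neg\psi$. I would prove the apparent strengthening that \emph{every} $t\supseteq s$ decides, and then read off the lemma by taking $t=s$ (which also gives stem exactly $s$). Throughout I use the routine fact that if $S\in\Lor_\Uwf$, $G$ is $\Lor_\Uwf$-generic with $S\in G$, and the generic real $\ell$ satisfies $v\subseteq\ell$ for some $v\in S$ with $v\supseteq\mathrm{stem}(S)$, then the restricted tree $S\frestr v=\{w\in S:w\subseteq v\text{ or }w\supseteq v\}$ belongs to $G$; this is a standard density argument whose only special input is that every node of a Laver-type condition is $\subseteq$-comparable with its stem.

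The core is the following \emph{grafting} sublemma: if $t\supseteq s$ and $B:=\{i<\omega:t^\frown\langle i\rangle\text{ decides}\}\in\Uwf$, then $t$ decides. To see this, pick for each $i\in B$ a witness $S_i$ with $\mathrm{stem}(S_i)=t^\frown\langle i\rangle$. Each such $S_i$ forces exactly one of $\psi,\neg\psi$, so $B$ is the disjoint union of $\{i\in B:S_i\Vdash\psi\}$ and $\{i\in B:S_i\Vdash\neg\psi\}$; since $\Uwf$ is an ultrafilter, one of these is in $\Uwf$, and by shrinking $B$ we may assume $S_i\Vdash\psi$ for all $i\in B$. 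Let $S$ be the tree with stem $t$ whose successor set at $t$ is $B$ and which above each $t^\frown\langle i\rangle$ ($i\in B$) coincides with $S_i$. All branching sets of $S$ lie in $\Uwf$, so $S\in\Lor_\Uwf$ with $\mathrm{stem}(S)=t$, and $S\Vdash\psi$: in any generic $G\ni S$ the generic real passes through some $t^\frown\langle i\rangle$ with $i\in B$, hence $S_i=S\frestr(t^\frown\langle i\rangle)\in G$ and so $\psi$ holds. Thus $t$ decides (symmetrically if the $\neg\psi$-side was the one in $\Uwf$).

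Now suppose toward a contradiction that some $t_0\supseteq s$ does not decide. Build a tree $T^*$ top-down with $\mathrm{stem}(T^*)=t_0$, maintaining the invariant that no node $t\supseteq t_0$ of $T^*$ decides: given such a $t$, the grafting sublemma together with the invariant forces $\{i<\omega:t^\frown\langle i\rangle\text{ decides}\}\notin\Uwf$, so its complement $C_t$ lies in $\Uwf$; declare the successor set of $t$ in $T^*$ to be $C_t$, which preserves the invariant. The resulting $T^*$ is a genuine condition in $\Lor_\Uwf$, none of whose nodes above $t_0$ decides. But the set of conditions deciding $\psi$ is dense, so there is $S\leq T^*$ with $S\Vdash\psi$ or $S\Vdash\neg\psi$; since $S\subseteq T^*$ the node $u:=\mathrm{stem}(S)$ is a node of $T^*$ with $u\supseteq t_0$, and $S$ witnesses that $u$ decides, contradicting the invariant. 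Hence every $t\supseteq s$ decides, and $t=s$ gives the statement.

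All the ingredients are elementary. The ultrafilter hypothesis on $\Uwf$ is used precisely at the trichotomy step ($\{i:S_i\Vdash\psi\}\in\Uwf$, $\{i:S_i\Vdash\neg\psi\}\in\Uwf$, or $C_t\in\Uwf$), while the filter property is used only to check that the grafted tree $S$ and the constructed tree $T^*$ are legitimate $\Lor_\Uwf$-conditions. I expect the point that needs the most care in the write-up is the invariant in the top-down construction combined with the density of deciding conditions: that is exactly what converts ``no condition with stem $t_0$ decides $\psi$'' into an outright contradiction, since without it the construction merely produces a condition over $t_0$ on which nothing has been decided.
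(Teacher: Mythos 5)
Your proof is correct. The paper itself does not supply a proof of this lemma; it simply cites Blass's \emph{Selective ultrafilters and homogeneity} (Thm.~9), so there is no in-text argument to compare against. Your two-step organisation --- a grafting sublemma that propagates decision upward through $\Uwf$, followed by the top-down construction of a ``bad tree'' $T^*$ on which no node decides, contradicting the density of deciding conditions --- is one of the standard ways to prove pure decision for tree forcings guided by an ultrafilter (the other common route is an ordinal-rank induction on nodes). The ultrafilter hypothesis enters exactly where it must: in the trichotomy step giving $\{i\in B: S_i\Vdash\psi\}\in\Uwf$ or $\{i\in B: S_i\Vdash\neg\psi\}\in\Uwf$, and in passing to the complement $C_t\in\Uwf$ during the construction of $T^*$. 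The remaining ingredients (the restriction fact $S\frestr v\in G$ when $v\subseteq\ell$, the density of deciding conditions, and that $\mathrm{stem}(S)$ is a node of $T^*$ extending $t_0$ whenever $S\leq T^*$) are all routine and invoked correctly, including the observation that $S_i = S\frestr(t^\frown\langle i\rangle)$ in the grafting step. One small simplification: since the lemma only requires $s$ itself to decide, you could run the contradiction directly with $t_0=s$; proving that every $t\supseteq s$ decides is a harmless but unnecessary strengthening.
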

\begin{lemma}\label{LaverStarPitchfork}
   In $M$, let $\Uwf$ be a non-principal ultrafilter on $\omega$ and, in $N$, let $\Vwf$ be a non-principal ultrafilter on $\omega$ containing $\Uwf$. Then,
   \begin{enumerate}[(a)]
     \item \emph{(Shelah \cite{shelah}, see also {\cite[Lemma 2.1]{brendle03}} and
           {\cite[Lemma 8]{brendle02}})} $\Lor_\Uwf\preceq_M\Lor_\Vwf$.
     \item Let $F\subseteq M$ finite and $\dot{x}\in M$ a $\Lor_\Uwf$-name for a member of $F$. For $s\in\omega^{<\omega}$ let $z_s\in F$ be such that $T\nVdash_{\Lor_\Uwf,M}\dot{x}\neq z_s$ for any $T\in\Lor_\Uwf$ with $\mathrm{stem}(T)=s$. Then, $T\nVdash_{\Lor_\Vwf,N}\dot{x}\neq z_s$ for any $T\in\Lor_\Vwf$ with $\mathrm{stem}(T)=s$.
     \item $(\star,\Lor_\Uwf,\Lor_\Vwf,M,N,\pitchfork,c)$ holds for any $c\in2^\omega$ $\pitchfork$-unbounded over $M$.
   \end{enumerate}
\end{lemma}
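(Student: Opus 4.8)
The plan is to prove the three parts in order, since (b) will rest on the pure decision property (Lemma \ref{PureDec}) and the same kind of tree-surgery that establishes (a), and (c) will be deduced from (b) by the standard Blass--Shelah argument adapted to the relation $\pitchfork$.

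For (a), I would cite the Shelah-style argument (as in \cite[Lemma 2.1]{brendle03} or \cite[Lemma 8]{brendle02}): given a maximal antichain $A$ of $\Lor_\Uwf$ lying in $M$, and a condition $T\in\Lor_\Vwf$, one must find $T'\le T$ in $\Lor_\Vwf$ compatible with some element of $A$. Working below $\mathrm{stem}(T)$, one uses a fusion/amalgamation argument: for each node $\sigma$ extending the stem, the set of immediate successors is in $\Vwf$, hence meets any set in $\Uwf$; one prunes $T$ level by level so that along every branch one eventually lands inside (a node comparable with) a member of $A$, using that $A$, being a maximal antichain \emph{in $M$}, is predense in $\Lor_\Uwf$ and that the relevant "good" sets of successors belong to $\Uwf\subseteq\Vwf$. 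This yields $\Lor_\Uwf\preceq_M\Lor_\Vwf$.

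For (b), fix the finite $F\subseteq M$ and the $\Lor_\Uwf$-name $\dot x$ for a member of $F$. For each $s\in\omega^{<\omega}$, Lemma \ref{PureDec} applied inside $M$ gives, for each $z\in F$, a condition with stem $s$ deciding "$\dot x=z$''; since the $z$'s partition the possibilities and $F$ is finite, by amalgamating these finitely many pure-decision conditions (intersecting the successor sets, which stays in $\Uwf$) one obtains a single $T_s\in\Lor_\Uwf$ with $\mathrm{stem}(T_s)=s$ that decides $\dot x$ outright, say $T_s\Vdash_{\Lor_\Uwf,M}\dot x=z_s$; this $z_s$ is the one named in the statement (it is the unique value not refuted below stem $s$). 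Now take any $T\in\Lor_\Vwf$ with $\mathrm{stem}(T)=s$. Because $T_s\in\Lor_\Uwf\subseteq\Lor_\Vwf$ and both have stem $s$, the set $T\cap T_s$ is again a condition in $\Lor_\Vwf$ with stem $s$ (at each node extending $s$ the two successor sets lie in $\Vwf$, so their intersection does too). By part (a), $T_s$ still forces $\dot x=z_s$ over $N$ (a maximal antichain of $\Lor_\Uwf$ in $M$ below $T_s$ deciding $\dot x$ remains maximal in $\Lor_\Vwf$), so $T\cap T_s\Vdash_{\Lor_\Vwf,N}\dot x=z_s$, whence $T\nVdash_{\Lor_\Vwf,N}\dot x\neq z_s$, as required.

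For (c), fix $c\in 2^\omega$ that is $\pitchfork$-unbounded over $M$ and a $\Lor_\Uwf$-name $\dot h\in M$ for a real in $2^\omega$; we must show $\Vdash_{\Lor_\Vwf,N} c\not\pitchfork\dot h$, i.e. $\exists^\infty k\,(c\restriction I_k=\dot h\restriction I_k)$. Fix $n<\omega$ and a condition $T\in\Lor_\Vwf$; by extending we may assume $|\mathrm{stem}(T)|$ is large and let $s=\mathrm{stem}(T)$. Using (b) with $F=2^{I_k}$ for $k$ the unique index with $\min(I_k)$ past $n$ and $\max(I_k)<|s|$-ish (more precisely, for each relevant block choose $k$ with $I_k$ beyond $n$ and realized within the decision horizon), for each $s$ there is a value $z_s\in 2^{I_k}$ with $T\nVdash\dot h\restriction I_k\neq z_s$; the map $s\mapsto z_s$ lives in $M$. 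Running this over infinitely many blocks $I_k$ and collecting the values $z_s$ produces, inside $M$, a real (or a tree of reals) $g\in 2^\omega$ with $g\restriction I_k=z_s$ for cofinally many $k$; since $c$ is $\pitchfork$-unbounded over $M$, there are infinitely many $k$ with $c\restriction I_k=g\restriction I_k=z_s$, and for such $k$ the condition $T$ (or a further extension preserving stem $s$, obtained via (b)) does not force $\dot h\restriction I_k\neq z_s=c\restriction I_k$, so some $T'\le T$ in $\Lor_\Vwf$ forces $c\restriction I_k=\dot h\restriction I_k$ with $k\ge n$. As $n$ and $T$ were arbitrary, density gives $\Vdash_{\Lor_\Vwf,N}\exists^\infty k\,(c\restriction I_k=\dot h\restriction I_k)$, i.e. $c\not\pitchfork\dot h$.

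The main obstacle is the bookkeeping in (c): one must arrange that the finitely-many-at-a-time decisions furnished by (b) assemble into a \emph{single} $M$-object against which $\pitchfork$-unboundedness of $c$ can be invoked, and that passing from "not refuted'' to "an actual extension forcing equality on block $I_k$'' is legitimate — this is exactly where (b) (rather than mere (a)) is needed, so that the value $z_s$ computed in $M$ is still not refuted by conditions of $\Lor_\Vwf$ with the same stem. Handling the interplay between the stem of $T$ and the horizon up to which $\dot h\restriction I_k$ gets decided, and the fact that $\Lor_\Uwf$ adds a dominating real so these horizons can be pushed out, is the delicate part; everything else (parts (a) and (b), and the density reformulation of $\not\pitchfork$) is routine.
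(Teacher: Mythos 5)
Your strategy matches the paper's for all three parts: (a) cited, (b) via pure decision plus compatibility of same-stem conditions, and (c) via assembling an $M$-real from block-wise undecided values and invoking $\pitchfork$-unboundedness. Parts (a) and (b) are fine. But two things go wrong in your write-up of (c). First, the existence of $z_s\in 2^{I_k}$ with $T\nVdash\dot h\!\frestr\! I_k\neq z_s$ for all $T$ with stem $s$ is \emph{not} supplied by (b); part (b) only transfers such a value from $M$ to $N$, it does not produce it. One needs to establish existence in $M$ separately, and the argument is $\sigma$-centeredness of $\Lor_\Uwf$: conditions with a fixed stem $s$ are closed under finite intersection (the intersection has stem $s$ and the successor sets remain in $\Uwf$), so if every $\sigma\in 2^{I_k}$ were refuted by some condition with stem $s$, intersecting those finitely many conditions would give a condition with stem $s$ forcing $\dot h\!\frestr\! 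I_k\notin 2^{I_k}$, a contradiction. This step is not optional and you should not slide past it.

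Second, the ``decision horizon'' idea --- constraining $k$ so that $I_k$ is ``realized within'' the stem $s$ --- is a red herring; there is no relation between $|s|$ and which blocks $I_k$ can be used. For \emph{every} $s$ and \emph{every} $k$ there is a $\sigma_{s,k}\in 2^{I_k}$ not refuted below stem $s$. The clean construction is: given $T\in\Lor_\Vwf$ with stem $s$ and $n<\omega$, set $x_s:=\bigcup_{k<\omega}\sigma_{s,k}\in 2^\omega\cap M$; since $c$ is $\pitchfork$-unbounded over $M$, there are infinitely many $k$ with $c\!\frestr\! I_k=\sigma_{s,k}$; pick such a $k\geq n$ and use (b) to conclude $T\nVdash_{\Lor_\Vwf,N}\dot h\!\frestr\! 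I_k\neq\sigma_{s,k}$, hence some $T'\leq T$ forces $\dot h\!\frestr\! I_k=c\!\frestr\! I_k$. Note that the undecided value genuinely depends on both $s$ and $k$, which is why a single real $g$ is the wrong object; as you parenthetically suspected, you need a family $\{x_s\}_{s\in\omega^{<\omega}}$ in $M$, and that is harmless because $\pitchfork$-unboundedness of $c$ is applied to one $x_s$ at a time.
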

\begin{proof}
  \begin{enumerate}[(a)]
  \setcounter{enumi}{1}
      \item In $N$, let $T\in\Lor_\Vwf$ with $\mathrm{stem}(T)=s$. In $M$ find, by Lemma \ref{PureDec}, $T'\in\Lor_\Uwf$ with $\mathrm{stem}(T')=s$ that decides the formula \lcom$\dot{x}=z_s$\rcom. By hypothesis, it is clear that $T'\Vdash_{\Lor_\Uwf,M}\dot{x}=z_s$. Now, in $N$, it is easy to see that $T'\Vdash_{\Lor_\Vwf,N}\dot{x}=z_s$ and, as $T'$ and $T$ have the same stem, they are compatible in $\Lor_\Vwf$, so $T\cap T'\Vdash_{\Lor_\Vwf,N}\dot{x}=z_s$.
      \item The idea of this proof is taken from \cite[Lemma $1^*$]{brendle} (see Lemma \ref{centeredFork}). Let $\dot{x}\in M$ be a $\Lor_\Uwf$-name for a real in $2^\omega$. For each $n<\omega$ and $s\in\omega^{<\omega}$, find a $\sigma_{s,n}\in 2^{I_n}$ such that
          $T\nVdash_{\Lor_\Uwf,M}\dot{x}\frestr I_n\neq\sigma_{s,n}$ for any $T\in\Lor_\Uwf\cap M$ with $\mathrm{stem}(T)=s$ (this can be found because $\Lor_\Uwf$ is $\sigma$-centered). Now, in $N$, $T\nVdash_{\Lor_\Vwf,N}\dot{x}\frestr I_n\neq\sigma_{s,n}$ for any $T\in\Lor_\Vwf$ with $\mathrm{stem}(T)=s$. Let $x_s=\bigcup_{n<\omega}\sigma_{s,n}\in2^\omega\cap M$, so $c\not\pitchfork x_s$ for any $s\in\omega^{<\omega}$. It is easy to see that $\Vdash_{\Lor_\Vwf,N}c\not\pitchfork\dot{x}$.
  \end{enumerate}
\end{proof}

In relation with the preservation property of Definition \ref{DefPlusProp}, we have the following.
\begin{lemma}[{\cite[Thm. 7]{mejia}}]\label{suslinStar}
   Let $\Por$ be a Suslin ccc forcing notion with parameters in $M$. If $(+_{\Por,\sqsubset})$ holds in $M$,
   then $(\star,\Por^M,\Por^N,M,N,\sqsubset,c)$ holds.
\end{lemma}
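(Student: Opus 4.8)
The plan is to extract from $(+_{\Por,\sqsubset})$, working inside $M$, a countable ``predictor'' set of reals that decides $\not\sqsubset\dot{h}$, and then to transfer the statement ``this set works'' from $M$ to $N$ by absoluteness, using that a Suslin ccc poset together with its (restricted) forcing relation are absolutely definable.

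First I would recall why $\Por^M\preceq_M\Por^N$, which is needed even to make sense of $(\star,\Por^M,\Por^N,M,N,\sqsubset,c)$: since $\Por$ is ccc, every maximal antichain of $\Por^M$ lying in $M$ is countable there, and for a real-coded countable set $A$ the assertion ``$A$ is a maximal antichain of $\Por$'' is arithmetic over the $\Sigma^1_1$ predicates (with parameters in $M$) defining membership in $\Por$, the order $\leq_\Por$ and incompatibility $\perp_\Por$, hence absolute between $M$ and $N$; so such an $A$ is still a maximal antichain of $\Por^N$. Next I would fix a $\Por$-name $\dot{h}\in M$ for a real in $\omega^\omega$ and, using the ccc, replace it in $M$ by an equivalent nice name coded by a real $d\in M$ (listing, for each $n<\omega$, a countable maximal antichain of $\Por^M$ and the values it forces for $\dot{h}(n)$). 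By the previous paragraph $d$ codes a nice $\Por^N$-name as well, and for any $\Por^N$-generic $G$ over $N$ it evaluates to $\dot{h}[G\cap\Por^M]$; so establishing $\Vdash_{\Por^N,N}c\not\sqsubset\dot{h}$ for the name coded by $d$ is exactly establishing $(\star,\Por^M,\Por^N,M,N,\sqsubset,c)$.

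Working in $M$, I would apply $(+_{\Por,\sqsubset})$ to $\dot{h}$ to get a countable $Y\in M$, $Y\subseteq\omega^\omega$, such that every $f\in\omega^\omega$ that is $\sqsubset$-unbounded over $Y$ satisfies $\Vdash_{\Por^M}f\not\sqsubset\dot{h}$; fix $y\in M$ coding $Y$. Now consider $\Phi(d,y)$: \lcom for all $f\in\omega^\omega$, if $f$ is $\sqsubset$-unbounded over $Y_y$ then $\Vdash_\Por f\not\sqsubset\dot{h}_d$\rcom. The premise is arithmetic in $(f,y)$, since each $\sqsubset_n$ is a closed, arithmetically defined relation (so $f\sqsubset_n g$ iff $(f\frestr k,g\frestr k)$ stays in the defining tree for all $k$) and $Y_y$ is a real-coded countable set. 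The conclusion $\Vdash_\Por f\not\sqsubset\dot{h}_d$ expands, via this tree description, the nice-name decoding of $d$, and the $\Sigma^1_1$-definability of $\Por,\leq_\Por,\perp_\Por$, into a statement of bounded projective complexity in $(f,d)$ which, by the absoluteness machinery for Suslin ccc forcing (see \cite{barju}) together with Shoenfield absoluteness, is absolute between $M$ and $N$. Hence $\Phi(d,y)$, which holds in $M$ by the choice of $Y$, holds in $N$.

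Finally, $c$ is $\sqsubset$-unbounded over $M$ and $Y\subseteq M$, so $N$ sees $c$ as $\sqsubset$-unbounded over $Y$; applying $\Phi(d,y)$ in $N$ with $f=c$ gives $\Vdash_{\Por^N,N}c\not\sqsubset\dot{h}$, which is the desired $(\star,\Por^M,\Por^N,M,N,\sqsubset,c)$. I expect the main obstacle to be pinning down the italicized complexity/absoluteness claim --- that for a nice name $\dot{h}$ of a Suslin ccc poset, ``$\Vdash_\Por\varphi(\dot{h})$'' with $\varphi$ arithmetic is of bounded complexity and hence absolute between $M$ and $N$; this is the standard Suslin-forcing absoluteness, and the remainder is bookkeeping with Context \ref{ContextUnbd} and Definitions \ref{DefPlusProp}, \ref{DefCompSubordM} and \ref{DefPresUnbdg}. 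Alternatively one can argue more forcing-theoretically, passing to $N[G]$ for a $\Por^N$-generic $G$ and using absoluteness of the closed relations $\sqsubset_n$ between $M[G\cap\Por^M]$ and $N[G]$, but the essential input is unchanged.
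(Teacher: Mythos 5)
Your argument is correct and reconstructs what is certainly the intended proof behind the citation to \cite[Thm.~7]{mejia}: apply $(+_{\Por,\sqsubset})$ inside $M$ to extract a real-coded countable $Y$, observe that the universally quantified statement \lcom every $f$ that is $\sqsubset$-unbounded over $Y$ satisfies $\Vdash_\Por f\not\sqsubset\dot{h}$\rcom\ has low projective complexity once $\dot{h}$ is replaced by a nice name coded by $d$ and $\Por,\leq_\Por,\perp_\Por$ are replaced by their $\Sigma^1_1$ definitions, transfer it to $N$ by absoluteness, and instantiate at $f=c$. The only place to tighten is the complexity bookkeeping: the forcing relation for the $\Sigma^0_2$ formula $f\sqsubset\dot{h}_d$ over a Suslin ccc poset, read off from the countable antichains in $d$ and the $\Sigma^1_1$ incompatibility relation, is $\Sigma^1_1$, so $\Vdash_\Por f\not\sqsubset\dot{h}_d$ is $\Pi^1_1$ and your $\Phi(d,y)$ is $\Pi^1_1$; hence Mostowski ($\Sigma^1_1/\Pi^1_1$) absoluteness already suffices, with Shoenfield invoked only as a safety net. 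Likewise \lcom $A$ is a maximal antichain of $\Por$\rcom\ is not literally \lcom arithmetic\rcom\ over the defining predicates --- it is a conjunction of a $\Sigma^1_1$ (antichain) and a $\Pi^1_1$ (maximality) statement --- but both conjuncts are Mostowski-absolute, so your conclusion $\Por^M\preceq_M\Por^N$ stands. These are presentational imprecisions, not gaps.
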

As a last example, we have
\begin{lemma}[Brendle and Fischer, {\cite[Lemma 11]{BF}}]\label{fixedStar}
   For a forcing notion $\Por\in M$, $(\star,\Por,\Por,M,N,\sqsubset,c)$ holds.
\end{lemma}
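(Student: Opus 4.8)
The plan is to argue by contradiction inside $N$, using crucially that $\Por$ is \emph{literally the same set} in $M$ and in $N$ (it is a fixed element of $M$, not a poset given by a formula), so that the forcing relation for finitary statements about $\dot h$, and the ``tree of possible values'' of $\dot h$ below a condition, are computed identically in the two models.

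First I would note that $\Por\preceq_M\Por$ is immediate: if $G$ is $\Por$-generic over $N$ then, since $M\subseteq N$ and density (equivalently, predensity) of a subset of $\Por$ is upward absolute, $G$ is $\Por$-generic over $M$ as well, and $G\cap\Por=G$; so it is legitimate to ask whether $(\star,\Por,\Por,M,N,\sqsubset,c)$ holds. Now fix a $\Por$-name $\dot h\in M$ for a real in $\omega^\omega$ and suppose, towards a contradiction, that some condition forces $c\sqsubset\dot h$. Since $\sqsubset=\bigcup_{n<\omega}\sqsubset_n$, I may pass to a condition $p\in\Por$ and an $n<\omega$ with $p\Vdash_{\Por,N}c\sqsubset_n\dot h$.

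The key step is to exploit that $\sqsubset_n$ is a closed arithmetical relation: fix $R_n\subseteq\bigcup_{k<\omega}(\omega^k\times\omega^k)$, arithmetically definable from $\sqsubset_n$ and hence an element of $M$ interpreted identically in $M$ and $N$, such that $f\sqsubset_n g\sii\forall_{k<\omega}\big((f\frestr k,g\frestr k)\notin R_n\big)$. For each $k<\omega$ put $T_k:=\cj{t\in\omega^k}{\exists q\leq p\,(q\Vdash_\Por\dot h\frestr k=t)}$, the set of values of $\dot h\frestr k$ left open by $p$. Because $\Por,p,\dot h\in M$ and the forcing relation for ``$\dot h\frestr k=t$'' is absolute between $M$ and $N$, each $T_k$ is an element of $M$ and is the same set whether computed in $M$ or in $N$; moreover $T:=\bigcup_{k<\omega}T_k$ is a subtree of $\omega^{<\omega}$ lying in $M$ with $p\Vdash_\Por$ ``$\dot h\in[T]$'', so $[T]\neq\emptyset$ (well-foundedness of trees being absolute and preserved by forcing), and I may pick a branch $g\in[T]\cap M$. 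To close the loop I would observe that $p\Vdash_{\Por,N}c\sqsubset_n\dot h$ forces each conjunct, so $p\Vdash_{\Por,N}(c\frestr k,\dot h\frestr k)\notin R_n$ for every $k<\omega$; hence for any $t\in T_k$, choosing $q\leq p$ with $q\Vdash_\Por\dot h\frestr k=t$, the condition $q$ forces the name-free statement ``$(c\frestr k,t)\notin R_n$'', which is therefore simply true. Taking $t=g\frestr k$ for all $k$ yields $c\sqsubset_n g$ with $g\in M$, contradicting that $c$ is $\sqsubset$-unbounded over $M$. Hence no condition forces $c\sqsubset\dot h$, i.e. $\Vdash_{\Por,N}c\not\sqsubset\dot h$.

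The hard part is not any calculation but getting the absoluteness exactly right: the whole argument hinges on the fact that forcing with the \emph{set} $\Por$ over $N$ cannot ``use'' reals of $N\setminus M$ to decide values of the $M$-name $\dot h$, so the tree of possibilities of $\dot h$ below $p$ already lies in $M$. That is precisely what lets me replace the extension real $\dot h[G]$, over which $c$ need not be $\sqsubset$-unbounded, by a genuine ground-model real $g\in M$, over which it is; and the closedness of each $\sqsubset_n$ is what turns ``$c\sqsubset_n\dot h$'' into the countably many finitary constraints recorded by $R_n$ along the branches of that tree.
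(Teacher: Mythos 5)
Your proof is correct and is the expected argument behind \cite[Lemma 11]{BF}: because $\Por$, $p$ and $\dot{h}$ are fixed elements of $M$, the tree $T$ of finite restrictions of $\dot{h}$ left open below $p$ already lies in $M$ (the forcing relation $q\Vdash_\Por \dot{h}\frestr k=\check{t}$ being a $\Delta_1$-definable, hence absolute, statement about the literal set $\Por$ and the names in $M$), downward absoluteness of ill-foundedness for trees on $\omega$ yields a branch $g\in[T]\cap M$, and closedness of $\sqsubset_n$ together with the fact that $c\frestr k$ and $t$ are finite objects (hence in $M$, and the statement $(c\frestr k,t)\notin R_n$ is decided outright by every condition) gives $c\sqsubset_n g$, contradicting that $c$ is $\sqsubset$-unbounded over $M$. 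The two absoluteness ingredients you flag as the crux are indeed exactly where the hypothesis ``$\Por\in M$ is a fixed poset, not a definable class'' is used, and both are applied correctly.
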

Finally, unbounded reals are preserved in fsi, as established by these last two results.
\begin{lemma}[{\cite[Lemmas 10 and 13]{BF}}]\label{CompSubordIt}
   Let $\delta$ be an ordinal in $M$, $\Por_{0,\delta}=\langle\Por_{0,\alpha},\Qnm_{0,\alpha}\rangle_{\alpha<\delta}$
     a fsi of posets defined in $M$ and $\Por_{1,\delta}=\langle\Por_{1,\alpha},\Qnm_{1,\alpha}\rangle_{\alpha<\delta}$
     a fsi of posets defined in $N$. Then, $\Por_{0,\delta}\preceq_M\Por_{1,\delta}$ iff, for every $\alpha<\delta$,
     $\Vdash_{\Por_{1,\alpha},N}\Qnm_{0,\alpha}\preceq_{M^{\Por_{0,\alpha}}}\Qnm_{1,\alpha}$.
\end{lemma}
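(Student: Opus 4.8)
The plan is to prove the biconditional by treating the two directions separately: for the nontrivial direction $(\Leftarrow)$ I would induct on $\delta$, while for $(\Rightarrow)$ no induction is needed once a ``restriction'' lemma is available. Throughout I would use two standard facts about any finite support iteration, valid in any model of $\thzfc$: first, for $\alpha\le\beta$, $\Por_{i,\alpha}$ is completely embedded in $\Por_{i,\beta}$ (with reduction map $q\mapsto q\frestr\alpha$); second, at a limit stage every condition has its support below some earlier stage, so the iteration is the increasing union of the previous ones. I would also use the elementary remark that $\Por\preceq_M\Qor$ makes incompatibility of conditions of $\Por$ absolute between $\Por$ and $\Qor$: given $p\perp_\Por p'$, extend $\{p,p'\}$ in $M$ to a maximal antichain, which by hypothesis remains an antichain of $\Qor$.

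The argument rests on two auxiliary lemmas. First, a \emph{restriction lemma}: if $\Por_{0,\delta}\preceq_M\Por_{1,\delta}$ and $\alpha\le\delta$, then $\Por_{0,\alpha}\preceq_M\Por_{1,\alpha}$. Inclusion is clear; given a maximal antichain $A\in M$ of $\Por_{0,\alpha}$, complete embeddability in $M$ makes $A$ a maximal antichain of $\Por_{0,\delta}$ in $M$, hence of $\Por_{1,\delta}$ by hypothesis, and complete embeddability of $\Por_{1,\alpha}$ in $\Por_{1,\delta}$ in $N$ (absoluteness of compatibility for members of $\Por_{1,\alpha}$) then shows $A$ is already maximal in $\Por_{1,\alpha}$. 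Second, a \emph{two-step lemma}: for $\Por_0\in M$, $\Por_1\in N$ with $\Por_0\preceq_M\Por_1$ and names $\Qnm_0\in M$, $\Qnm_1\in N$ for posets with $\Qnm_0$ forced to be a subposet of $\Qnm_1$, one has $\Por_0\ast\Qnm_0\preceq_M\Por_1\ast\Qnm_1$ iff $\Vdash_{\Por_1,N}\Qnm_0\preceq_{M^{\Por_0}}\Qnm_1$. This is the usual correspondence between maximal antichains of a two-step iteration and $\Por_0$-names for maximal antichains of the iterand, run relative to $M$: if $G_1$ is $\Por_1$-generic over $N$ then $G_0:=G_1\cap\Por_0$ is $\Por_0$-generic over $M$ (this is exactly the point of $\Por_0\preceq_M\Por_1$), a maximal antichain $\bar A\in M$ of $\Por_0\ast\Qnm_0$ induces the maximal antichain $\{\dot q[G_0]:(p,\dot q)\in\bar A,\ p\in G_0\}$ of $\Qnm_0[G_0]$ in $M[G_0]$ and conversely, and one then converts ``this stays maximal in $\Qnm_1[G_1]$ over $N[G_1]$'' into ``$\bar A$ stays maximal in $\Por_1\ast\Qnm_1$ over $N$'' and back, checking antichain-ness via the incompatibility remark.

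With these, $(\Rightarrow)$ is immediate: for $\alpha<\delta$ the restriction lemma applied at $\alpha$ and at $\alpha+1$ gives $\Por_{0,\alpha}\preceq_M\Por_{1,\alpha}$ and $\Por_{0,\alpha+1}\preceq_M\Por_{1,\alpha+1}$, and the two-step lemma applied to $\Por_{i,\alpha+1}=\Por_{i,\alpha}\ast\Qnm_{i,\alpha}$ yields $\Vdash_{\Por_{1,\alpha},N}\Qnm_{0,\alpha}\preceq_{M^{\Por_{0,\alpha}}}\Qnm_{1,\alpha}$. For $(\Leftarrow)$ I would induct on $\delta$. The case $\delta=0$ is trivial, and for $\delta=\gamma+1$ the inductive hypothesis gives $\Por_{0,\gamma}\preceq_M\Por_{1,\gamma}$, so the two-step lemma with the stagewise hypothesis at $\gamma$ gives $\Por_{0,\gamma+1}\preceq_M\Por_{1,\gamma+1}$.

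The limit case of $(\Leftarrow)$ is the crux, and is where the finite support structure is genuinely used. Let $\delta$ be a limit with $\Por_{0,\alpha}\preceq_M\Por_{1,\alpha}$ for all $\alpha<\delta$; then $\Por_{0,\delta}=\bigcup_{\alpha<\delta}\Por_{0,\alpha}\subseteq\Por_{1,\delta}$. Let $A\in M$ be a maximal antichain of $\Por_{0,\delta}$. The obstacle is that $A$ need not be contained in any single $\Por_{0,\alpha}$, so the inductive hypothesis cannot simply be quoted. Given $q\in\Por_{1,\delta}$ with $\mathrm{supp}(q)\subseteq\alpha<\delta$, I would show $q$ is compatible in $\Por_{1,\delta}$ with some $p\in A$: in $M$, $D:=\{p\frestr\alpha:p\in A\}$ is predense in $\Por_{0,\alpha}$ (restrict to $\alpha$ a common extension in $\Por_{0,\delta}$ of a given $s\in\Por_{0,\alpha}$ with some $p\in A$), so refining $D$ to a maximal antichain of $\Por_{0,\alpha}$ in $M$ and using the inductive hypothesis at $\alpha$ shows $D$ is still predense in $\Por_{1,\alpha}$; pick $p\in A$ and $r\in\Por_{1,\alpha}$ with $r\le q$ and $r\le p\frestr\alpha$, fix $\beta<\delta$ with $\mathrm{supp}(p)\subseteq\beta$ and $\beta\ge\alpha$, and amalgamate by $r'\frestr\alpha=r$ and $r'(\xi)=p(\xi)$ for $\xi\in[\alpha,\beta)$. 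Using the inductive hypothesis at each $\xi<\beta$ together with the fact that $\Qnm_{0,\xi}$ is forced to be a subposet of $\Qnm_{1,\xi}$, one checks $r'\in\Por_{1,\beta}\subseteq\Por_{1,\delta}$ and $r'\le p$, $r'\le q$. Hence $A$ is predense in $\Por_{1,\delta}$; it remains an antichain there by the incompatibility remark applied at a stage bounding the supports of any two of its members. Therefore $\Por_{0,\delta}\preceq_M\Por_{1,\delta}$, completing the induction.
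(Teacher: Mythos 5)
Your proof is correct and follows essentially the same route as the cited source \cite{BF}: the two auxiliary lemmas you isolate (the two-step lemma and the restriction lemma) correspond to Brendle--Fischer's Lemma~10, and your limit-stage amalgamation via $D=\{p\!\restriction\!\alpha:p\in A\}$ and the condition $r'$ built from $r$ and $p\!\restriction\![\alpha,\beta)$ is the argument of their Lemma~13. The paper itself gives no proof, only the citation, and your reconstruction matches it.
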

\begin{theorem}[Blass and Shelah, \cite{blsh}, {\cite[Lemma 12]{BF}}]\label{PreservStar}
   With the notation in Lemma \ref{CompSubordIt}, assume that $\Por_{0,\delta}\preceq_M\Por_{1,\delta}$. Then,
   $(\star,\Por_{0,\delta},\Por_{1,\delta},M,N,\sqsubset,c)$ holds iff, for every $\alpha<\delta$,
   \[\Vdash_{\Por_{1,\alpha},N}(\star,\Qnm_{0,\alpha},\Qnm_{1,\alpha},M^{\Por_{0,\alpha}},N^{\Por_{1,\alpha}},\sqsubset,c).\]
\end{theorem}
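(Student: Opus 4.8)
I will establish both directions of Theorem~\ref{PreservStar}; the implication from right to left is the substantial one and goes by induction on $\delta$.

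\emph{From left to right} (no induction). Suppose the right-hand side fails, say $\Vdash_{\Por_{1,\alpha},N}(\star,\Qnm_{0,\alpha},\Qnm_{1,\alpha},M^{\Por_{0,\alpha}},N^{\Por_{1,\alpha}},\sqsubset,c)$ fails for some $\alpha<\delta$. Unravelling Definition~\ref{DefPresUnbdg} and using the maximal principle, I can gather a condition $p_0\in\Por_{1,\alpha}$, a $\Por_{0,\alpha+1}$-name $\dot h\in M$ for a real, and a $\Por_{1,\alpha}$-name $\dot q$ for a condition of $\Qnm_{1,\alpha}$ so that, with $p=(p_0,\dot q)\in\Por_{1,\alpha+1}$, we have $p\Vdash_{\Por_{1,\alpha+1},N}c\sqsubset\dot h$. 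Since each $\sqsubset_n$ is closed in the arithmetical sense, ``$c\sqsubset\dot h$'' is $\Sigma^0_2$ in $c$, $\dot h$ and the ground-model trees coding $\langle\sqsubset_n\rangle_{n<\omega}$, hence upward absolute; as $\Por_{1,\alpha+1}$ is completely embedded in $\Por_{1,\delta}$, we get $p\Vdash_{\Por_{1,\delta},N}c\sqsubset\dot h$. Finally $\Por_{0,\alpha+1}$ is a complete suborder of $\Por_{0,\delta}$, so $\dot h$ is also a $\Por_{0,\delta}$-name for a real, and this contradicts $(\star,\Por_{0,\delta},\Por_{1,\delta},M,N,\sqsubset,c)$.

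\emph{From right to left, by induction on $\delta$.} For $\delta=0$ the statement is just the standing hypothesis that $c$ is $\sqsubset$-unbounded over $M$. For $\delta=\gamma+1$: given a $\Por_{0,\gamma+1}$-name $\dot h\in M$ for a real, force with $\Por_{1,\gamma}$ over $N$ to get $G_{1,\gamma}$, hence the $\Por_{0,\gamma}$-generic $G_{0,\gamma}=G_{1,\gamma}\cap\Por_{0,\gamma}$ over $M$ (legitimate by $\Por_{0,\gamma}\preceq_M\Por_{1,\gamma}$, which comes from Lemma~\ref{CompSubordIt} and the hypothesis $\Por_{0,\delta}\preceq_M\Por_{1,\delta}$). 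Reinterpreting $\dot h$ over $G_{0,\gamma}$ gives a $\Qnm_{0,\gamma}[G_{0,\gamma}]$-name in $M[G_{0,\gamma}]=M^{\Por_{0,\gamma}}$ for a real; Lemma~\ref{CompSubordIt} also gives $\Qnm_{0,\gamma}[G_{0,\gamma}]\preceq_{M[G_{0,\gamma}]}\Qnm_{1,\gamma}[G_{1,\gamma}]$, so the instance of $(\star,\Qnm_{0,\gamma},\Qnm_{1,\gamma},M^{\Por_{0,\gamma}},N^{\Por_{1,\gamma}},\sqsubset,c)$ supplied by the hypothesis applies to it and gives $\Vdash_{\Qnm_{1,\gamma}[G_{1,\gamma}],N[G_{1,\gamma}]}c\not\sqsubset\dot h$ (the reinterpreted name); as $G_{1,\gamma}$ was arbitrary and $\Por_{1,\gamma+1}=\Por_{1,\gamma}\ast\Qnm_{1,\gamma}$, this yields $\Vdash_{\Por_{1,\gamma+1},N}c\not\sqsubset\dot h$. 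For $\delta$ a limit of uncountable cofinality (the iterands being ccc in our applications), a nice $\Por_{0,\delta}$-name for a real is equivalent to a $\Por_{0,\alpha}$-name for some $\alpha<\delta$; if some $q\in\Por_{1,\delta}$ forced $c\sqsubset_n\dot h$ then, $q$ having finite support, $q\in\Por_{1,\gamma}$ for some $\gamma\in[\alpha,\delta)$, and since ``$c\sqsubset_n\dot h$'' involves only $\Por_{1,\gamma}$-objects and $\Por_{1,\gamma}$ is completely embedded in $\Por_{1,\delta}$, already $q\Vdash_{\Por_{1,\gamma},N}c\sqsubset_n\dot h$, contradicting the induction hypothesis $(\star,\Por_{0,\gamma},\Por_{1,\gamma},M,N,\sqsubset,c)$ applied to $\dot h$.

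\emph{The crux: $\cf\delta=\omega$.} Fix $\langle\delta_k\rangle_{k<\omega}$ increasing and cofinal in $\delta$ with $\delta_0=0$. Now a $\Por_{0,\delta}$-name $\dot h$ for a real need not be concentrated below any $\delta_k$, so reflection is unavailable and one must run a rank argument along $\langle\delta_k\rangle_{k<\omega}$, in the style of the classical finite-support preservation theorem of Judah and Shelah (Theorem~\ref{preservPlus}; see also \cite[Thm.~6.4.12]{barju}) and its matrix-iteration refinement in \cite{blsh} and \cite[Lemma~12]{BF}. Assuming toward a contradiction that $p\in\Por_{1,\delta}$ and $n<\omega$ satisfy $p\Vdash_{\Por_{1,\delta},N}c\sqsubset_n\dot h$ (with $p$ supported below some $\delta_{k_0}$), one builds recursively along the $\delta_k$'s a decreasing net of conditions together with ever longer initial segments of $\dot h$, at each block $[\delta_k,\delta_{k+1})$ using the induction hypothesis for the iteration of that block (which has length $<\delta$, the hypotheses descending to it again by Lemma~\ref{CompSubordIt}) together with the single-step hypotheses $(\star,\Qnm_{0,\alpha},\Qnm_{1,\alpha},\dots)$. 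The closedness of $\sqsubset_n$ (presenting it as the branches of a tree) and the nowhere-density of each $(\sqsubset_n)^g$ are what let the local $(\star)$'s --- which keep $c$ $\sqsubset$-unbounded over ever larger portions of the $\Por_0$-side --- be organized into an ordinal-valued rank that strictly drops from one block to the next, hence terminating after finitely many blocks in a condition $\le p$ which forces $(c\frestr m,\dot h\frestr m)$ off the tree of $\sqsubset_n$ for some $m$, i.e.\ forces $c\not\sqsubset_n\dot h$ --- contradicting $p\Vdash c\sqsubset_n\dot h$. Therefore $\Vdash_{\Por_{1,\delta},N}c\not\sqsubset\dot h$ and the induction is complete. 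I expect this rank analysis, run simultaneously over $M$ (where the $\Por_0$-side names live) and $N$ (where $c$ and the $\Por_1$-generic live), to be the main obstacle; everything else is routine bookkeeping with complete suborders via Lemma~\ref{CompSubordIt}.
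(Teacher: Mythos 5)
The paper does not prove Theorem~\ref{PreservStar}; it attributes the result to Blass--Shelah and to Brendle--Fischer (\cite[Lemma~12]{BF}), so there is no internal proof to compare against. Assessing your argument on its own terms: the left-to-right direction is essentially fine (though you should say a word about how a $\Por_{1,\alpha}$-name for a ``$\Qnm_{0,\alpha}$-name in $M^{\Por_{0,\alpha}}$'' gets converted into a fixed $\Por_{0,\alpha+1}$-name in $M$ --- you need to go to a generic, recover the name in $M$, and then pull back a condition deciding it); the successor step and the $\cf\delta>\omega$ limit step of the converse are correct, with the caveat that concentration of a name below some $\alpha<\delta$ uses ccc-ness, which is implicit throughout this paper but should be flagged as a hypothesis.

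The genuine gap is the $\cf\delta=\omega$ case, which you yourself identify as ``the crux'' and then do not prove. What you write --- recursively descending along $\langle\delta_k\rangle$, tracking ``ever longer initial segments of $\dot h$,'' and invoking an unspecified ``ordinal-valued rank that strictly drops from one block to the next'' --- is a narrative, not an argument. You never say what the rank is, what it is a rank of, or why the block-wise $(\star)$'s make it drop. In particular, the tree $T_n$ presenting $\sqsubset_n$ is ill-founded (its branches are exactly the pairs in $\sqsubset_n$), so ``the closedness of $\sqsubset_n$'' does not hand you a well-founded rank for free; and nowhere-density of the sections $(\sqsubset_n)^g$ is a density statement in the fiber, not a descent statement in the tree. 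There is also a structural obstacle you gloss over: for a $\Por_{0,\delta}$-name $\dot h$ at a limit of countable cofinality, neither the name $\dot h$ nor the tail iteration lives at an earlier stage, so the induction hypothesis does not straightforwardly descend to the object you are trying to control; the proofs in \cite{blsh} and \cite[Lemma~12]{BF} handle this with a direct argument exploiting the finite-support direct-limit structure and ccc-ness, not by a rank. You should reconstruct that argument (or cite it as the paper does) rather than gesture at a rank you have not defined.
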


\section{Matrix iterations}\label{SecMatIt}

Throughout this section, we work in a model $V$ of $\thzfc$. Fix two ordinals $\delta$ and $\gamma$.
\begin{definition}[Blass and Shelah, \cite{blsh}, \cite{BF} and \cite{mejia}]\label{DefMatrixIt}
A \emph{matrix iteration of ccc posets} is given by $\Por_{\delta,\gamma}=\langle\langle\Por_{\alpha,\xi},\Qnm_{\alpha,\xi}
\rangle_{\xi<\gamma}\rangle_{\alpha\leq\delta}$ with the following conditions.
\begin{enumerate}[(1)]
   \item $\Por_{\delta,0}=\langle\Por_{\alpha,0},\Rnm_{\alpha}\rangle_{\alpha<\delta}$ is a fsi of ccc posets.
   \item For all $\alpha\leq\delta$, $\langle\Por_{\alpha,\xi},\Qnm_{\alpha,\xi}
         \rangle_{\xi<\gamma}$ is a fsi of ccc posets
   \item For all $\xi<\gamma$ and $\alpha<\beta\leq\delta$,
         $\Vdash_{\Por_{\beta,\xi}}\Qnm_{\alpha,\xi}\preceq_{V^{\Por_{\alpha,\xi}}}\Qnm_{\beta,\xi}$.
\end{enumerate}
By Lemma \ref{CompSubordIt}, condition (3) is equivalent to saying that $\Por_{\alpha,\xi}$ is a complete suborder of $\Por_{\beta,\gamma}$
for every $\alpha<\beta\leq\delta$ and $\xi\leq\gamma$.\\
In the context of matrix iterations, when $\alpha\leq\delta$, $\xi\leq\gamma$ and $G_{\alpha,\xi}$ is $\Por_{\alpha,\xi}$-generic over $V$, we
denote
$V_{\alpha,\xi}=V[G_{\alpha,\xi}]$. Note that $V_{0,0}=V$.\\
Figure \ref{fig:2} shows the form in which we think of a matrix iteration. The iteration defined in (1) is represented by the leftmost vertical
iteration and, at each $\alpha$-stage of this iteration ($\alpha\leq\delta$), a horizontal iteration is performed as it is represented in (2).
\begin{figure}
\begin{center}
  \includegraphics[scale=0.48]{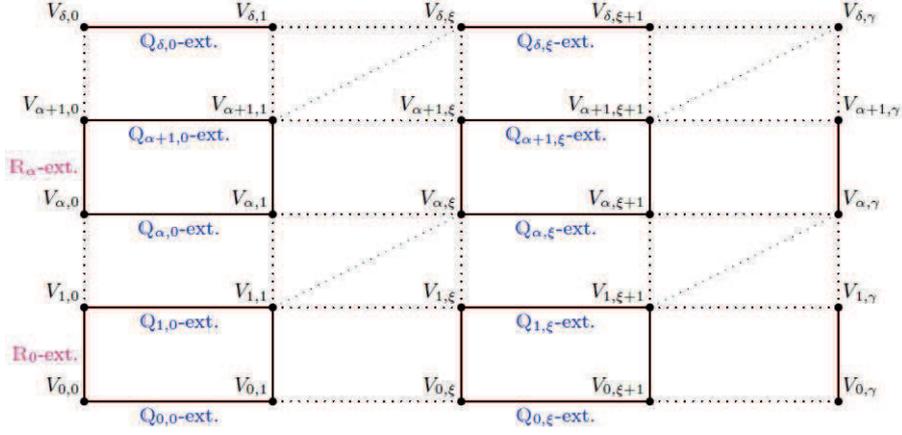}
\caption{Matrix iteration} \label{fig:2}
\end{center}
\end{figure}
\end{definition}
The construction of the matrix iterations for the models in Section \ref{SecAppl} corresponds to the following particular case, which we
fix from now on.
\begin{context}\label{ContextMatrix}
   Put $\gamma=S\cup U\cup L\cup T$ as a disjoint union and fix a function $\Delta:T\to\delta$. For $\xi\in S$ fix $\Sor_\xi$ a Suslin ccc poset with parameters in $V$.
   Define the matrix iteration $\Por_{\delta,\gamma}=\langle\langle\Por_{\alpha,\xi},\Qnm_{\alpha,\xi}
   \rangle_{\xi<\gamma}\rangle_{\alpha\leq\delta}$ as follows.
   \begin{enumerate}[(1)]
       \item $\Por_{\delta,0}=\langle\Por_{\alpha,0},\Cnm\rangle_{\alpha<\delta}$ (fsi of Cohen forcing).
       \item For a fixed $\xi<\gamma$, $\Qnm_{\alpha,\xi}$ is defined for all $\alpha\leq\delta$ according to one of the following cases.
   \begin{enumerate}[(i)]
       \item For $\xi\in S$, $\Qnm_{\alpha,\xi}=\Snm_\xi$ as a $\Por_{\alpha,\xi}$-name of $\Sor_\xi$.
       \item For $\xi\in U$ define, by recursion on $\alpha\leq\delta$, a $\Por_{\alpha,\xi}$-name $\dot{\Uwf}_{\alpha,\xi}$
             of a non-principal ultrafilter on $\omega$ such that
             \begin{itemize}
                \item For $\alpha<\beta\leq\delta$, $\Vdash_{\beta,\xi}\textrm{\lcom} \dot{\Uwf}_{\alpha,\xi}\subseteq\dot{\Uwf}_{\beta,\xi}\textrm{\ and\ }
                      \Mor_{\dot{\Uwf}_{\alpha,\xi}}\preceq_{V_{\alpha,\xi}}
                      \Mor_{\dot{\Uwf}_{\beta,\xi}}\textrm{\rcom}$.
                \item It is forced by $\Por_{\alpha,\xi}$ that $\dot{\Uwf}_{\alpha,\xi}$ contains the Mathias reals added by
                      $\Por_{\alpha,\xi'+1}$ for each $\xi'\in U$, $\xi'<\xi$.
                \item $\dot{\Uwf}_{\alpha+1,\xi}$ comes from the application of Lemma \ref{mathiasStar} to $M=V_{\alpha,\xi}$, $N=V_{\alpha+1,\xi}$, $\Uwf=\Uwf_{\alpha,\xi}$ ($\dot{\Uwf}_{\alpha,\xi}$ as interpreted in the $\Por_{\alpha,\xi}$-extension)
                    and \emph{$\sqsubset$ is fixed from the beginning of the construction of the matrix iteration.}
             \end{itemize}
             The details of how this construction can be done can be found in \cite{blsh} and in Sections 4 and 5 of \cite{BF}. Put $\Qnm_{\alpha,\xi}=\Mor_{\dot{\Uwf}_{\alpha,\xi}}$.
       \item For $\xi\in L$ define, by recursion on $\alpha\leq\delta$, a $\Por_{\alpha,\xi}$-name $\dot{\Uwf}_{\alpha,\xi}$
             of a non-principal ultrafilter on $\omega$ such that
             \begin{itemize}
                \item For $\alpha<\beta\leq\delta$, $\Vdash_{\beta,\xi} \dot{\Uwf}_{\alpha,\xi}\subseteq\dot{\Uwf}_{\beta,\xi}$.
                \item It is forced by $\Por_{\alpha,\xi}$ that $\dot{\Uwf}_{\alpha,\xi}$ contains the range of the Laver reals added by
                      $\Por_{\alpha,\xi'+1}$ for each $\xi'\in L$, $\xi'<\xi$.
                \item If $\beta\leq\delta$ has uncountable cofinality, then $\Vdash_{\beta,\xi}\Uwf_{\beta,\xi}=\bigcup_{\alpha<\beta}\Uwf_{\alpha,\xi}$
             \end{itemize}
             Put $\Qnm_{\alpha,\xi}=\Lor_{\dot{\Uwf}_{\alpha,\xi}}$ (Lemma \ref{LaverStarPitchfork}
             is relevant for this construction).
       \item For $\xi\in T$ fix a $\Por_{\Delta(\xi),\xi}$-name $\Tnm_\xi$ of a ccc poset whose conditions are reals. Put
           \[\Qnm_{\alpha,\xi}=\left\{
               \begin{array}{ll}
                   \mathds{1} & \textrm{if $\alpha\leq \Delta(\xi)$,}\\
                   \Tnm_\xi & \textrm{if $\alpha>\Delta(\xi)$.}
               \end{array}
               \right.\]
   \end{enumerate}
   \end{enumerate}
   It is clear that this satisfies the conditions of the Definition \ref{DefMatrixIt}.\\
   From the iteration in (1), for $\alpha<\delta$ let $\dot{c}_\alpha$ be a $\Por_{\alpha+1,0}$-name
   for a Cohen real over $V_{\alpha,0}$. Therefore, from Context \ref{ContextUnbd} it is clear that $\dot{c}_\alpha$ represents a
   $\sqsubset$-unbounded real over $V_{\alpha,0}$ (actually, this is the only place where we use in this paper the second condition of
   Context \ref{ContextUnbd}).
\end{context}
The same argument as in the proof of \cite[Lemma 15]{BF} yields the following.
\begin{theorem}[Brendle and Fischer]\label{realsmatrixbelow}
   Assume that $\delta$ has uncountable cofinality and $\xi\leq\gamma$.
   \begin{enumerate}[(a)]
      \item If $p\in\Por_{\delta,\xi}$ then there exists an $\alpha<\delta$ such that $p\in\Por_{\alpha,\xi}$.
      \item If $\dot{h}$ is a $\Por_{\delta,\xi}$-name for a real, then there exists an $\alpha<\delta$ such that
            $\dot{h}$ is a $\Por_{\alpha,\xi}$-name.
   \end{enumerate}
\end{theorem}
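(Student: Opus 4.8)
The statement is a standard "reflection" result for finite support iterations along an index of uncountable cofinality, so the plan is to mimic the argument of \cite[Lemma 15]{BF} applied to the horizontal iteration $\langle\Por_{\alpha,\xi},\Qnm_{\alpha,\xi}\rangle$ with $\alpha$ ranging over $\alpha\leq\delta$ (equivalently, the vertical direction of the matrix at stage $\xi$). Part (a) is the base case: a condition $p\in\Por_{\delta,\xi}$ has finite support as a member of the fsi, and since $\Por_{\delta,\xi}$ is built as an increasing union (with complete embeddings) of the $\Por_{\alpha,\xi}$ for $\alpha<\delta$ along the vertical coordinate, each coordinate of $p$ lives in some $\Por_{\alpha_i,\xi}$; as $\cf(\delta)>\omega$ and the support is finite, $\alpha:=\sup_i\alpha_i<\delta$ works. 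The one thing to check is that the vertical tower $\langle\Por_{\alpha,\xi}\rangle_{\alpha\leq\delta}$ is genuinely continuous at limits of uncountable cofinality, i.e. $\Por_{\delta,\xi}=\bigcup_{\alpha<\delta}\Por_{\alpha,\xi}$ (direct limit) when $\cf(\delta)>\omega$; this is forced by the finite-support convention in the horizontal iterations together with Definition \ref{DefMatrixIt}(3) via Lemma \ref{CompSubordIt}, and for the cases $\xi\in U,L$ one must additionally invoke the clauses in Context \ref{ContextMatrix} specifying that $\Uwf_{\delta,\xi}=\bigcup_{\alpha<\delta}\Uwf_{\alpha,\xi}$ at such $\delta$, so that $\Qnm_{\delta,\xi}=\Mor_{\dot\Uwf_{\delta,\xi}}$ (resp.\ $\Lor_{\dot\Uwf_{\delta,\xi}}$) is the "union" of the $\Qnm_{\alpha,\xi}$ in the appropriate sense.

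**Part (b).** Proceed by induction on $\xi\leq\gamma$, treating $\dot h$ as a name in the fsi $\langle\Por_{\delta,\eta},\Qnm_{\delta,\eta}\rangle_{\eta<\xi}$. First reduce to a name depending on only countably many horizontal coordinates: a $\Por_{\delta,\xi}$-name for a real is (modulo a maximal antichain argument, using ccc) determined by countably many maximal antichains, each of which is countable, hence by countably many conditions; each such condition, by part (a), lies in $\Por_{\alpha_n,\xi}$ for some $\alpha_n<\delta$, and again $\cf(\delta)>\omega$ gives a single $\alpha<\delta$ above all of them. Then $\dot h$ is (equivalent to) a $\Por_{\alpha,\xi}$-name. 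The delicate point, exactly as in \cite[Lemma 15]{BF}, is that "countably many conditions, each in some $\Por_{\alpha_n,\xi}$" already requires part (a) uniformly, and that the supremum step genuinely needs the direct-limit continuity established in part (a)'s proof.

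**Main obstacle.** The only real work is verifying the direct-limit/continuity behavior of the vertical coordinate at $\delta$ for the ultrafilter-indexed columns $\xi\in U$ and the Laver-ultrafilter columns $\xi\in L$: one must check that the recursively constructed names $\dot\Uwf_{\alpha,\xi}$ were set up (per the bullet points in Context \ref{ContextMatrix}) so that $\Qnm_{\delta,\xi}$ is forced to be the increasing union of the $\Qnm_{\alpha,\xi}$, which is what makes $\Por_{\delta,\xi}$ a direct (not inverse) limit of the $\Por_{\alpha,\xi}$'s over $\alpha<\delta$. Once this is in hand, everything else is the routine ccc-plus-finite-support bookkeeping that appears verbatim in \cite[Lemma 15]{BF}, and I would simply cite that lemma's proof for the remaining details rather than reproduce it.
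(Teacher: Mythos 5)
Your proposal takes essentially the same route as the paper, which gives no proof of its own and simply remarks that the argument of \cite[Lemma 15]{BF} applies in the wider setting of Context \ref{ContextMatrix}; your identification of the one genuinely new checkpoint --- continuity of the ultrafilter columns $\xi\in U,L$ in the vertical variable at levels of uncountable cofinality --- is exactly the point the explicit union-of-ultrafilters clauses in Context \ref{ContextMatrix} are there to guarantee.

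Two details in your sketch deserve tightening. First, (a) and (b) are established by a \emph{simultaneous} induction on $\xi$: at a successor $\xi=\xi'+1$, a condition $p\in\Por_{\delta,\xi}$ is a pair $(q,\dot{r})$ with $\dot{r}$ a $\Por_{\delta,\xi'}$-name for a condition (coded by a real) of $\Qnm_{\delta,\xi'}$, and pushing $\dot{r}$ down into some $\Por_{\alpha,\xi'}$ already invokes (b) at $\xi'$; so (a) is not a self-contained \lcom base case\rcom\ that precedes (b). Second, the direct-limit equality $\Por_{\delta,\xi}=\bigcup_{\alpha<\delta}\Por_{\alpha,\xi}$ \emph{is} statement (a) and does not follow from Lemma \ref{CompSubordIt}; that lemma only certifies the complete embedding $\Por_{\alpha,\xi}\lessdot\Por_{\delta,\xi}$ and by itself says nothing about every condition of $\Por_{\delta,\xi}$ already living at some level $\alpha<\delta$. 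What actually closes the successor step is the vertical continuity of $\Qnm_{\cdot,\xi'}$: absoluteness for Suslin posets when $\xi'\in S$; the clause $\Uwf_{\delta,\xi'}=\bigcup_{\alpha<\delta}\Uwf_{\alpha,\xi'}$ when $\xi'\in U,L$ and $\cf(\delta)>\omega$; and, for $\xi'\in T$, the fact that $\Tnm_{\xi'}$ is a fixed $\Por_{\Delta(\xi'),\xi'}$-name, so $\Qnm_{\alpha,\xi'}$ is constant above $\Delta(\xi')$. With those corrections your argument coincides with the one the paper intends to inherit from \cite{BF}.
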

When we go through generic extensions of the matrix iteration, for every $\alpha<\delta$ we are interested in preserving the
$\sqsubset$-unboundedness of $\dot{c}_\alpha$ through the horizontal iterations. The following results state conditions that guarantee this.
\begin{theorem}[{\cite[Thm. 10]{mejia}}]\label{PresvUnbdg}
   Assume that
   \begin{enumerate}[(i)]
      \item For every $\xi\in S$ and $\alpha\leq\delta$, $\Vdash_{\Por_{\alpha,\xi}}(+_{\Qnm_{\alpha,\xi},\sqsubset})$.
      \item If $U\neq\varnothing$, then $L=\varnothing$ and $\sqsubset$ is $<^*$ or $\in^*$ and is the one fixed for (2)(ii) in Context \ref{ContextMatrix}.
      \item If $L\neq\varnothing$, then $U=\varnothing$ and $\sqsubset$ is $\pitchfork$.
   \end{enumerate}
   Then, for all $\alpha<\delta$, $\Por_{\alpha+1,\gamma}$ forces that $\dot{c}_\alpha$
   is a $\sqsubset$-unbounded real over $V_{\alpha,\gamma}$.
\end{theorem}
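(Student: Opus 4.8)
The plan is to read the conclusion as an instance of the preservation‑of‑unbounded‑reals calculus of this section and to verify it coordinate by coordinate through Theorem \ref{PreservStar}. Fix $\alpha<\delta$. Passing to a generic for the relevant vertical initial segment, write $M=V_{\alpha,0}$ and $N=V_{\alpha+1,0}$, so $M\subseteq N$; since $\dot c_\alpha$ is interpreted as a Cohen real over $M$, the second clause of Context \ref{ContextUnbd} makes $c:=\dot c_\alpha$ a $\sqsubset$‑unbounded real over $M$. Because $\Por_{\alpha,0}$ and $\Por_{\alpha+1,0}$ are complete suborders of $\Por_{\alpha,\gamma}$ and $\Por_{\alpha+1,\gamma}$, the quotients $\Por_{\alpha,\gamma}/\Por_{\alpha,0}$ and $\Por_{\alpha+1,\gamma}/\Por_{\alpha+1,0}$ are, over $M$ and over $N$ respectively, finite support iterations of ccc posets whose steps are the reinterpretations of the $\Qnm_{\alpha,\xi}$ and of the $\Qnm_{\alpha+1,\xi}$; condition (3) of Definition \ref{DefMatrixIt} together with Lemma \ref{CompSubordIt} then gives, for every $\xi\leq\gamma$, that the length‑$\xi$ part of the first quotient is a complete suborder with respect to $M$ of the length‑$\xi$ part of the second. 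Unravelling Definition \ref{DefPresUnbdg}, what must be shown is exactly that $(\star,\Por_{\alpha,\gamma}/\Por_{\alpha,0},\Por_{\alpha+1,\gamma}/\Por_{\alpha+1,0},M,N,\sqsubset,c)$ holds, since a $\Por_{\alpha+1,\gamma}$‑generic extension is a generic extension of $N$ by $\Por_{\alpha+1,\gamma}/\Por_{\alpha+1,0}$ inside which $V_{\alpha,\gamma}$ is the induced $\Por_{\alpha,\gamma}/\Por_{\alpha,0}$‑generic extension of $M$.

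By Theorem \ref{PreservStar} this reduces to proving, for every $\xi<\gamma$, that $(\star,\Qnm_{\alpha,\xi},\Qnm_{\alpha+1,\xi},V_{\alpha,\xi},V_{\alpha+1,\xi},\sqsubset,c)$ holds (as forced over $N$ by the length‑$\xi$ part of $\Por_{\alpha+1,\gamma}/\Por_{\alpha+1,0}$). I would prove this by induction on $\xi$. The mechanism is that, once it is known for all $\xi'<\xi$, Theorem \ref{PreservStar} applied to the length‑$\xi$ iterations already yields that $c$ is forced to be $\sqsubset$‑unbounded over $V_{\alpha,\xi}$, and this is precisely the standing hypothesis under which all the $\star$‑preservation lemmas of this section are stated; hence limit stages and $\xi=0$ need nothing further.

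For the successor step I would split into the four blocks of $\gamma$. If $\xi\in S$, then $\Qnm_{\alpha,\xi}$ and $\Qnm_{\alpha+1,\xi}$ are the interpretations of the Suslin ccc poset $\Sor_\xi$ in $V_{\alpha,\xi}$ and $V_{\alpha+1,\xi}$ and, by hypothesis (i), $(+_{\Sor_\xi,\sqsubset})$ holds in $V_{\alpha,\xi}$, so Lemma \ref{suslinStar} yields the required $\star$. If $\xi\in U$, then by hypothesis (ii) $\sqsubset$ is $<^*$ or $\in^*$ and is the relation fixed for clause (2)(ii) of Context \ref{ContextMatrix}; but there $\dot{\Uwf}_{\alpha+1,\xi}$ is \emph{defined} by applying Lemma \ref{mathiasStar} with $M=V_{\alpha,\xi}$, $N=V_{\alpha+1,\xi}$, $\Uwf=\Uwf_{\alpha,\xi}$, this $\sqsubset$, and the real $c$ (legitimately, as $c$ is $\sqsubset$‑unbounded over $V_{\alpha,\xi}$ by the inductive hypothesis), so the required $\star$ holds by construction. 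If $\xi\in L$, then by hypothesis (iii) $\sqsubset$ is $\pitchfork$; the construction gives $\Vdash\dot{\Uwf}_{\alpha,\xi}\subseteq\dot{\Uwf}_{\alpha+1,\xi}$, so Lemma \ref{LaverStarPitchfork}(a) supplies the complete‑suborder relation and Lemma \ref{LaverStarPitchfork}(c) supplies the $\star$ for $\pitchfork$, again using that $c$ is $\pitchfork$‑unbounded over $V_{\alpha,\xi}$. If $\xi\in T$: when $\alpha+1\leq\Delta(\xi)$ both $\Qnm_{\alpha,\xi}$ and $\Qnm_{\alpha+1,\xi}$ are trivial and there is nothing to prove; when $\alpha>\Delta(\xi)$ the two are one and the same poset $\Tnm_\xi$, read off the $\Por_{\Delta(\xi),\xi}$‑generic, and Lemma \ref{fixedStar} applies; when $\alpha=\Delta(\xi)$ we have $\Qnm_{\alpha,\xi}=\mathds{1}$ and $\Qnm_{\alpha+1,\xi}=\Tnm_\xi$, and the required $\star$ asks only that $\Tnm_\xi$ force $c\not\sqsubset h$ for every $h\in V_{\alpha,\xi}\cap\omega^\omega$, which holds because $c$ and $h$ already lie in $V_{\alpha+1,\xi}$, there $c\not\sqsubset h$ holds by the inductive hypothesis, and $c\not\sqsubset h$ is absolute between $V_{\alpha+1,\xi}$ and its $\Tnm_\xi$‑extension since each $\sqsubset_n$ is closed in the arithmetical sense. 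Collecting the four cases and applying Theorem \ref{PreservStar} over the full length $\gamma$ finishes the proof.

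I expect the main obstacle to be not a single hard estimate --- the substantive work is carried by the quoted lemmas (\ref{suslinStar}, \ref{mathiasStar}, \ref{LaverStarPitchfork}, \ref{fixedStar}) and by Theorem \ref{PreservStar} --- but the bookkeeping that makes the induction legitimate: each $\star$‑lemma at coordinate $\xi$ is available only once $c$ is known to be $\sqsubset$‑unbounded over the intermediate model $V_{\alpha,\xi}$, and the point is to notice that this ostensibly circular demand is discharged by applying Theorem \ref{PreservStar} to the already‑treated initial segment of the horizontal iteration. A secondary point is to see that hypotheses (ii) and (iii) are exactly the compatibility constraints that allow the $U$‑ and $L$‑coordinates to be treated (Lemma \ref{mathiasStar} is only available for $\sqsubset\in\{<^*,\in^*\}$ and Lemma \ref{LaverStarPitchfork}(c) only for $\sqsubset=\pitchfork$), and that $U$ and $L$ may not both be nonempty.
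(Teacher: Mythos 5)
Your proof is correct and follows the intended approach. The paper itself does not reprove this theorem (it cites \cite[Thm.~10]{mejia} and silently extends it with the Laver-with-ultrafilter column via Lemma~\ref{LaverStarPitchfork}), but the argument you give is exactly the one the cited reference and this extension require: reduce, via Theorem~\ref{PreservStar}, to a coordinate-wise $\star$, run a simultaneous induction on $\xi$ in which the already-treated initial segment supplies the standing hypothesis ``$c$ is $\sqsubset$-unbounded over $V_{\alpha,\xi}$'' needed by each $\star$-lemma, and dispatch the four blocks $S$, $U$, $L$, $T$ with Lemmas~\ref{suslinStar}, \ref{mathiasStar}, \ref{LaverStarPitchfork}(c), and \ref{fixedStar} (plus absoluteness of $\Sigma^0_2$ statements for the $\alpha=\Delta(\xi)$ subcase). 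Your observations that the construction of the $U$- and $L$-columns in Context~\ref{ContextMatrix} is interleaved with this very preservation argument, and that hypotheses (ii)--(iii) are precisely what make Lemmas~\ref{mathiasStar} and \ref{LaverStarPitchfork}(c) applicable while forcing $U$ and $L$ to be disjointly nonempty, are the two points of genuine care in the proof and you have identified both.
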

\begin{corollary}[{\cite[Cor. 1]{mejia}}]\label{MainMatrixConsq}
   With the same assumptions as in Theorem \ref{PresvUnbdg}, if $\delta$ has uncountable cofinality, then
   $\Vdash_{\Por_{\delta,\gamma}}\cf(\delta)\leq\dfrak_\sqsubset$.
\end{corollary}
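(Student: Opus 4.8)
The statement to prove is Corollary \ref{MainMatrixConsq}: under the hypotheses of Theorem \ref{PresvUnbdg} and with $\cf(\delta)$ uncountable, $\Vdash_{\Por_{\delta,\gamma}}\cf(\delta)\leq\dfrak_\sqsubset$. The plan is to argue that in the final extension $V_{\delta,\gamma}$ there is a $\sqsubset$-unbounded family of size $\cf(\delta)$, equivalently that no family of size $<\cf(\delta)$ is $\sqsubset$-dominating; then $\dfrak_\sqsubset\geq\cf(\delta)$ follows by definition. The natural witnessing family is the set of Cohen reals $\{\dot c_\alpha:\alpha<\delta\}$ (more precisely a cofinal subset of them), since these are exactly the reals produced by the leftmost vertical iteration and Theorem \ref{PresvUnbdg} tells us each $\dot c_\alpha$ survives as $\sqsubset$-unbounded over $V_{\alpha,\gamma}$.

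\medskip

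\textbf{Key steps.} First I would fix, in $V_{\delta,\gamma}$, an arbitrary family $D=\{g_\beta:\beta<\theta\}\subseteq\omega^\omega$ with $\theta<\cf(\delta)$ and show it is not $\sqsubset$-dominating. Since $\Por_{\delta,\gamma}$ is ccc (it is a fsi/matrix of ccc posets), each name $\dot g_\beta$ can be taken to depend on countably many coordinates; because $\delta$ has uncountable cofinality, by Theorem \ref{realsmatrixbelow}(b) applied at stage $\gamma$ each $\dot g_\beta$ is a $\Por_{\alpha_\beta,\gamma}$-name for some $\alpha_\beta<\delta$. As $\theta<\cf(\delta)$, the ordinal $\alpha^*=\sup_{\beta<\theta}\alpha_\beta$ is still $<\delta$, so the whole family $D$ already lives in $V_{\alpha^*,\gamma}$. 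Now pick any $\alpha$ with $\alpha^*\leq\alpha<\delta$; by Theorem \ref{PresvUnbdg}, $\Por_{\alpha+1,\gamma}$ (hence $\Por_{\delta,\gamma}$, since $\Por_{\alpha+1,\gamma}$ completely embeds into it by condition (3) of Definition \ref{DefMatrixIt}) forces that $\dot c_\alpha$ is $\sqsubset$-unbounded over $V_{\alpha,\gamma}\supseteq V_{\alpha^*,\gamma}\ni D$. In particular $c_\alpha\not\sqsubset g_\beta$ for every $\beta<\theta$, so $D$ fails to dominate $c_\alpha$. Since $D$ was arbitrary of size $<\cf(\delta)$, this shows $\dfrak_\sqsubset\geq\cf(\delta)$ in $V_{\delta,\gamma}$, which is the claim. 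One should also remark that the set $\{c_\alpha:\alpha^*\leq\alpha<\delta\}$ is genuinely a $\sqsubset$-unbounded family of size $\cf(\delta)$, giving the inequality directly from the definition of $\bfrak_\sqsubset\geq$ nothing—rather, it is the reformulation via $\dfrak_\sqsubset$ that we want, so I would phrase the argument in the ``no small dominating family'' form as above.

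\medskip

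\textbf{Main obstacle.} The only real point requiring care is the reflection step: justifying that a family of $<\cf(\delta)$ reals in the full extension already appears at some stage $V_{\alpha^*,\gamma}$ with $\alpha^*<\delta$. This is exactly Theorem \ref{realsmatrixbelow}(b) (the analogue of \cite[Lemma 15]{BF}), whose hypothesis is precisely that $\delta$ has uncountable cofinality—so the corollary's hypothesis is used here in an essential way, not merely to make $\cf(\delta)$ an interesting cardinal. A secondary subtlety is that Theorem \ref{PresvUnbdg} is stated as ``$\Por_{\alpha+1,\gamma}$ forces $\dot c_\alpha$ is $\sqsubset$-unbounded over $V_{\alpha,\gamma}$'', whereas we work in the $\Por_{\delta,\gamma}$-extension; one resolves this by noting $\Por_{\alpha+1,\gamma}\lessdot\Por_{\delta,\gamma}$ and that ``$\sqsubset$-unbounded over $V_{\alpha,\gamma}$'' is upward absolute in the sense that if it holds after forcing with $\Por_{\alpha+1,\gamma}$ then, since $\sqsubset$ is an (increasing union of) closed relation(s), $c_\alpha\not\sqsubset g$ persists for every $g\in V_{\alpha,\gamma}$ in any further extension. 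Everything else is bookkeeping.
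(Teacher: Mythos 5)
Your proof is correct and is essentially the argument the paper has in mind (the cited \cite[Cor.~1]{mejia} uses exactly this reflection argument): reflect a would-be dominating family of size $<\cf(\delta)$ into some $V_{\alpha^*,\gamma}$ via Theorem~\ref{realsmatrixbelow}(b), then use Theorem~\ref{PresvUnbdg} and the absoluteness of $\not\sqsubset$ (an increasing union of closed relations, so each $c_\alpha\not\sqsubset g$ is Borel and persists from $V_{\alpha+1,\gamma}$ to $V_{\delta,\gamma}$) to produce a real the family fails to $\sqsubset$-dominate. You have correctly identified the one technically delicate point, namely that the uncountable-cofinality hypothesis is what makes the reflection of names possible, and your remark on upward absoluteness properly bridges the gap between what Theorem~\ref{PresvUnbdg} forces at stage $\alpha+1$ and what is needed in the full extension.
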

By Lemma \ref{InvforPitchfork}, Corollary \ref{MainMatrixConsq} holds for $\sqsubset=\pitchfork$ with $\non(\Nwf)$ in place of
$\dfrak_{\pitchfork}$.

\section{Applications}\label{SecAppl}

For any infinite cardinal $\lambda$, we use the notation
\begin{description}
  \item[$\mathbf{GCH}_\lambda$] For any infinite cardinal $\mu$,
       \[2^\mu=\left\{
            \begin{array}{ll}
               \lambda & \textrm{if $\mu<\cf(\lambda)$,}\\
               \lambda^+ & \textrm{if $\cf(\lambda)\leq\mu<\lambda$,}\\
               \mu^+ & \textrm{if $\lambda\leq\mu$.}
            \end{array}\right.\]
\end{description}
Throughout this section our results are given for a model $V$ of $\thzfc$. There, we fix $\mu_1\leq\mu_2\leq\mu_3\leq\nu\leq\kappa$
uncountable regular cardinals and a cardinal $\lambda\geq\kappa$. By using the same techniques as in \cite{brendle} and \cite[Sect. 3]{mejia}, we get the following three results.
\begin{theorem}\label{CovM-large}
    Assume, in $V$, $\mathbf{GCH}$ and $\cf(\lambda)\geq\mu_3$. Then, there exists a ccc poset that forces $\mathbf{GCH}_\lambda$,
   $\add(\Nwf)=\mu_1$, $\cov(\Nwf)=\mu_2$, $\pfrak=\non(\Mwf)=\mu_3$ and $\cov(\Mwf)=\cfrak=\lambda$.
\end{theorem}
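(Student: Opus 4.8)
The plan is to construct the required poset as a finite support iteration, following the template of \cite{brendle} and \cite[Sect. 3]{mejia} but tuned so that the five prescribed values come out correctly. First I would fix a bookkeeping function to organize an fsi $\Por_\lambda=\langle\Por_\alpha,\Qnm_\alpha\rangle_{\alpha<\lambda}$ of ccc posets, where at each stage we force with one of: (a) a subalgebra of the random algebra $\Bor$ of size $<\mu_2$, (b) Cohen forcing $\Cor$, or (c) amoeba forcing $\Aor$ and, for handling $\pfrak=\non(\Mwf)=\mu_3$, Mathias forcing $\Mor_\Fwf$ with suitable filter bases of size $<\mu_3$ (and $\sigma$-centered Hechler-type or eventually different reals as needed to glue $\non(\Mwf)=\mu_3$). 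Since all these are ccc and the iteration has length $\lambda$ with finite support over a model of $\mathbf{GCH}$, a standard $\Delta$-system/nice-name count gives $\cfrak=\lambda$ and $\mathbf{GCH}_\lambda$ in the extension; this is routine.

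Next I would verify the lower bounds using the preservation machinery of Section~\ref{SecPres}. For $\add(\Nwf)=\mu_1$: iterating amoeba forcing $\Aor$ cofinally (with the bookkeeping ensuring every $\mu_1$-sized family of null sets appearing along the way gets absorbed) gives $\add(\Nwf)\geq\mu_1$, and one arranges $\add(\Nwf)\leq\mu_1$ by a matrix-style restriction (e.g. only using component posets of size $<\mu_1$ from a certain point, or using the product structure as in \cite[Sect. 3]{mejia}). For $\cov(\Nwf)\geq\mu_2$: by Lemma~\ref{spamPlus}, every subalgebra of the random algebra satisfies $(+_{\cdot,\in^*})$, but what we actually need is that random reals are added cofinally so that no $<\mu_2$-sized family of null sets covers; conversely $\cov(\Nwf)\leq\mu_2$ is forced by keeping the random-algebra components of size $<\mu_2$ and invoking that such forcings are ${}^{\omega}\omega$-bounded enough to not add too many random reals at once — more precisely one uses the characterization via $\pitchfork$ (Lemma~\ref{InvforPitchfork}) together with Lemma~\ref{centeredFork} applied to the $\sigma$-centered components. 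For $\pfrak=\non(\Mwf)=\mu_3$: cofinal Mathias forcing with $\subseteq^*$-decreasing filter bases pushes $\pfrak\geq\mu_3$ (every small filter base gets a pseudo-intersection), while $\cov(\Mwf)=\lambda$ comes from the cofinal Cohen reals, and $\non(\Mwf)\leq\mu_3$ is obtained because the whole iteration has a dense $\sigma$-centered-like "eventually different" structure of size $\mu_3$ — here one applies Example~\ref{SubsecNonMeag} with $\eqcirc$, Lemma~\ref{EvDiffPlus}, and Theorem~\ref{preservPlus} to propagate $(+^{\mu_3}_{\cdot,\eqcirc})$ through the tail of the iteration so that $\bfrak_{\eqcirc}=\non(\Mwf)\leq\mu_3$.

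Then I would nail down the upper bounds $\cov(\Nwf)\leq\mu_2$, $\non(\Mwf)=\mu_3$, and the Cichon constraints. The key point is that $\mu_1\leq\mu_2\leq\mu_3$ is exactly what makes the preservation properties compatible: $(+^{\mu_1}_{\cdot,\in^*})$ holds for all $\sigma$-centered components (Lemma~\ref{centeredAddNull}) and for random subalgebras (Lemma~\ref{spamPlus}), which by Theorem~\ref{preservPlus} gives $(+^{\mu_1}_{\Por_\lambda,\in^*})$ restricted to the appropriate tail, hence $\add(\Nwf)\leq\mu_1$ in the final model via Lemma~\ref{PreserTriangle} applied to a suitable $(\blacktriangle,\in^*,F,\mu_1)$ witnessed by the amoeba-generics; and similarly $(+^{\mu_2}_{\cdot,\pitchfork})$ from Lemma~\ref{centeredFork} caps $\cov(\Nwf)$ via Lemma~\ref{InvforPitchfork}. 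One also checks the remaining invariants fall into line by Cichon's diagram: $\add(\Mwf)=\min\{\bfrak,\cov(\Mwf)\}$ forces $\bfrak=\pfrak=\mu_3$ (since $\pfrak\leq\add(\Mwf)\leq\bfrak$ and $\cov(\Mwf)=\lambda$), and $\cof(\Nwf)=\lambda$ follows from $\cov(\Mwf)\leq\cof(\Nwf)$.

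The main obstacle I expect is the delicate interleaving required to get \emph{all four} of $\add(\Nwf)$, $\cov(\Nwf)$, $\pfrak$, $\non(\Mwf)$ to their distinct values simultaneously with a single fsi: each upper bound needs its own preservation property ($(+^{\mu_1}_{\cdot,\in^*})$, $(+^{\mu_2}_{\cdot,\pitchfork})$, $(+^{\mu_3}_{\cdot,\eqcirc})$) to survive through the whole tail of the iteration, and these are only available for restricted classes of forcings, so the bookkeeping must never use a component that violates the weakest of the three cardinals active at that stage. Concretely: once stage $\alpha$ is past the "amoeba phase", every subsequent $\Qnm_\beta$ must be $\sigma$-centered or a random subalgebra so that $(+^{\mu_1}_{\Qnm_\beta,\in^*})$ holds; once past the random phase, every subsequent component must be $\mu_2$-centered so $(+^{\mu_2}_{\Qnm_\beta,\pitchfork})$ holds; and the Mathias forcings used for $\pfrak$ must be $\sigma$-centered (they are) and their generics eventually-different-dominated so that $(+^{\mu_3}_{\cdot,\eqcirc})$ survives. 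Verifying that such a schedule exists — i.e. that the order $\mu_1\leq\mu_2\leq\mu_3$ makes the three "forbidden component" constraints nested rather than conflicting — is the crux, and it is precisely where the hypothesis $\mu_1\leq\mu_2\leq\mu_3$ is used; the rest is the same computation as in \cite[Sect. 3]{mejia}.
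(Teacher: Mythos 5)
Your proposal gestures at the right toolkit but misses the essential \emph{two-stage} structure of the paper's argument, and it contains a concrete error that would sink the construction. The paper does not build a single fsi of length $\lambda$ from $V$: it first passes to an intermediate ccc extension $V^3$ of $V$, built as in \cite[Thm.~2]{mejia}, in which already $\add(\Nwf)=\mu_1$, $\cov(\Nwf)=\mu_2$, $\add(\Mwf)=\cfrak=\mu_3$ and $\mathbf{GCH}_{\mu_3}$ hold, together with the witnessing properties $(\blacktriangle,\in^*,A,\mu_1)$, $(\blacktriangle,\pitchfork,B,\mu_2)$ and $(\blacktriangle,\eqcirc,C,\mu_3)$. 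Over $V$ itself (with $\mathbf{GCH}$), $(\blacktriangle,\in^*,F,\mu_1)$ is simply false for every $F\subseteq\omega^\omega$ when $\mu_1>\aleph_1$ --- take $X$ to be the whole (size-$\aleph_1$) slalom space --- so the preservation machinery has nothing to grab onto unless the preliminary iteration has first set it up. Only from $V^3$ does one run a length-$\lambda$ fsi, and its iterands are \emph{all small}: subalgebras of $\Aor$ of size $<\mu_1$, subalgebras of $\Bor$ of size $<\mu_2$, and Mathias forcings with filter bases of size $<\mu_3$, interleaved cyclically and arranged by bookkeeping. The upper bounds $\add(\Nwf)\leq\mu_1$, $\cov(\Nwf)\leq\mu_2$, $\non(\Mwf)\leq\mu_3$ then follow because the three triangle properties survive this iteration via Lemma~\ref{PreserTriangle}, Lemma~\ref{smallPlus} and Theorem~\ref{preservPlus}. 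You never invoke Lemma~\ref{smallPlus}, but it is the linchpin: since every iterand is small relative to every $\mu_i$, a single iteration preserves all three upper bounds at once, with no ``scheduling'' needed.

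Concretely, your schedule uses \emph{full} amoeba forcing $\Aor$, but $\Aor$ adds a slalom $\in^*$-dominating every ground-model real, so $(+^{\mu_1}_{\Aor,\in^*})$ fails whenever $\mu_1\leq\cfrak$; iterating it cofinally pushes $\add(\Nwf)$ up to $\lambda$ and destroys $\add(\Nwf)\leq\mu_1$. This is exactly why the paper restricts to amoeba subalgebras of size $<\mu_1$. Your closing ``phased'' idea (an initial amoeba phase, then a random phase, then a $\sigma$-centered tail) also cannot deliver the \emph{lower} bounds: once the amoeba phase ends, later stages still introduce $\in^*$-unbounded reals (already the Cohen reals arising at limit stages of the fsi), so $\add(\Nwf)$ drops back to $\aleph_1$; amoeba-type iterands must recur cofinally, with bookkeeping catching every small family that appears along the way. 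Finally, Lemma~\ref{EvDiffPlus} is a statement about $(+_{\Eor,<^*})$, not about $\eqcirc$; the tool that gives $\non(\Mwf)\leq\mu_3$ is, once again, Lemma~\ref{smallPlus} applied to the relation $\eqcirc$ using that all iterands of the length-$\lambda$ iteration have size $<\mu_3$. The part of your sketch that \emph{is} close to the paper is $\pfrak\geq\mu_3$ via bookkeeping over Mathias iterands and $\cov(\Mwf)=\cfrak=\lambda$ via the Cohen reals at limit stages.
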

\begin{proof}
    In the proof of \cite[Thm. 2]{mejia} it is constructed a model $V^3$, which is a generic extension of a ccc poset, that satisfies $\add(\Nwf)=\mu_1$, $\cov(\Nwf)=\mu_2$, $\add(\Mwf)=\cfrak=\mu_3$, $\mathbf{GCH}_{\mu_3}$, $(\blacktriangle,\in^*,A,\mu_1)$ and $(\blacktriangle,\pitchfork,B,\mu_2)$ where $A$ is some subset of $\omega^\omega$ of size $\mu_1$ and $B$ is some subset of $2^\omega$ of size $\mu_2$. Note that $(\blacktriangle,\eqcirc,C,\mu_3)$ holds for $C:=\omega^\omega\cap V^3$, which has size $\mu_3$.\\
    In $V^3$, perform a fsi $\langle\Por^3_\alpha,\Qnm^3_\alpha\rangle_{\alpha<\lambda}$ such that
    \begin{itemize}
       \item for $\alpha\equiv0\modulo3$, $\Qnm^3_\alpha$ is a $\Por^3_\alpha$-name for a subalgebra of $\Aor$ of size $<\mu_1$,
       \item for $\alpha\equiv1\modulo3$, $\Qnm^3_\alpha$ is a $\Por^3_\alpha$-name for a subalgebra of $\Bor$ of size $<\mu_2$ and
       \item for $\alpha\equiv2\modulo3$, $\Qnm^3_\alpha=\Mor_{\dot{\Fwf}_\alpha}$ where $\dot{\Fwf}_\alpha$ is a $\Por^3_{\alpha}$-name for a filter base of size $<\mu_3$.
    \end{itemize}
    By a book-keeping argument, we ensure to use all such subalgebras and filter bases. Like in the argument of the cited proof, in a extension $V^4$ of this iteration, $\mathbf{GCH}_\lambda$ holds, $\add(\Nwf)=\mu_1$, $\cov(\Nwf)=\mu_2$ and $\non(\Mwf)\leq\mu_3$ are preserved and $\cov(\Mwf)=\cfrak=\lambda$. $\pfrak\geq\mu_3$ because any filter base
    of size $<\mu_3$ has a pseudo-intersection, which is a Mathias real added at some step $\alpha<\lambda$ with $\alpha\equiv2\modulo 3$.
\end{proof}
\begin{theorem}\label{NonN,r-large}
   In $V$, assume $\mathbf{GCH}$ and $\cf(\lambda)\geq\mu_3$. Then, there exists a ccc poset that forces $\mathbf{GCH}_\lambda$,
   $\add(\Nwf)=\mu_1$, $\cov(\Nwf)=\mu_2$, $\pfrak=\sfrak=\mu_3$, $\add(\Mwf)=\cof(\Mwf)=\kappa$ and $\non(\Nwf)=\rfrak=\cfrak=\lambda$.
\end{theorem}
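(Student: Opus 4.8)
The plan is to imitate the two-step strategy of the proof of Theorem \ref{CovM-large}, the essential change being that the second step must be a matrix iteration in the sense of Context \ref{ContextMatrix} rather than a plain fsi: we need $\cov(\Mwf)=\cof(\Mwf)=\kappa<\lambda$, and a plain fsi of length $\lambda$ would push $\cov(\Mwf)$ up to $\lambda$. First I would obtain, by the techniques of \cite[Sect. 3]{mejia} (a matrix iteration over a model of $\mathbf{GCH}$ fed with subalgebras of $\Aor$ of size $<\mu_1$, subalgebras of $\Bor$ of size $<\mu_2$, and Mathias posets $\Mor_{\Fwf}$ with $|\Fwf|<\mu_3$), a model $W$ with $\mathbf{GCH}_\kappa$ in which $\add(\Nwf)=\mu_1$, $\cov(\Nwf)=\mu_2$, $\pfrak=\sfrak=\mu_3$ and $\add(\Mwf)=\cof(\Mwf)=\cfrak=\kappa$, and in which moreover $(\blacktriangle,\in^*,A,\mu_1)$, $(\blacktriangle,\pitchfork,B,\mu_2)$ and $(\blacktriangle,\varpropto,C,\mu_3)$ hold for suitable $A,B,C$ of sizes $\mu_1,\mu_2,\mu_3$, and where the lower bounds $\add(\Nwf)\geq\mu_1$, $\cov(\Nwf)\geq\mu_2$, $\pfrak\geq\mu_3$ are witnessed in a way that reflects through ccc iterations. (Recall that $\sfrak\geq\pfrak$ in $\thzfc$, so $\sfrak\geq\mu_3$ is automatic.)

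Over $W$ I would then run a matrix iteration $\Por_{\delta,\gamma}=\langle\langle\Por_{\alpha,\xi},\Qnm_{\alpha,\xi}\rangle_{\xi<\gamma}\rangle_{\alpha\leq\delta}$ as in Context \ref{ContextMatrix}, with the fixed relation $\sqsubset$ taken to be $\pitchfork$ (so $U=\varnothing$ and, by clause (iii) of Theorem \ref{PresvUnbdg}, the ultrafilter-type coordinates sit in the Laver block $L$), with $\cf(\delta)=\lambda$ — so the vertical iteration adds $\lambda$ Cohen reals $\dot{c}_\alpha$ — and with $\gamma$ of cofinality $\kappa$, organised so that along every horizontal row one iterates, cofinally, Laver forcing $\Lor_{\dot{\Uwf}_{\alpha,\xi}}$ with ultrafilters $\dot{\Uwf}_{\alpha,\xi}$ built recursively by Lemma \ref{LaverStarPitchfork}(a),(b) and made (by book-keeping) to absorb all names of filter bases of size $<\mu_3$. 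The Laver reals supply the cofinal dominating reals that pin $\bfrak=\dfrak=\kappa$, their ranges supply the pseudo-intersections giving $\pfrak\geq\mu_3$, and Lemma \ref{LaverStarPitchfork}(c) supplies precisely the preservation property needed for clause (iii) of Theorem \ref{PresvUnbdg}; the $S$-coordinates (if used) add only Cohen reals, and no amoeba or random forcing occurs in this second step, so the failure of $(+_{\Aor,\varpropto})$ and $(+_{\Bor,\varpropto})$ (see the remark after Example \ref{SubsecSplit}) does no harm. Every iterand of the second step being $\sigma$-centered or of size $<\mu_1$, Lemmas \ref{smallPlus}, \ref{centeredFork} and \ref{centeredAddNull} give the $(+^{\mu_i}_{\cdot,\sqsubset'})$ properties for $\sqsubset'\in\{\in^*,\pitchfork\}$ and $i\in\{1,2,3\}$, while Lemmas \ref{suslinStar}, \ref{fixedStar} and Theorem \ref{PreservStar} handle the $\varpropto$-side.

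Reading off the values in $V_{\delta,\gamma}$: $\cfrak=\lambda$ and $\mathbf{GCH}_\lambda$ follow from the nice-name count and the $\lambda$ vertical Cohen reals; $\add(\Mwf)=\cof(\Mwf)=\kappa$ follows from each horizontal row having length of cofinality $\kappa$ with cofinal dominating reals (this gives $\bfrak=\dfrak=\kappa$ and $\cov(\Mwf)\geq\kappa$, while $\cov(\Mwf)\leq\dfrak=\kappa$, and the Cohen reals added at the horizontal limit stages of the top row form a non-meager set of size $\kappa$, so $\non(\Mwf)\leq\kappa$); $\non(\Nwf)=\lambda$ follows from Theorem \ref{PresvUnbdg} (each $\dot{c}_\alpha$ stays $\pitchfork$-unbounded over $V_{\alpha,\gamma}$), Corollary \ref{MainMatrixConsq} and Lemma \ref{InvforPitchfork}; $\rfrak=\lambda$ follows because Cohen reals split all earlier reals, so — $\varpropto$-unboundedness of $\dot{c}_\alpha$ over $V_{\alpha,\gamma}$ being preserved along each row by the $(\star,\cdot,\cdot,\varpropto,\cdot)$-properties of the iterands — every family of reals of size $<\lambda$ is split by a single $\dot{c}_\alpha$; and $\add(\Nwf)=\mu_1$, $\cov(\Nwf)=\mu_2$, $\pfrak=\sfrak=\mu_3$ are inherited from $W$, the upper bounds via the $(+^{\mu_i}_{\cdot,\cdot})$ properties together with Lemma \ref{PreserTriangle} applied to $A,B,C$, and the lower bounds by the reflection argument (with the $\Lor_{\dot{\Uwf}_{\alpha,\xi}}$-steps keeping $\pfrak\geq\mu_3$).

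The main obstacle is the simultaneous book-keeping of the second step: carrying out the Context \ref{ContextMatrix} recursion in this setting — constructing the ultrafilters $\dot{\Uwf}_{\alpha,\xi}$ so that Lemma \ref{LaverStarPitchfork} applies at every stage and so that every relevant small filter base is eventually absorbed — and checking that iterating dominating-real forcings cofinally along the horizontal axis is compatible with $\non(\Mwf)=\kappa$ (such reals destroy every $(\blacktriangle,\eqcirc,\cdot,\kappa)$-witness, so the bound $\non(\Mwf)\leq\kappa$ must be read off directly from the horizontal Cohen reals rather than from a preserved witness of $W$) and with the simultaneous preservation of both the $\pitchfork$- and the $\varpropto$-unboundedness of the vertical Cohen reals through every horizontal row. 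Each ingredient comes from a result of Section \ref{SecPres}; the work is in fitting them together.
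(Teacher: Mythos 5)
Your proposal is built on a false premise about what goes wrong, and the alternative you build in its place has a fatal defect.

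First, the premise. You write that ``a plain fsi of length $\lambda$ would push $\cov(\Mwf)$ up to $\lambda$'' and conclude that the second step must be a matrix iteration. But the paper's second step is a plain fsi of length $\kappa$, not $\lambda$, performed over the model $V^4$ from Theorem~\ref{CovM-large} --- a model in which $\cfrak=\lambda$ and $\cov(\Mwf)=\lambda$ \emph{already} hold and in which the witnesses $(\blacktriangle,\in^*,A,\mu_1)$, $(\blacktriangle,\pitchfork,B,\mu_2)$, $(\blacktriangle,\varpropto,C',\mu_3)$ (with $C'=[\omega]^\omega\cap V^3$) have been arranged to survive. Over $V^4$ one iterates Hechler forcing together with fsp's of size $\lambda$ of all small subalgebras of $\Aor$, of $\Bor$, and of $\Mor_\Fwf$ for small $\Fwf$, for $\kappa$ stages. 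This drives $\add(\Mwf)=\cof(\Mwf)=\kappa$; $\cfrak=\lambda$ is undisturbed since the fsi has length $\kappa\leq\lambda$; and the facts $\lambda\leq\dfrak_\pitchfork$ and $\lambda\leq\dfrak_\varpropto$ (equivalently $\lambda\leq\non(\Nwf),\rfrak$, which hold in $V^4$ since $\cov(\Mwf)=\lambda$ there) are preserved via $(+^{\mu_2}_{\Por^4_\kappa,\pitchfork})$, $(+^{\mu_3}_{\Por^4_\kappa,\varpropto})$ and Lemma~\ref{PreserTriangle}. Crucially, the $\varpropto$-side goes through because the iterands are Hechler (Lemma~\ref{HechPlus}) and posets of size $<\mu_3$ (Lemma~\ref{smallPlus}) --- no Laver forcing anywhere.

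Second, and more seriously, your replacement cannot work. You propose a matrix iteration whose horizontal rows have cofinality $\kappa$, with Laver forcing $\Lor_{\dot\Uwf_{\alpha,\xi}}$ cofinally along each row, and you claim to propagate $\varpropto$-unboundedness of the vertical Cohen reals through the rows using $(\star,\cdot,\cdot,\varpropto,\cdot)$. But $\Lor_\Uwf$ adds a real $\ran(l_\Uwf)$ which is a pseudo-intersection of the ultrafilter $\Uwf$, hence is \emph{not split by any ground-model infinite set}: for every $c\in[\omega]^\omega\cap M$ either $c\in\Vwf$ or $\omega\menos c\in\Vwf$, so $\ran(l_\Uwf)\subseteq^*c$ or $\ran(l_\Uwf)\subseteq^*\omega\menos c$, i.e.\ $c\varpropto\ran(l_\Uwf)$. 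Thus $(\star,\Lor_\Uwf,\Lor_\Vwf,M,N,\varpropto,c)$ fails, and $(+_{\Lor_\Uwf,\varpropto})$ fails as well. This is exactly the point the paper makes in the discussion after Question~\ref{QplusSuslin}: Lemma~\ref{LaverStarPitchfork}(c) does \emph{not} hold for $\varpropto$. So your argument for $\rfrak=\lambda$ collapses, and so does the preservation of $(\blacktriangle,\varpropto,C,\mu_3)$ giving $\sfrak\leq\mu_3$. Independently, your construction cannot even be repaired: since the nested ultrafilters $\Uwf_{\delta,\xi}$ ($\xi\in L$) cofinally along a row of cofinality $\kappa$ yield a $\subseteq^*$-decreasing sequence of Laver reals of length $\kappa$ generating an ultrafilter, you get $\ufrak\leq\kappa$ and hence $\rfrak\leq\ufrak\leq\kappa<\lambda$; and since each Laver real is unsplit over the stage where it appears, any splitting family of size $<\kappa$ is destroyed, so $\sfrak\geq\kappa>\mu_3$ whenever $\mu_3<\kappa$. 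Both outcomes directly contradict the statement you are trying to prove. The remedy is to drop the matrix iteration and the Laver forcing entirely and follow the paper's simpler fsi-of-length-$\kappa$ over $V^4$.
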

\begin{proof}
   Note that, in the proof of Theorem \ref{CovM-large}, $C'=[\omega]^\omega\cap V^3$ has size $\mu_3$ and $(\blacktriangle,\varpropto,C',\mu_3)$ holds in $V^3$. Also, by Lemma \ref{PreserTriangle}, this property is preserved in the model $V^4$ of the proof of Theorem \ref{CovM-large} because $(+^{\mu_3}_{\Por^3_\lambda,\varpropto})$ holds in $V^3$ from Lemma \ref{smallPlus} and Theorem \ref{preservPlus}. Now, in $V^4$, perform a fsi $\langle\Por^4_\alpha,\Qnm^4_\alpha\rangle_{\alpha<\kappa}$ such that
   \begin{itemize}
      \item for $\alpha\equiv0\modulo4$, $\Qnm^4_\alpha=\Dnm$ ($\Por^4_\alpha$-name for $\Dor$),
      \item for $\alpha\equiv1\modulo4$, $\Qnm^4_\alpha$ is the $\Por^4_\alpha$-name for the fsp (finite support product) of size $\lambda$
            of \emph{all} the subalgebras of $\Aor$ of size $<\mu_1$ in any $\Por_\alpha$-generic extension of $V^4$,
      \item for $\alpha\equiv2\modulo4$, $\Qnm^4_\alpha$ is the $\Por^4_\alpha$-name for fsp of size $\lambda$
            of \emph{all} the subalgebras of $\Bor$ of size $<\mu_2$ in any $\Por_\alpha$-generic extension of $V^4$, and
      \item for $\alpha\equiv3\modulo4$, $\Qnm^4_\alpha$ is the $\Por^4_\alpha$-name for fsp of size $\lambda$
            of $\Mor_\Fwf$ for \emph{all} the filter bases $\Fwf$ of size $<\mu_3$ in any $\Por^4_\alpha$-generic extension of $V^4$.
   \end{itemize}
   The results of Subsections \ref{SubsecPresUnbdFly} and \ref{SubsecPartCase} imply that $(+^{\mu_1}_{\Por^4_\kappa,\in^*})$, $(+^{\mu_2}_{\Por^4_\kappa,\pitchfork})$
   and $(+^{\mu_3}_{\Por^4_\kappa,\varpropto})$ hold. Let $V^5$ be a generic extension of this iteration. Then, in $V^5$, $\add(\Nwf)=\mu_1$, $\cov(\Nwf)=\mu_2$ and $\mu_3\leq\pfrak$ by a similar argument as in Theorem \ref{CovM-large} (and its corresponding cited result). It is clear that $\mathbf{GCH}_\lambda$ holds and $\cfrak\leq\lambda$. As $(\blacktriangle,\varpropto,C',\mu_3)$ is preserved in $V^5$, $\sfrak\leq\mu_3$ and $\lambda\leq\non(\Nwf),\rfrak$ by Lemma \ref{PreserTriangle}. Finally, because of the $\kappa$-cofinally many Hechler and Cohen reals added by the iteration, $\add(\Mwf)=\cof(\Mwf)=\kappa$.
\end{proof}
\begin{theorem}\label{dfrak,NonN,r-large}
   Assume $\cf(\lambda)\geq\mu_3$. It is consistent with $\thzfc$ that $\add(\Nwf)=\mu_1$, $\cov(\Nwf)=\mu_2$, $\pfrak=\sfrak=\bfrak=\mu_3$, $\cov(\Mwf)=\non(\Mwf)=\kappa$, $\dfrak=\non(\Nwf)=\rfrak=\cfrak=\lambda$ and $\mathbf{GCH}_\lambda$.
\end{theorem}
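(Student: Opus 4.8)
The plan is to combine the constructions behind Theorems~\ref{CovM-large} and~\ref{NonN,r-large}: the new twist is that the relation $\rhd$ of Example~\ref{SubsecFinSp} lets us hold $\bfrak$ \emph{and} $\sfrak$ at $\mu_3$ while $\dfrak$, $\rfrak$, $\non(\Nwf)$, $\cfrak$ reach $\lambda$ and $\cov(\Mwf)$, $\non(\Mwf)$ stay at the intermediate value $\kappa$. The point is that an eventually different iteration will be used to pull $\cov(\Mwf)$ and $\non(\Mwf)$ down to $\kappa$; although $(+_{\Eor,\varpropto})$ is not known to hold, Lemma~\ref{PresPlusUnbd->PresRhd} gives $(+_{\Eor,\rhd})$ from $(+_{\Eor,<^*})$, which controls $\bfrak_\rhd=\max\{\bfrak,\sfrak\}$ and, from the single preserved statement ``$\dfrak_\rhd\ge\lambda$'', yields $\dfrak=\rfrak=\lambda$ at once.

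Start from the model $V^3$ of the proof of Theorem~\ref{CovM-large}: there $\add(\Nwf)=\mu_1$, $\cov(\Nwf)=\mu_2$, $\add(\Mwf)=\cfrak=\mu_3$, $\mathbf{GCH}_{\mu_3}$, and $(\blacktriangle,\in^*,A,\mu_1)$, $(\blacktriangle,\pitchfork,B,\mu_2)$ hold with $|A|=\mu_1$, $|B|=\mu_2$. Since $\bfrak=\dfrak=\cov(\Mwf)=\rfrak=\mu_3=\cfrak$ in $V^3$, also $(\blacktriangle,\rhd,C',\mu_3)$ holds for $C'=[\omega]^\omega\cap V^3$ (of size $\mu_3$): no set of $<\mu_3$ many interval partitions is $\rhd$-dominating, using $\dfrak_\rhd=\min\{\dfrak,\rfrak\}$ from Example~\ref{SubsecFinSp}. \textbf{Phase 1:} over $V^3$ run the length-$\lambda$ finite support iteration of the proof of Theorem~\ref{CovM-large}, handling (by book-keeping) all subalgebras of $\Aor$ of size $<\mu_1$, all subalgebras of $\Bor$ of size $<\mu_2$, and all $\Mor_{\dot\Fwf}$ for filter bases of size $<\mu_3$ (Mathias side conditions taken among finite intersections from the base, so each such poset has size $<\mu_3$). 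Every iterand satisfies $(+^{\mu_1}_{\cdot,\in^*})$ (Lemmas~\ref{smallPlus}, \ref{spamPlus}, \ref{centeredAddNull}), $(+^{\mu_2}_{\cdot,\pitchfork})$ (Lemmas~\ref{smallPlus}, \ref{centeredFork}) and $(+^{\mu_3}_{\cdot,\rhd})$ (Lemma~\ref{smallPlus}); so by Theorem~\ref{preservPlus} and Lemma~\ref{PreserTriangle} all three $\blacktriangle$-properties survive into the extension $V^4$. In $V^4$: $\add(\Nwf)=\mu_1$, $\cov(\Nwf)=\mu_2$, $\max\{\bfrak,\sfrak\}\le\mu_3$, $\pfrak\ge\mu_3$, $\cov(\Mwf)=\cfrak=\lambda$ and hence (Cichon's diagram, Lemma~\ref{InvforPitchfork}) $\dfrak=\rfrak=\non(\Nwf)=\lambda$ and $\dfrak_\rhd=\dfrak_\pitchfork=\lambda$; and $\mathbf{GCH}_\lambda$ (using $\cf(\lambda)\ge\mu_3$, i.e.\ $\lambda^{<\mu_3}=\lambda$).

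\textbf{Phase 2:} over $V^4$ run a length-$\kappa$ finite support iteration cycling through four iterands: eventually different real forcing $\Eor$, and the finite support products of size $\lambda$ of, respectively, all subalgebras of $\Aor$ of size $<\mu_1$, all subalgebras of $\Bor$ of size $<\mu_2$, and all $\Mor_{\dot\Fwf}$ for filter bases of size $<\mu_3$. The three product iterands are again assembled from posets of size $<\mu_3$, and $\Eor$ is $\sigma$-centered with $(+_{\Eor,<^*})$ (Lemma~\ref{EvDiffPlus}); hence \emph{every} iterand satisfies $(+^{\mu_1}_{\cdot,\in^*})$, $(+^{\mu_2}_{\cdot,\pitchfork})$ and $(+^{\mu_3}_{\cdot,\rhd})$ — for $\Eor$ the last follows from $(+_{\Eor,<^*})$ by Lemma~\ref{PresPlusUnbd->PresRhd}, which is exactly where this departs from the Hechler step of Theorem~\ref{NonN,r-large}. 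By Theorem~\ref{preservPlus} and Lemma~\ref{PreserTriangle} the properties $(\blacktriangle,\in^*,A,\mu_1)$, $(\blacktriangle,\pitchfork,B,\mu_2)$, $(\blacktriangle,\rhd,C',\mu_3)$ and the statements ``$\dfrak_\pitchfork\ge\lambda$'', ``$\dfrak_\rhd\ge\lambda$'' pass into the final model $V^5$; and, exactly as in the proof of Theorem~\ref{NonN,r-large}, the $\lambda$ Cohen reals supplied by the products keep $\cfrak=\lambda$, $\add(\Nwf)=\mu_1$, $\cov(\Nwf)=\mu_2$, $\pfrak\ge\mu_3$.

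In $V^5$: the preserved $\blacktriangle$-properties give $\add(\Nwf)=\mu_1$, $\cov(\Nwf)=\mu_2$ and $\max\{\bfrak,\sfrak\}\le\mu_3$, so with $\pfrak\le\bfrak,\sfrak$ and $\pfrak\ge\mu_3$ one gets $\pfrak=\bfrak=\sfrak=\mu_3$; ``$\dfrak_\rhd\ge\lambda$'' gives $\dfrak=\rfrak=\lambda$; ``$\dfrak_\pitchfork\ge\lambda$'' with $\dfrak_\pitchfork\le\non(\Nwf)$ (Lemma~\ref{InvforPitchfork}) gives $\non(\Nwf)=\lambda$; and $\cfrak=\lambda$. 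Finally $\cov(\Mwf)=\non(\Mwf)=\kappa$: the real $e_\alpha$ added at an $\Eor$-stage $\alpha$ is eventually different from every real of $V^5_\alpha$, so $V^5_\alpha\cap\omega^\omega\subseteq(\eqcirc)^{e_\alpha}$ (Example~\ref{SubsecNonMeag}), and it avoids every meager Borel set coded in $V^5_\alpha$; since the $\Eor$-stages are cofinal in $\kappa$ and (by ccc) every real and every meager Borel set of $V^5$ appears at some stage $<\kappa$, the $\kappa$ meager sets $(\eqcirc)^{e_\alpha}$ cover $\omega^\omega$ (so $\cov(\Mwf)\le\kappa$), the $\kappa$ reals $\{e_\alpha\}$ form a non-meager set (so $\non(\Mwf)\le\kappa$), while any $<\kappa$ many meager Borel sets, resp.\ reals, lie in a single $V^5_\beta$ and are hence avoided by, resp.\ contained in $(\eqcirc)^{e_\gamma}$ for, the next $\Eor$-real, giving $\cov(\Mwf)\ge\kappa$ and $\non(\Mwf)\ge\kappa$. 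The main obstacle is exactly this eventually different iteration: it is what forces $\cov(\Mwf)$ and $\non(\Mwf)$ down to $\kappa$ without adding dominating reals, but it is \emph{not} known to preserve $\sfrak$ small via $\varpropto$; routing the argument through $\rhd$ (Lemma~\ref{PresPlusUnbd->PresRhd}) resolves this and, as a bonus, fuses the preservation of $\dfrak$ and $\rfrak$ large into the one statement ``$\dfrak_\rhd\ge\lambda$''. The remaining points — that the finite support products of size $\lambda$ of small posets satisfy the required $(+)$-properties and supply the Cohen reals, and the cardinal arithmetic for $\mathbf{GCH}_\lambda$ — go as in the proofs of Theorems~\ref{CovM-large} and~\ref{NonN,r-large}, using $\cf(\lambda)\ge\mu_3$.
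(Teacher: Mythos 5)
Your overall plan is the same as the paper's: start from a model as in Theorem~\ref{CovM-large} and run a length-$\kappa$ fsi over it, cycling $\Eor$ with size-$\lambda$ finite support products of the small posets, with Lemma~\ref{PresPlusUnbd->PresRhd} doing the work of preserving the $\rhd$-side (exactly the point the paper emphasizes). One cosmetic difference: the paper's iteration also cycles in a fsp of subalgebras of $\Dor$ of size $<\mu_3$; you omit this. That omission is harmless — $\bfrak\geq\mu_3$ already follows from $\pfrak\geq\mu_3$ (supplied by the Mathias steps) via $\pfrak\leq\add(\Mwf)\leq\bfrak$ — so the $\Dor$-subalgebra block is redundant and your construction still achieves all the required values.

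There is, however, a genuine slip in your last paragraph. You assert that the $\Eor$-generic $e_\alpha$ ``avoids every meager Borel set coded in $V^5_\alpha$'' and use this both for $\cov(\Mwf)\geq\kappa$ (the next $\Eor$-real avoids $<\kappa$ given meager sets) and for $\non(\Mwf)\leq\kappa$ (the family $\{e_\alpha\}$ is non-meager). But the $\Eor$-generic is \emph{not} a Cohen real over $V^5_\alpha$: if a side condition $F$ contains, say, the constant-zero function, then $e_\alpha(n)\neq 0$ for all large $n$, so $e_\alpha$ lands in the meager set $\{f:\forall^\infty n\, (f(n)\neq 0)\}$, which is coded in $V^5_\alpha$. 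What the $\Eor$-generic \emph{does} give you is the other two inequalities: $V^5_\alpha\cap\omega^\omega\subseteq(\eqcirc)^{e_\alpha}$ yields $\cov(\Mwf)\leq\kappa$, and any $<\kappa$ reals, appearing at some stage $\beta<\kappa$, are all contained in the meager set $(\eqcirc)^{e_\gamma}$ at the next $\Eor$-stage $\gamma>\beta$, giving $\non(\Mwf)\geq\kappa$. For $\cov(\Mwf)\geq\kappa$ and $\non(\Mwf)\leq\kappa$ you should instead invoke the $\kappa$-many Cohen reals that the iteration supplies — from the finite support products (each is a fsp of many nontrivial posets) or from the fsi at limit stages of countable cofinality — and which \emph{do} avoid all old meager sets. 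This is a localized fix and the rest of the argument stands.
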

\begin{proof}
   Start with a model $V$ obtained by Theorem \ref{CovM-large}. Perform a fsi $\langle\Por_\alpha,\Qor_\alpha\rangle_{\alpha<\kappa}$ such that
   \begin{itemize}
      \item for $\alpha\equiv0\modulo5$, $\Qnm_\alpha=\Enm$ ($\Por_\alpha$-name for $\Eor$),
      \item for $\alpha\equiv1\modulo5$, $\Qnm_\alpha$ is the $\Por_\alpha$-name for the fsp of size $\lambda$
            of \emph{all} the subalgebras of $\Aor$ of size $<\mu_1$ in any $\Por_\alpha$-generic extension of $V$,
      \item for $\alpha\equiv2\modulo5$, $\Qnm_\alpha$ is the $\Por_\alpha$-name for fsp of size $\lambda$
            of \emph{all} the subalgebras of $\Bor$ of size $<\mu_2$ in any $\Por_\alpha$-generic extension of $V$,
      \item for $\alpha\equiv3\modulo5$, $\Qnm_\alpha$ is the $\Por_\alpha$-name for fsp of size $\lambda$
            of \emph{all} the subalgebras of $\Dor$ of size $<\mu_3$ in any $\Por_\alpha$-generic extension of $V$, and
      \item for $\alpha\equiv4\modulo5$, $\Qnm_\alpha$ is the $\Por_\alpha$-name for fsp of size $\lambda$
          of $\Mor_\Fwf$ for \emph{all} the filter bases $\Fwf$ of size $<\mu_3$ in any $\Por_\alpha$-generic extension of $V$.
   \end{itemize}
   By Lemmas \ref{smallPlus} and \ref{PresPlusUnbd->PresRhd}, $(+^{\mu_3}_{\Por_\kappa,\rhd})$ holds.
\end{proof}
\begin{theorem}\label{dfrak-large}
   Assume $\cf(\lambda)\geq\mu_2$. It is consistent with $\thzfc$ that $\add(\Nwf)=\mu_1$, $\pfrak=\bfrak=\sfrak=\mu_2$, $\cov(\Nwf)=\non(\Mwf)=\cov(\Mwf)=\non(\Nwf)=\kappa$, $\dfrak=\rfrak=\cfrak=\lambda$ and $\mathbf{GCH}_\lambda$.
\end{theorem}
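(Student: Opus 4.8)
The plan is to follow the template of the proof of Theorem~\ref{dfrak,NonN,r-large}, with $\mu_2$ playing the role that $\mu_3$ plays there (which is why the hypothesis $\cf(\lambda)\geq\mu_2$ suffices) plus one extra ingredient to raise $\cov(\Nwf)$ and $\non(\Nwf)$ to $\kappa$. Since $\add(\Mwf)=\min\{\bfrak,\cov(\Mwf)\}=\mu_2$, $\cof(\Mwf)=\max\{\dfrak,\non(\Mwf)\}=\lambda$, $\cof(\Nwf)$ is between $\cof(\Mwf)=\lambda$ and $\cfrak=\lambda$, and $\rfrak\geq\cov(\Mwf)=\kappa$, it is enough to obtain, simultaneously, $\add(\Nwf)=\mu_1$, $\pfrak=\bfrak=\sfrak=\mu_2$, $\cov(\Mwf)=\non(\Mwf)=\cov(\Nwf)=\non(\Nwf)=\kappa$, $\dfrak=\cfrak=\lambda$, $\rfrak\geq\lambda$ and $\mathbf{GCH}_\lambda$; the rest of Cichon's diagram then follows.

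First I would build a model $V'$ exactly as the model $V^4$ in the proof of Theorem~\ref{CovM-large} is built, but replacing $\mu_3$ throughout by $\mu_2$ (legitimate, as the only hypothesis used is $\cf(\lambda)\geq\mu_2$): thus $V'$ satisfies $\mathbf{GCH}_\lambda$, $\cfrak=\dfrak=\rfrak=\lambda$, $\add(\Nwf)=\mu_1$, $\pfrak=\sfrak=\bfrak=\mu_2$, and, with $C=\omega^\omega\cap V^3$ and $B=[\omega]^\omega\cap V^3$ (both of size $\mu_2$) and some $A\subseteq\omega^\omega$ of size $\mu_1$, the properties $(\blacktriangle,\in^*,A,\mu_1)$, $(\blacktriangle,<^*,C,\mu_2)$ and $(\blacktriangle,\varpropto,B,\mu_2)$ hold. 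Over $V'$ I would then run a finite support iteration $\langle\Por_\alpha,\Qnm_\alpha\rangle_{\alpha<\kappa}$ whose iterands cycle, by book-keeping, through: $\Eor$; $\Bor$ (full random forcing); the finite support product, of length $\lambda$, of all subalgebras of $\Aor$ of size $<\mu_1$ of the current extension; the finite support product, of length $\lambda$, of all subalgebras of $\Dor$ of size $<\mu_2$; and $\Mor_\Fwf$ for all filter bases $\Fwf$ of size $<\mu_2$ (which is small up to forcing equivalence, so Lemma~\ref{smallPlus} applies to it). The first two families are iterated cofinally so that the intermediate models become meager, resp.\ null, cofinally in $\kappa$; the other three are the ``preserving'' blocks.

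The bulk of the work is then the verification, routine by the methods of Section~\ref{SecPres} and the proofs of Theorems~\ref{CovM-large}--\ref{dfrak,NonN,r-large}. Lemmas~\ref{smallPlus}, \ref{EvDiffPlus}, \ref{centeredAddNull}, \ref{spamPlus}, Example~\ref{SubsecUnbd} and Theorem~\ref{preservPlus} give $(+^{\mu_1}_{\Por_\kappa,\in^*})$ and $(+^{\mu_2}_{\Por_\kappa,<^*})$, so Lemma~\ref{PreserTriangle} preserves $(\blacktriangle,\in^*,A,\mu_1)$, $(\blacktriangle,<^*,C,\mu_2)$ and ``$\dfrak\geq\lambda$'', giving $\add(\Nwf)=\mu_1$, $\bfrak=\mu_2$ and $\dfrak=\lambda$; $\pfrak\geq\mu_2$ (hence $\sfrak\geq\mu_2$) follows from the cofinally many Mathias generics and $\cfrak=\lambda$ is preserved; and $\cov(\Mwf)=\non(\Mwf)=\kappa$, resp.\ $\cov(\Nwf)=\non(\Nwf)=\kappa$, come from iterating $\Eor$, resp.\ $\Bor$, cofinally, exactly as the computation of $\cov(\Mwf)=\non(\Mwf)=\kappa$ is carried out in the proof of Theorem~\ref{dfrak,NonN,r-large} (lower bounds from the Cohen reals appearing at limit stages, upper bounds because each intermediate model of size $<\kappa$ is absorbed into a single later meager, resp.\ null, set). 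The step I expect to be the real obstacle is the preservation of $(\blacktriangle,\varpropto,B,\mu_2)$ and of ``$\rfrak\geq\lambda$'' (which give $\sfrak=\mu_2$ and $\rfrak=\lambda$): by the discussion after Example~\ref{SubsecSplit} any forcing adding a random real fails $(+_{\cdot,\varpropto})$, so iterating $\Bor$ cofinally directly threatens these. The crux is therefore to verify the \emph{weak} form $(+^{\mu_2}_{\cdot,\varpropto})$ for $\Bor$ — and for $\Eor$, for which not even $(+_{\Eor,\varpropto})$ is known — a hope that the counterexample there does not refute, since it produces only a set splitting a \emph{countable} family while $(+^{\mu_2}_{\cdot,\varpropto})$ only asks for a witness of size $<\mu_2$; should this fail, the fallback is to carry out the construction as a matrix iteration of height $\kappa$ with $\sqsubset=\pitchfork$ (Section~\ref{SecMatIt}), where $\kappa\leq\non(\Nwf)$ comes from Corollary~\ref{MainMatrixConsq} and the remark after it and $\cov(\Nwf)\leq\bfrak_\pitchfork\leq\kappa$ from the $\kappa$ vertical Cohen reals forming a $\pitchfork$-unbounded family, cofinal random copies still being needed along the top row for the remaining two bounds.
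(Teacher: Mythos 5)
Your overall template (start from the model of Theorem~\ref{CovM-large} with $\mu_3=\mu_2$, then run a length-$\kappa$ fsi mixing full random forcing with small blocks of $\Aor$-subalgebras, $\Dor$-subalgebras, and Mathias posets) matches the paper, and your Cichon bookkeeping is right. But you have correctly identified the crux — preserving $\sfrak\leq\mu_2$ and $\rfrak\geq\lambda$ through cofinally many copies of $\Bor$ — and then failed to resolve it. The paper's resolution is to abandon $\varpropto$ entirely and work with the finitely-splitting relation $\rhd$ of Example~\ref{SubsecFinSp}: one only needs $(+^{\mu_2}_{\Por_\kappa,\rhd})$, which holds because every iterand satisfies $(+^{\mu_2}_{\cdot,<^*})$ (small posets via Lemma~\ref{smallPlus}; $\Bor$ because it is $\omega^\omega$-bounding), and $(+_{\Por,<^*})\Rightarrow(+_{\Por,\rhd})$ by Lemma~\ref{PresPlusUnbd->PresRhd}. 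Since $\bfrak_\rhd=\max\{\bfrak,\sfrak\}$ and $\dfrak_\rhd=\min\{\dfrak,\rfrak\}$, preserving a $\rhd$-version of $(\blacktriangle,\cdot,\cdot,\mu_2)$ and ``$\dfrak_\rhd\geq\lambda$'' via Lemma~\ref{PreserTriangle} simultaneously yields $\bfrak,\sfrak\leq\mu_2$ and $\dfrak,\rfrak\geq\lambda$, exactly what you need.

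Your two proposed workarounds do not substitute for this. Hoping that $(+^{\mu_2}_{\Bor,\varpropto})$ survives the counterexample is unfounded: the name $\dot{x}$ there is killed by \emph{any} $a$ with $|a\cap I_n|\leq1$ that splits the witnessing family, and the countability of $\{z_n\}$ plays no special role in the Borel--Cantelli estimate; nothing about enlarging $Y$ to size $<\mu_2$ removes the obstruction, and the paper in fact asserts flatly that any poset adding a random real fails $(+_{\cdot,\varpropto})$. The matrix-iteration fallback with $\sqsubset=\pitchfork$ gives bounds on $\bfrak_\pitchfork$ and $\dfrak_\pitchfork$ (hence on $\cov(\Nwf)$, $\non(\Mwf)$, $\non(\Nwf)$), not on $\sfrak$ or $\rfrak$, so it does not deliver $\sfrak=\mu_2$ and $\rfrak=\lambda$ either. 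Finally, iterating $\Eor$ is unnecessary here: the cofinal Cohen reals at limit stages together with the cofinal random reals already give $\cov(\Mwf)=\non(\Mwf)=\cov(\Nwf)=\non(\Nwf)=\kappa$, which is why the paper's iteration cycles only through $\Bor$, the $\Aor$- and $\Dor$-subalgebras, and Mathias forcing.
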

\begin{proof}
   Start with a model $V$ of Theorem \ref{CovM-large} with $\mu_3=\mu_2$. Perform a fsi $\langle\Por_\alpha,\Qor_\alpha\rangle_{\alpha<\kappa}$ such that
   \begin{itemize}
      \item for $\alpha\equiv0\modulo4$, $\Qnm_\alpha=\Bnm$ ($\Por_\alpha$-name for $\Bor$),
      \item for $\alpha\equiv1\modulo4$, $\Qnm_\alpha$ is the $\Por_\alpha$-name for the fsp of size $\lambda$
            of \emph{all} the subalgebras of $\Aor$ of size $<\mu_1$ in any $\Por_\alpha$-generic extension of $V$,
      \item for $\alpha\equiv2\modulo4$, $\Qnm_\alpha$ is the $\Por_\alpha$-name for fsp of size $\lambda$
            of \emph{all} the subalgebras of $\Dor$ of size $<\mu_2$ in any $\Por_\alpha$-generic extension of $V$, and
      \item for $\alpha\equiv3\modulo4$, $\Qnm_\alpha$ is the $\Por_\alpha$-name for fsp of size $\lambda$
            of $\Mor_\Fwf$ for \emph{all} the filter bases $\Fwf$ of size $<\mu_2$ in any $\Por_\alpha$-generic extension of $V$.
   \end{itemize}
   Note that $(+^{\mu_2}_{\Por_\kappa,\rhd})$ holds.
\end{proof}
Now we turn into the consistency results that come from constructions of matrix iterations as explained in Context \ref{ContextMatrix}.  These correspond to extensions of the applications done in \cite[Sect. 6]{mejia} for the cardinal invariants in Cichon's diagram, but including, when possible, values for the cardinal invariants $\pfrak$, $\sfrak$, $\rfrak$ and $\ufrak$. Theorem \ref{MatrixCont-large} corresponds, respectively, to the results of \cite[Subsect. 6.1]{mejia}. In the same way, Theorem \ref{MatrixCofN-large} corresponds to \cite[Subsect. 6.2]{mejia} and Theorem \ref{MatrixNonN-large} corresponds to
\cite[Subsect. 6.3]{mejia}. More explicitly, the proof of each of the results that follows uses a matrix iteration that extend the one of its corresponding result in \cite[Sect. 6]{mejia} by including Mathias forcing and Laver forcing with a filter base (or with an ultrafilter) in its construction. Discussions about some values we did not get are included in Section \ref{SecQ}.\\
Define $t:\kappa\nu\to\kappa$ such that $t(\kappa\delta+\alpha)=\alpha$ for
$\delta<\nu$ and $\alpha<\kappa$. The product $\kappa\nu$, as all the products we are going to consider from now on, denotes ordinal
product. Also, fix a bijection $g:\lambda\to\kappa\times\lambda$ and
put $(\cdot)_0:\kappa\times\lambda\to\kappa$ the projection onto the first coordinate.
\begin{theorem}\label{MatrixCont-large}
   Assume that $\cf(\lambda)\geq\aleph_1$. It is consistent with $\thzfc$ that $\add(\Nwf)=\pfrak=\non(\Mwf)=\nu$, $\cfrak=\lambda$,
   $\mathbf{GCH}_\lambda$ and that one of the following statements hold.
   \begin{enumerate}[(a)]
      \item $\cov(\Mwf)=\cof(\Nwf)=\rfrak=\kappa$.
      \item $\cov(\Mwf)=\nu$ and $\dfrak=\rfrak=\non(\Nwf)=\cof(\Nwf)=\kappa$.
      \item $\non(\Nwf)=\ufrak=\nu$, $\dfrak=\cof(\Nwf)=\kappa$.
      \item $\non(\Nwf)=\nu$, $\dfrak=\rfrak=\cof(\Nwf)=\kappa$.
      \item $\cof(\Mwf)=\nu$ and $\non(\Nwf)=\rfrak=\cof(\Nwf)=\kappa$.
      \item $\cof(\Mwf)=\ufrak=\nu$ and $\non(\Nwf)=\cof(\Nwf)=\kappa$.
      \item $\cof(\Mwf)=\non(\Nwf)=\ufrak=\nu$, $\cof(\Nwf)=\kappa$.
   \end{enumerate}
\end{theorem}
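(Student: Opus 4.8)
The plan is to prove each of (a)--(g) by a matrix iteration of ccc posets in the sense of Context \ref{ContextMatrix}. In every case the skeleton is the corresponding construction of \cite[Subsect.~6.1]{mejia} --- which already settles the Cichon invariants --- augmented by placing along some of the columns Mathias forcing with an ultrafilter (the columns in $U$) or Laver forcing with an ultrafilter (the columns in $L$), built recursively as prescribed in Context \ref{ContextMatrix}(2)(ii)--(iii); these columns are what additionally fix $\pfrak$, $\sfrak$, $\rfrak$ and, in (c), (f) and (g), $\ufrak$. The vertical iteration is, as demanded by Context \ref{ContextMatrix}, the fsi of Cohen forcing with its reals $\dot{c}_\alpha$; the vertical length $\delta$ (chosen with $\cf(\delta)=\kappa$ or $\cf(\delta)=\nu$ according to the case), the horizontal length $\gamma$, the delay function $\Delta$ and the book-keeping are set up via the functions $t$ and $g$ exactly as in \cite{mejia}, the only new feature being that some book-kept tasks now concern the ultrafilter columns; finite-support products of size $\lambda$ are inserted inside the iterands, as in the proof of Theorem \ref{NonN,r-large}, so as to force $\cfrak=\lambda$ while keeping $\delta$ and $\gamma$ of cofinality at least $\nu$. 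We may assume $V\models\mathbf{GCH}$ and, exactly as at the start of the proof of Theorem \ref{CovM-large}, may take as ground model of the iteration a ccc extension in which $\add(\Nwf)=\pfrak=\non(\Mwf)=\nu$, $\cfrak=\nu$, $\mathbf{GCH}_\nu$ and the witnessing properties $(\blacktriangle,\in^*,A,\nu)$, $(\blacktriangle,\eqcirc,C,\nu)$ (and whichever of $(\blacktriangle,\pitchfork,B,\nu)$, $(\blacktriangle,\rhd,D,\nu)$ the case needs) hold for sets of size $\nu$; this also supplies the subalgebras of $\Aor$ and of $\Bor$ of size $<\nu$ and the filter bases of size $<\nu$ that the book-keeping feeds into the $S$- and $T$-columns.

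\textbf{Upper bounds.} The common ceilings $\add(\Nwf)\leq\nu$ and $\non(\Mwf)\leq\nu$, as well as the case-specific ceilings (namely $\cov(\Mwf)\leq\nu$ in (b), $\non(\Nwf)\leq\nu$ in (c) and (d), $\cof(\Mwf)\leq\nu$ in (e), (f) and (g)), are each of the form $\bfrak_\sqsubset\leq\nu$ (or $\dfrak_\sqsubset\leq\nu$) for a suitable $\sqsubset$ among $<^*,\eqcirc,\pitchfork,\in^*,\rhd$, so each follows from preserving the matching property $(\blacktriangle,\sqsubset,\cdot,\nu)$ through $\Por_{\delta,\gamma}$. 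One checks $(+^\nu_{\cdot,\sqsubset})$ for every iterand --- the small $T$-posets by Lemma \ref{smallPlus}, the random subalgebras also by Lemma \ref{spamPlus}, the $\sigma$-centered Mathias and Laver posets by Lemmas \ref{centeredFork} and \ref{centeredAddNull}, the Suslin $\Sor_\xi$ by the lemmas of Subsection \ref{SubsecPartCase}, and for $\rhd$ by Lemma \ref{PresPlusUnbd->PresRhd} --- whence Theorem \ref{preservPlus} along each horizontal row, together with completeness of the embeddings $\Por_{\alpha,\xi}\preceq\Por_{\delta,\gamma}$, gives $(+^\nu_{\Por_{\delta,\gamma},\sqsubset})$, and Lemma \ref{PreserTriangle} then preserves the ceiling. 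After that $\pfrak\leq\nu$ is automatic from $\pfrak\leq\sfrak\leq\non(\Mwf)=\nu$ (the $\thzfc$-inequalities listed in Section \ref{SecIntro}), and similarly $\cov(\Nwf)=\add(\Mwf)=\bfrak=\nu$ are forced by the values already fixed. The ceilings $\ufrak\leq\nu$ in (c), (f) and (g) hold because the book-keeping arranges the ultrafilter on the relevant column to be generated by the $\nu$-many Mathias (resp.\ Laver-range) reals produced along it, Lemmas \ref{mathiasStar} and \ref{LaverStarPitchfork} being what keep the tower of ultrafilters coherent with respect to $\preceq_M$ through the iteration. Finally $\cfrak\leq\lambda$ and $\mathbf{GCH}_\lambda$ follow from $|\Por_{\delta,\gamma}|\leq\lambda$ and the usual nice-name count using $\mathbf{GCH}$ in $V$.

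\textbf{Lower bounds.} The floor $\nu\leq\add(\Nwf)$ (whence also $\nu\leq\cov(\Nwf),\non(\Mwf),\pfrak$, using the ceilings above) holds because the book-keeping makes every subalgebra of $\Aor$ of size $<\nu$ and every filter base of size $<\nu$ occur cofinally, so that any family of null sets of size $<\nu$, resp.\ any filter base of size $<\nu$, in the final model is captured by a later such poset --- available, by Theorem \ref{realsmatrixbelow} in the vertical direction and the standard capture property of finite-support iterations in the horizontal one, below a suitable stage since $\delta$ and $\gamma$ were chosen of cofinality at least $\nu$ --- and hence its union is made null, resp.\ it acquires a pseudo-intersection. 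The values equal to $\kappa$ --- $\cov(\Mwf),\rfrak$ in (a); $\dfrak,\rfrak,\non(\Nwf),\cof(\Nwf)$ in (b); $\dfrak,\cof(\Nwf)$ in (c); $\dfrak,\rfrak,\cof(\Nwf)$ in (d); $\non(\Nwf),\rfrak,\cof(\Nwf)$ in (e); $\non(\Nwf),\cof(\Nwf)$ in (f) and (g) --- are obtained on the lower-bound side from Corollary \ref{MainMatrixConsq}, with $\sqsubset$ chosen to fit the case ($<^*$ for $\dfrak$, $\pitchfork$ for $\non(\Nwf)$ by the remark after that corollary, $\rhd$ for $\rfrak$, $\in^*$ for $\cof(\Nwf)$): taking $\delta$ of cofinality $\kappa$, Theorem \ref{PresvUnbdg} keeps the Cohen reals $\sqsubset$-unbounded over the relevant $V_{\alpha,\gamma}$, so no set of size $<\kappa$ is $\sqsubset$-dominating and the invariant is $\geq\kappa$. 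The matching upper bound $\leq\kappa$ is secured, exactly as in \cite[Subsect.~6.1]{mejia}, by admitting above the $\kappa$-th level of the pertinent iteration only iterands that add no $\sqsubset$-unbounded real (in particular $\omega^\omega$-bounding or eventually-different ones, so that the failure of $(+_{\Bor,\varpropto})$ causes no trouble: one works with $\rhd$ as in Lemma \ref{PresPlusUnbd->PresRhd}). The remaining values $\cof(\Mwf)=\nu$ in (e), (f) and (g) then hold because there $\dfrak=\non(\Mwf)=\nu$.

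\textbf{Main obstacle.} The genuinely delicate step is carrying all this out inside a single matrix iteration: by Theorem \ref{PresvUnbdg} only one relation $\sqsubset$ is available, and $U\neq\varnothing$ precludes $L\neq\varnothing$ (while restricting $\sqsubset$ to $<^*$ or $\in^*$), whereas $L\neq\varnothing$ forces $\sqsubset=\pitchfork$. So in the cases wanting both a small $\ufrak$ and a large $\dfrak$ --- most acutely (c) --- the dominating reals must be produced through the \emph{Laver} columns with $\sqsubset=\pitchfork$, which is exactly the reason Lemma \ref{LaverStarPitchfork} is needed, and the Cichon values must then be re-derived via $\cov(\Nwf)\leq\bfrak_\pitchfork$ and $\dfrak_\pitchfork\leq\non(\Nwf)$ (Lemma \ref{InvforPitchfork}). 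The second delicate point is making the intermediate value $\kappa$ come out exactly --- forced large by the block structure and Corollary \ref{MainMatrixConsq}, but kept $\leq\kappa$ by withholding the offending reals above level $\kappa$ --- together with keeping the $U$- and $L$-ultrafilters coherent through limit stages; this is where the delay function $\Delta$ and the book-keeping via $t$ and $g$ carry the weight.
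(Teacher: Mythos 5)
Your overall strategy --- a matrix iteration as in Context~\ref{ContextMatrix}, with skeleton taken from \cite[Subsect.~6.1]{mejia} and augmented by ultrafilter columns to additionally control $\pfrak$, $\sfrak$, $\rfrak$, $\ufrak$ --- is the approach the paper takes, and you correctly identify the constraint that Theorem~\ref{PresvUnbdg} ties you to a single $\sqsubset$ and forbids mixing $U$- and $L$-columns. However, there are several concrete places where the details diverge in a way that breaks the proposal.

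First, you place the Mathias-with-ultrafilter posets in $U$-columns (\lcom built recursively as prescribed in Context~\ref{ContextMatrix}(2)(ii)\rcom). In the paper's proof of item~(b) --- the only item proved in detail --- these posets sit instead in $T$-columns at coordinates $\lambda\rho+2$, with a \emph{fixed} $\Por_{t(\rho),\xi}$-name $\dot{\Uwf}_\rho$ for an ultrafilter, while $U=L=\varnothing$. This is forced: the case needs $\sqsubset=\rhd$ for Corollary~\ref{MainMatrixConsq} to give $\kappa\leq\dfrak_\rhd=\min\{\dfrak,\rfrak\}$, but $U\neq\varnothing$ would restrict $\sqsubset$ to $<^*$ or $\in^*$ via Theorem~\ref{PresvUnbdg}(ii). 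Your arrangement cannot simultaneously use $U$-columns and $\rhd$.

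Second, and more importantly, you never actually argue $\rfrak\leq\kappa$ (needed in (a), (b), (d), (e)). You secure the $\leq\kappa$ ceilings \lcom exactly as in \cite[Subsect.~6.1]{mejia}, by admitting above the $\kappa$-th level only iterands that add no $\sqsubset$-unbounded real,\rcom but \cite{mejia} treats only Cichon invariants, not $\rfrak$; moreover, that preservation idea does not transfer to $\rfrak$ since $\varpropto$ is not transitive (the iterands do add reals splitting any fixed family, so a reaping family from a lower level is not automatically preserved). The paper's actual argument is Claim~\ref{claimSplitting}: since each $\dot{\Uwf}_\rho$ is an ultrafilter in $V_{t(\rho),\lambda\rho+2}$ and its Mathias real $m_\rho$ is a pseudo-intersection, any family $\Cwf$ of fewer than $\nu$ sets appearing below some stage is $\varpropto$-bounded by a suitable $m_\rho$, whence $\{m_\rho : \rho<\kappa\nu\}$ is a reaping family of size $\kappa$. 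This is a new, constructive argument specific to this paper and is entirely absent from the proposal.

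Third, the route you sketch for item~(c) --- Laver columns with $\sqsubset=\pitchfork$ --- cannot give (c): Corollary~\ref{MainMatrixConsq} with $\sqsubset=\pitchfork$ and Lemma~\ref{InvforPitchfork} yield $\kappa\leq\dfrak_\pitchfork\leq\non(\Nwf)$, contradicting the desired $\non(\Nwf)=\nu$ whenever $\nu<\kappa$. Item~(c) must instead be handled with $U$-columns and $\sqsubset=<^*$ or $\in^*$; the $L$-columns with $\pitchfork$ are for Theorem~\ref{MatrixCofN-large}, where $\non(\Nwf)=\kappa$.

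Finally, a smaller discrepancy: the paper's ground model for~(b) is the one from Theorem~\ref{CovM-large} with $\mu_1=\mu_2=\mu_3=\aleph_1$; the matrix iteration itself pushes $\add(\Nwf)$, $\pfrak$, $\non(\Mwf)$ up to $\nu$, rather than starting from a model where those already equal $\nu$.
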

We prove only one item of this Theorem, so the reader will know how to prove the other items by reference of its corresponding result in \cite{mejia} by extending the matrix iteration construction in a similar way. The same is done for the following two theorems of this section.
\begin{proof}
   We prove item (b). Start with $V$ a model from Theorem \ref{CovM-large} with $\mu_1=\mu_2=\mu_3=\aleph_1$. According to Context \ref{ContextMatrix}, construct a matrix iteration
   $\Por_{\kappa,\lambda\kappa\nu}=
   \langle\langle\Por_{\alpha,\xi},\Qnm_{\alpha,\xi}\rangle_{\xi<\lambda\kappa\nu}\rangle_{\alpha\leq\kappa}$ such that $S=\cj{\lambda\rho}{\rho<\kappa\nu}$, $U=L=\varnothing$ and the following for each $\rho<\kappa\nu$.
   \begin{enumerate}[(i)]
      \item $\Sor_{\lambda\rho}=\Eor$.
      \item If $\xi=\lambda\rho+1$, $\Anm_\rho$ is a $\Por_{t(\rho),\xi}$-name for $\Aor^{V_{t(\rho),\xi}}$ and
            \[\Qnm_{\alpha,\xi}=\left\{
               \begin{array}{ll}
                   \mathds{1} & \textrm{if $\alpha\leq t(\rho)$,}\\
                   \Anm_\rho & \textrm{if $\alpha>t(\rho)$.}
               \end{array}
               \right.\]
      \item If $\xi=\lambda\rho+2$, $\dot{\Uwf}_\rho$ is a $\Por_{t(\rho),\xi}$-name for an ultrafilter on $\omega$ and
            \[\Qnm_{\alpha,\xi}=\left\{
               \begin{array}{ll}
                   \mathds{1} & \textrm{if $\alpha\leq t(\rho)$,}\\
                   \Mor_{\Ufnm_\rho} & \textrm{if $\alpha>t(\rho)$.}
               \end{array}
               \right.\]
   \end{enumerate}
   For each $\alpha<\kappa$, fix a sequence $\langle\dot{\Fwf}^\rho_{\alpha,\gamma}\rangle_{\gamma<\lambda}$ of $\Por_{\alpha,\lambda\rho+3}$-names for \emph{all} the filter bases of size $<\nu$.
   \begin{enumerate}[(i)]
      \setcounter{enumi}{3}
      \item If $\xi=\lambda\rho+3+\epsilon$ ($\epsilon<\lambda$), put
      \[\Qnm_{\alpha,\xi}=\left\{
                            \begin{array}{ll}
                               \mathds{1}, & \textrm{if $\alpha\leq(g(\epsilon))_0$,}\\
                               \Mor_{\dot{\Fwf}^\rho_{g(\epsilon)}}, & \textrm{if $\alpha>(g(\epsilon))_0$.}
                            \end{array}
                     \right.\]
   \end{enumerate}
   By the same argument as in \cite[Thm. 12]{mejia}, all the statements, except $\nu\leq\pfrak$ and $\rfrak=\kappa$, hold in
   $V_{\kappa,\lambda\kappa\nu}$. For $\nu\leq\pfrak$, if $\Fwf$ is
   a filter base of size $<\nu$, by Theorem \ref{realsmatrixbelow} find $\alpha<\kappa$ and $\rho<\kappa\nu$ such that $\Fwf\in V_{\alpha,\lambda\rho}$. Then, there exists a $\gamma<\lambda$ such that $\Fwf=\Fwf^\rho_{\alpha,\gamma}$, so the Mathias real added by $\Mor_{\Fwf^\rho_{\alpha,\gamma}}$ is a pseudo-intersection of $\Fwf$.\\
   Lemma \ref{PresPlusUnbd->PresRhd} and Corollary \ref{MainMatrixConsq} gives $\kappa\leq\dfrak_\rhd=\min\{\dfrak,\rfrak\}$. For each $\rho<\kappa\nu$, let $m_\rho$ be the pseudo-intersection of $\Uwf_\rho$ added by $\Mor_{\Uwf_\rho}$.
   \begin{clm}\label{claimSplitting}
       Every family of $<\nu$ many infinite subsets of $\omega$ is $\varpropto$-bounded by some $m_\rho$
   \end{clm}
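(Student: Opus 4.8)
The plan is to produce, for any given family of fewer than $\nu$ infinite subsets of $\omega$, a \emph{single} Mathias real $m_\rho$ that $\varpropto$-bounds every member of the family at once. The point is that if the ultrafilter $\dot{\Uwf}_\rho$ already decides a set $z$, i.e. $z\in\Uwf_\rho$ or $\omega\menos z\in\Uwf_\rho$, then, since $m_\rho$ is a pseudo-intersection of $\Uwf_\rho$, we get $m_\rho\subseteq^* z$ or $m_\rho\subseteq^*\omega\menos z$, which is exactly $z\varpropto m_\rho$. So the whole task reduces to choosing $\rho<\kappa\nu$ such that the entire family already lives in the model $V_{t(\rho),\lambda\rho+2}$ over which $\dot{\Uwf}_\rho$ is defined.

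To arrange this I would first reflect the family down to a bounded corner of the matrix. Let $\{z_i:i<\theta\}$ with $\theta<\nu$ be infinite subsets of $\omega$ in $V_{\kappa,\lambda\kappa\nu}$. Since $\Por_{\kappa,\lambda\kappa\nu}$ is a ccc finite support iteration in the horizontal direction and $\cf(\lambda\kappa\nu)=\cf(\nu)=\nu\geq\aleph_1$, each $z_i$ has a name supported below some coordinate $\xi_i<\lambda\kappa\nu$; as $\theta<\nu=\cf(\lambda\kappa\nu)$, put $\xi^*:=\sup_{i<\theta}\xi_i<\lambda\kappa\nu$. Next, applying Theorem \ref{realsmatrixbelow}(b) (valid since $\cf(\kappa)=\kappa\geq\aleph_1$) to each $z_i$, there is $\alpha_i<\kappa$ with $z_i\in V_{\alpha_i,\xi^*}$; put $\alpha^*:=\sup_{i<\theta}\alpha_i<\kappa$, so all the $z_i$ lie in $V_{\alpha^*,\xi^*}$. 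Then I would locate $\rho$: since $\lambda\kappa\nu=\sup_{\delta<\nu}\lambda\kappa\delta$ ($\nu$ being a limit cardinal), fix $\delta^*<\nu$ with $\lambda\kappa\delta^*>\xi^*$ and set $\rho:=\kappa\delta^*+\alpha^*<\kappa\nu$, so that $t(\rho)=\alpha^*$ and $\lambda\rho=\lambda\kappa\delta^*+\lambda\alpha^*>\xi^*$. By the monotonicity built into the matrix construction (any $V_{\alpha,\xi}$ is contained in $V_{\beta,\xi'}$ when $\alpha\leq\beta$ and $\xi\leq\xi'$; see Definition \ref{DefMatrixIt}(3)) this gives $V_{\alpha^*,\xi^*}\subseteq V_{t(\rho),\lambda\rho+2}$.

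Finally, each $z_i$ now lies in $V_{t(\rho),\lambda\rho+2}$, the model in which $\dot{\Uwf}_\rho$ is interpreted as an ultrafilter on $\omega$, so for every $i<\theta$ we have $z_i\in\Uwf_\rho$ or $\omega\menos z_i\in\Uwf_\rho$. Since $m_\rho$ is the Mathias real added by $\Mor_{\Uwf_\rho}$ (which is $\Qnm_{\alpha,\lambda\rho+2}$ for every $\alpha>t(\rho)$), it is a pseudo-intersection of $\Uwf_\rho$, i.e. $m_\rho\subseteq^* A$ for all $A\in\Uwf_\rho$; hence $m_\rho\subseteq^* z_i$ or $m_\rho\subseteq^*\omega\menos z_i$ for each $i<\theta$, which is precisely $z_i\varpropto m_\rho$. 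Thus $m_\rho$ $\varpropto$-bounds the family, as required. The only step needing care is the double reflection (down in the $\xi$-coordinate and then in the $\alpha$-coordinate): both suprema stay strictly below $\lambda\kappa\nu$ and below $\kappa$ respectively exactly because $\nu$ is regular uncountable (so $\cf(\lambda\kappa\nu)=\nu>\theta$) and $\theta<\nu\leq\kappa=\cf(\kappa)$, and Theorem \ref{realsmatrixbelow}(b) is what licenses the $\alpha$-coordinate reflection; everything after that is immediate from the definition of $\varpropto$ and of Mathias forcing with an ultrafilter.
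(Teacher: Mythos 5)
Your proof is correct and takes essentially the same route as the paper: reflect the family to a corner $V_{\alpha^*,\xi^*}$ of the matrix (first horizontally using $\cf(\lambda\kappa\nu)=\nu>|\Cwf|$, then vertically via Theorem~\ref{realsmatrixbelow}(b) and $\cf(\kappa)=\kappa>|\Cwf|$), pick $\rho$ with $t(\rho)=\alpha^*$ and $\lambda\rho>\xi^*$, and use that the ultrafilter $\Uwf_\rho$ living in $V_{t(\rho),\lambda\rho+2}$ decides each member of the family, so its pseudo-intersection $m_\rho$ $\varpropto$-bounds them all. The only phrasing quibble is the parenthetical ``$\nu$ being a limit cardinal'' --- you mean limit \emph{ordinal}; $\nu$ may well be a successor cardinal --- but this does not affect the argument.
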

   \begin{proof}
       Let $\Cwf$ be such a family. By Theorem \ref{realsmatrixbelow}, we can find $\alpha<\kappa$ and $\eta<\kappa\nu$ such that
       $\Cwf\in V_{\alpha,\lambda\eta}$. Then, find $\rho\in(\eta,\kappa\nu)$ such that $t(\rho)=\alpha$. It is easy to conclude that $m_\rho$
       is a $\varpropto$-upper bound of $\Cwf$.
   \end{proof}
   This claim implies that $\rfrak\leq\kappa$.
\end{proof}
\begin{theorem}\label{MatrixCofN-large}
   Assume that $\cf(\lambda)\geq\mu_1$. It is consistent with $\thzfc$ that $\add(\Nwf)=\mu_1$, $\cov(\Nwf)=\pfrak=\non(\Mwf)=\nu$, $\cof(\Nwf)=\cfrak=\lambda$, $\mathbf{GCH}_\lambda$ and that one of the following statements hold.
   \begin{enumerate}[(a)]
      \item $\cov(\Mwf)=\cof(\Mwf)=\rfrak=\non(\Nwf)=\kappa$.
      \item $\cov(\Mwf)=\nu$ and $\non(\Nwf)=\dfrak=\rfrak=\cof(\Mwf)=\kappa$.
      \item $\cof(\Mwf)=\nu$ and $\non(\Nwf)=\rfrak=\kappa$.
      \item $\cof(\Mwf)=\ufrak=\nu$ and $\non(\Nwf)=\kappa$.
      \item $\non(\Nwf)=\ufrak=\nu$ and $\dfrak=\cof(\Mwf)=\kappa$.
      \item $\non(\Nwf)=\nu$ and $\dfrak=\rfrak=\cof(\Mwf)=\kappa$.
   \end{enumerate}
\end{theorem}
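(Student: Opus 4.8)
The plan is to prove item~(b) in detail; every other item is obtained from the corresponding result of \cite[Subsect.~6.2]{mejia} by the very same modification, exactly as Theorem~\ref{MatrixCont-large}(b) was obtained from \cite[Subsect.~6.1]{mejia}. The skeleton will be the matrix iteration that \cite[Subsect.~6.2]{mejia} uses to force $\add(\Nwf)=\mu_1$, $\cov(\Nwf)=\non(\Mwf)=\nu$, $\cof(\Nwf)=\cfrak=\lambda$, $\mathbf{GCH}_\lambda$, $\cov(\Mwf)=\nu$ and $\non(\Nwf)=\dfrak=\rfrak=\cof(\Mwf)=\kappa$; the only new clause is $\pfrak=\nu$, which I would get by enlarging that matrix with extra columns of Mathias forcing with a filter base that add pseudo-intersections of all filter bases of size $<\nu$ --- precisely the enlargement carried out in the proof of Theorem~\ref{MatrixCont-large}(b). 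Hence the bulk of the bookkeeping and of the computation of the invariants is inherited verbatim from \cite[Subsect.~6.2]{mejia}.

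Concretely, I would start from the model $V$ denoted $V^{3}$ in the proof of \cite[Thm.~2]{mejia} (equivalently, the model $V^{3}$ that appears inside the proof of Theorem~\ref{CovM-large}), built with parameters so that $V\models\add(\Nwf)=\mu_1$, $\cov(\Nwf)=\nu$, $\add(\Mwf)=\cfrak=\nu$, $\mathbf{GCH}_\nu$, and so that $V$ carries the witnesses $(\blacktriangle,\in^*,A,\mu_1)$, $(\blacktriangle,\pitchfork,B,\nu)$, $(\blacktriangle,\eqcirc,C,\nu)$ and $(\blacktriangle,\varpropto,C',\nu)$, with $|A|=\mu_1$ and $|B|=|C|=|C'|=\nu$. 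Over $V$ I would build a matrix iteration $\Por_{\delta,\gamma}$ in the style of Context~\ref{ContextMatrix}, with $\sqsubset$ fixed to be $\in^*$ and $U=L=\varnothing$, obtained from the matrix of \cite[Subsect.~6.2]{mejia} by inserting extra columns $\Qnm_{\alpha,\xi}=\Mor_{\dot{\Fwf}_\xi}$, Mathias forcing (with base-conditions, hence of size $<\nu$) where, by a book-keeping bijection exactly as in Theorem~\ref{MatrixCont-large}(b), the $\Por_{\alpha,\xi}$-names $\dot{\Fwf}_\xi$ run over all filter bases of size $<\nu$ in the relevant intermediate extensions; each such column is given the form of Context~\ref{ContextMatrix}(2)(iv), trivial below an activation level $\Delta(\xi)$ that the book-keeping also selects.

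Being $\sigma$-centered, by Lemma~\ref{centeredAddNull} the inserted columns satisfy $(+^{\mu_1}_{\cdot,\in^*})$ and $(+_{\cdot,\in^*})$, and, having size $<\nu$, by Lemma~\ref{smallPlus} they satisfy $(+^{\nu}_{\cdot,\sqsubset})$ for every $\sqsubset$, in particular $(+^{\nu}_{\cdot,\pitchfork})$ and $(+^{\nu}_{\cdot,\varpropto})$ (cf.\ also Lemma~\ref{centeredFork}). Hence, by Theorem~\ref{preservPlus} and Lemma~\ref{PreserTriangle}, the four $(\blacktriangle,\cdot)$-witnesses of $V$ remain valid in the final extension --- yielding $\add(\Nwf)\le\mu_1$, $\cov(\Nwf)\le\nu$, $\non(\Mwf)\le\nu$ and $\sfrak\le\nu$ --- and the preservation properties underlying the \cite{mejia}-computation of the remaining invariants, including whatever Theorem~\ref{PresvUnbdg} demands of each iterand, are left intact, so the rest of the verification of \cite[Subsect.~6.2]{mejia} goes through unchanged. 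The inequality $\pfrak\ge\nu$ is then exactly as in Theorem~\ref{MatrixCont-large}(b): a filter base of size $<\nu$ reflects, by Theorem~\ref{realsmatrixbelow}, into some $V_{\alpha,\xi}$ where it equals one of the $\dot{\Fwf}_\xi$, and the Mathias real of $\Mor_{\dot{\Fwf}_\xi}$ is a pseudo-intersection of it; with $\pfrak\le\add(\Mwf)\le\non(\Mwf)=\nu$ this gives $\pfrak=\nu$. For items~(d) and~(e), which prescribe $\ufrak=\nu$, one additionally uses Laver forcing with an ultrafilter in some columns, built as in Context~\ref{ContextMatrix}(2)(iii): this forces $\sqsubset$ to be $\pitchfork$ (Theorem~\ref{PresvUnbdg}(iii)) and relies on Lemma~\ref{LaverStarPitchfork} for the two matrix requirements, namely $\Lor_{\dot{\Uwf}_{\alpha,\xi}}\preceq_{V_{\alpha,\xi}}\Lor_{\dot{\Uwf}_{\beta,\xi}}$ and the $(\star,\dots,\pitchfork,\dot c_\alpha)$ preservation; arranging these columns cofinally makes $\bigcup_{\alpha,\xi}\dot{\Uwf}_{\alpha,\xi}$ a non-principal ultrafilter with a base of size $\nu$, so $\ufrak\le\nu$ (the reverse inequality $\ufrak\ge\nu$ being automatic from $\cov(\Nwf)=\nu$).

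The step I expect to be the main obstacle is keeping the choice of $\sqsubset$ compatible with the whole prescribed list of values. When only Mathias columns are inserted, $\sqsubset$ stays $\in^*$ and the argument is a clean extension of \cite{mejia}; but the items that force a small $\ufrak$ impose $\sqsubset=\pitchfork$, and one must then re-check that with $\sqsubset=\pitchfork$ the \cite{mejia}-argument still delivers the large values --- in particular those involving $\cof(\Nwf)$ and $\non(\Nwf)$, which in that case must be produced through the random and amoeba subalgebra columns and through Corollary~\ref{MainMatrixConsq} applied to $\pitchfork$ via Lemma~\ref{InvforPitchfork}, rather than through the Cohen reals and $\in^*$ --- and that the activation levels $\Delta(\xi)$ of the inserted Mathias and Laver columns are chosen so that the reflection argument of Theorem~\ref{realsmatrixbelow} still catches every filter base of size $<\nu$ (and, where relevant, every small family of infinite subsets of $\omega$, as in Claim~\ref{claimSplitting}). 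All remaining points are the routine bookkeeping of \cite[Subsect.~6.2]{mejia}.
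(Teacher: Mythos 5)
You chose to flesh out item~(b) where the paper flesh out item~(d); that is a legitimate choice since the paper explicitly says it proves only one item. However, your plan for (b) contains a genuine gap, and the paper's choice of (d) exposes it.

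You assert that the matrix of \cite[Subsect.~6.2]{mejia} already forces $\rfrak=\kappa$ and that the ``only new clause is $\pfrak=\nu$.'' That is not so: the original \cite{mejia} is about Cichon's diagram only and says nothing about $\rfrak$. The acknowledgements of the present paper make this explicit by singling out Theorem~\ref{MatrixCofN-large}(b) as one of the results that became possible only because of the new Lemma~\ref{PresPlusUnbd->PresRhd} about the relation $\rhd$. Consequently, your choice of $\sqsubset=\in^*$ in Context~\ref{ContextMatrix} cannot work for item~(b): Corollary~\ref{MainMatrixConsq} with $\in^*$ only yields $\kappa\leq\dfrak_{\in^*}=\cof(\Nwf)$, which is vacuous here since $\cof(\Nwf)=\lambda$, and it gives no lower bound on $\dfrak$, $\rfrak$, or $\non(\Nwf)$. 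For item~(b) one needs the Cohen reals preserved $\rhd$-unbounded, so that Corollary~\ref{MainMatrixConsq} gives $\kappa\leq\dfrak_{\rhd}=\min\{\dfrak,\rfrak\}$; this in turn constrains the Suslin columns, which is exactly where Lemma~\ref{PresPlusUnbd->PresRhd} enters. Moreover, the upper bound $\rfrak\leq\kappa$ does not come for free from the Mathias-with-filter-base columns you insert: it requires cofinally many Mathias-with-ultrafilter columns of type $T$ so that the resulting Mathias reals $m_\rho$ form a reaping family, exactly as in Claim~\ref{claimSplitting} inside the proof of Theorem~\ref{MatrixCont-large}(b). Your construction omits these columns entirely. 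In short, for item~(b) you are missing both the relation $\rhd$ (lower bound on $\rfrak,\dfrak$) and the Mathias-with-ultrafilter columns (upper bound on $\rfrak$); you cannot inherit either from \cite{mejia}.

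For comparison, the paper's own proof handles item~(d) with $L=\{\lambda\rho:\rho<\kappa\nu\}$ (Laver forcing with ultrafilters) and hence $\sqsubset=\pitchfork$, relying on Lemma~\ref{LaverStarPitchfork} for the $\preceq_M$ and $(\star,\dots)$ requirements, and obtaining $\ufrak\leq\nu$ because the ranges $\langle\ran(l_\eta)\rangle_{\eta<\nu}$ of the Laver reals in the top row generate an ultrafilter; your discussion of items (d),(e) describes this correctly. The residual point you flag in your last paragraph (compatibility of a single $\sqsubset$ with all the prescribed values) is indeed the crux, but for item~(b) your resolution of it --- keeping $\sqsubset=\in^*$ --- is wrong, and the fix is not cosmetic: it changes which column forcings are admissible (they must satisfy $(+_{\cdot,\rhd})$) and which extra columns are needed.
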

\begin{proof}
   We prove (d). Start with $V$ a model as in the conclusion of Theorem \ref{CovM-large} where $\mu_1=\mu_2=\mu_3$. Perform a matrix iteration
   $\Por_{\kappa,\lambda\kappa\nu}=
   \langle\langle\Por_{\alpha,\xi},\Qnm_{\alpha,\xi}\rangle_{\xi<\lambda\kappa\nu}\rangle_{\alpha\leq\kappa}$ where $S=U=\varnothing$, $L=\cj{\lambda\rho}{\rho<\kappa\nu}$, according to the following cases for $\rho<\kappa\nu$.
   \begin{enumerate}[(i)]
      \item If $\xi=\lambda\rho+1$, $\Bnm_\rho$ is a $\Por_{t(\rho),\xi}$-name for $\Bor^{V_{t(\rho),\xi}}$ and
            \[\Qnm_{\alpha,\xi}=\left\{
               \begin{array}{ll}
                   \mathds{1} & \textrm{if $\alpha\leq t(\rho)$,}\\
                   \Bnm_\rho & \textrm{if $\alpha>t(\rho)$.}
               \end{array}
               \right.\]
   \end{enumerate}
   For each $\alpha<\kappa$ fix a sequence $\langle\Anm_{\alpha,\gamma}^\rho\rangle_{\gamma<\lambda}$ of $\Por_{\alpha,\lambda\rho+2}$-names for \emph{all} the suborders of $\Aor^{V_{\alpha,\lambda\rho+2}}$ of size $<\mu_1$ and a sequence $\langle\dot{\Fwf}^\rho_{\alpha,\gamma}\rangle_{\gamma<\lambda}$ of $\Por_{\alpha,\lambda\rho+2}$-names for \emph{all} the filter bases of size $<\nu$.
   \begin{enumerate}[(i)]
      \setcounter{enumi}{1}
      \item If $\xi=\lambda\rho+2+2\epsilon$ ($\epsilon<\lambda$), put
            \[\Qnm_{\alpha,\xi}=\left\{
                            \begin{array}{ll}
                               \mathds{1}, & \textrm{if $\alpha\leq(g(\epsilon))_0$,}\\
                               \Anm^\rho_{g(\epsilon)}, & \textrm{if $\alpha>(g(\epsilon))_0$.}
                            \end{array}
                     \right.\]
      \item If $\xi=\lambda\rho+2+2\epsilon+1$ ($\epsilon<\lambda$), put
            \[\Qnm_{\alpha,\xi}=\left\{
                            \begin{array}{ll}
                               \mathds{1}, & \textrm{if $\alpha\leq(g(\epsilon))_0$,}\\
                               \Mor_{\dot{\Fwf}^\rho_{g(\epsilon)}}, & \textrm{if $\alpha>(g(\epsilon))_0$.}
                            \end{array}
                     \right.\]
   \end{enumerate}
   By similar arguments as in \cite[Thm. 18]{mejia} and Theorem \ref{MatrixCont-large}, we get that all the statements, except $\ufrak\leq\nu$, hold in $V_{\kappa,\lambda\kappa\nu}$. To see $\ufrak\leq\nu$, note that $\langle\ran(l_{\eta})\rangle_{\eta<\nu}$, where $l_\eta$ is the Laver real added by $\Lor_{\Uwf_{\kappa,\lambda\kappa\eta}}$, is a $\subseteq^*$-decreasing sequence that generates an ultrafilter.
\end{proof}
\begin{theorem}\label{MatrixNonN-large}
   Assume that $\cf(\lambda)\geq\mu_2$. It is consistent with $\thzfc$ that $\add(\Nwf)=\mu_1$, $\cov(\Nwf)=\mu_2$, $\pfrak=\non(\Mwf)=\nu$, $\non(\Nwf)=\cfrak=\lambda$, $\mathbf{GCH}_\lambda$ and that one of the following statements hold.
   \begin{enumerate}[(a)]
      \item $\cov(\Mwf)=\cof(\Mwf)=\rfrak=\kappa$.
      \item $\ufrak=\nu$ and $\dfrak=\cof(\Mwf)=\kappa$.
      \item $\cov(\Mwf)=\nu$ and $\dfrak=\rfrak=\cof(\Mwf)=\kappa$.
   \end{enumerate}
\end{theorem}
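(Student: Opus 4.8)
The plan is to follow the template of the proofs of Theorems~\ref{MatrixCont-large} and~\ref{MatrixCofN-large} and prove one item, say~(c), the other two being obtained by the same recipe (for~(a) one only swaps the global Suslin iterand for the one of the corresponding item of \cite[Subsect.~6.3]{mejia}; for~(b) one instead uses $\nu$-many $U$-columns carrying a $\subseteq^*$-decreasing chain of Mathias reals generated by a nested sequence of ultrafilters, run with $\sqsubset$ one of $<^*,\in^*$). Start with a model $V$ given by Theorem~\ref{CovM-large} with $\mu_3$ taken equal to $\mu_2$, so that the present hypothesis $\cf(\lambda)\geq\mu_2$ is exactly what that theorem needs; then in $V$ we have $\add(\Nwf)=\mu_1$, $\cov(\Nwf)=\mu_2$, $\pfrak=\non(\Mwf)=\mu_2$, $\cov(\Mwf)=\cfrak=\lambda$, $\mathbf{GCH}_\lambda$, together with the witnessing statements $(\blacktriangle,\in^*,A,\mu_1)$, $(\blacktriangle,\pitchfork,B,\mu_2)$, $(\blacktriangle,\eqcirc,\omega^\omega\cap V,\mu_2)$ and $(\blacktriangle,\varpropto,[\omega]^\omega\cap V,\mu_2)$ isolated in the proofs of Theorems~\ref{CovM-large} and~\ref{NonN,r-large}.

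Over $V$ I would construct a matrix iteration $\Por_{\kappa,\gamma}$, $\gamma=\lambda\kappa\nu$, as in Context~\ref{ContextMatrix} with $\sqsubset=\rhd$, $U=L=\varnothing$, and $S$ carrying the global Suslin ccc iterand of the \cite[Subsect.~6.3]{mejia} matrix for this item (so that $(+_{\cdot,\rhd})$ holds for it, by Lemma~\ref{PresPlusUnbd->PresRhd} or directly as in \cite{mejia}). Column~$0$ is the fsi of Cohen forcing, whence each $\dot{c}_\alpha$ ($\alpha<\kappa$) is $\rhd$-unbounded over $V_{\alpha,0}$. Using the map $t:\kappa\nu\to\kappa$, split $\gamma$ into the blocks $[\lambda\rho,\lambda(\rho+1))$, $\rho<\kappa\nu$, and into block $\rho$ put: (i) the columns of the \cite[Subsect.~6.3]{mejia} matrix responsible for $\add(\Nwf)=\mu_1$, $\cov(\Nwf)=\mu_2$, $\non(\Mwf)=\nu$, $\non(\Nwf)=\lambda$, $\cov(\Mwf)=\nu$ and $\cof(\Mwf)=\kappa$; (ii) a $T$-column with iterand $\Mor_{\dot{\Uwf}_\rho}$ for rows $\alpha>t(\rho)$ and $\mathds{1}$ below, where $\dot{\Uwf}_\rho$ is a $\Por_{t(\rho),\cdot}$-name for a non-principal ultrafilter on $\omega$, contributing a Mathias real $m_\rho$; and (iii), fixing for each $\alpha<\kappa$ a bookkeeping enumeration $\langle\dot{\Fwf}^{\rho}_{\alpha,\epsilon}\rangle_{\epsilon<\lambda}$ of all names for filter bases of size $<\nu$, the $T$-columns with iterand $\Mor_{\dot{\Fwf}^{\rho}_{g(\epsilon)}}$ for rows $\alpha>(g(\epsilon))_0$ and $\mathds{1}$ below, $\epsilon<\lambda$.

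Every cardinal-invariant value of \cite[Subsect.~6.3]{mejia} — in particular $\non(\Nwf)=\cfrak=\lambda$, $\mathbf{GCH}_\lambda$, $\add(\Nwf)=\mu_1$, $\cov(\Nwf)=\mu_2$, $\non(\Mwf)=\nu$, $\cov(\Mwf)=\nu$ and $\cof(\Mwf)=\kappa$ — is still forced by the same argument, because the inserted columns are absorbed by the preservation bookkeeping: Mathias forcing with a filter base (or an ultrafilter) is $\sigma$-centered, hence satisfies $(+^{\mu_1}_{\cdot,\in^*})$ and $(+^{\mu_2}_{\cdot,\pitchfork})$ by Lemmas~\ref{centeredAddNull} and~\ref{centeredFork}, and, having the shape of a $T$-column, it is handled by Lemma~\ref{fixedStar} in the preservation of the $\rhd$-unbounded reals $\dot{c}_\alpha$, so Theorem~\ref{PresvUnbdg} applies (clause~(i) for $\xi\in S$ by the choice of global iterand, clauses~(ii),(iii) vacuous since $U=L=\varnothing$). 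Corollary~\ref{MainMatrixConsq} with $\dfrak_\rhd=\min\{\dfrak,\rfrak\}$ then gives $\kappa\leq\min\{\dfrak,\rfrak\}$, whence $\dfrak=\cof(\Mwf)=\kappa$ and $\kappa\leq\rfrak$. For the two new bounds: since $\pfrak\leq\sfrak\leq\non(\Mwf)=\nu$, for $\pfrak=\nu$ it is enough to get $\nu\leq\pfrak$, and indeed if $\Fwf$ is a filter base of size $<\nu$ in $V_{\kappa,\gamma}$ then by Theorem~\ref{realsmatrixbelow} it reflects below some row $\alpha<\kappa$ and, depending on $<\nu$ coordinates while $\cf(\lambda\kappa\nu)=\nu$, also below some $\lambda\rho$; the bookkeeping then yields $\epsilon<\lambda$ with $\Fwf=\dot{\Fwf}^{\rho}_{\alpha,\epsilon}$ and $(g(\epsilon))_0=\alpha$, so the Mathias real of that column is a pseudo-intersection of $\Fwf$. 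Finally, $\{m_\rho:\rho<\kappa\nu\}$ (of size $\kappa$) is $\varpropto$-dominating, hence $\rfrak\leq\kappa$: given $x\in[\omega]^\omega\cap V_{\kappa,\gamma}$, reflect $x$ into some $V_{\alpha,\lambda\eta}$ and, using that $t^{-1}\{\alpha\}$ is cofinal in $\kappa\nu$, pick $\rho\in(\eta,\kappa\nu)$ with $t(\rho)=\alpha$; then $x$ lies in the model where $\dot{\Uwf}_\rho$ is interpreted, so $x\in\Uwf_\rho$ or $\omega\menos x\in\Uwf_\rho$, and since $m_\rho\subseteq^*A$ for all $A\in\Uwf_\rho$ we get $m_\rho\subseteq^*x$ or $m_\rho\subseteq^*\omega\menos x$, so that $x\varpropto m_\rho$. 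Thus $\rfrak=\kappa$ and~(c) holds in $V_{\kappa,\gamma}$.

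The step I expect to be the real obstacle is checking that the intricate $\blacktriangle$/$(+)$-preservation scheme of \cite[Subsect.~6.3]{mejia} — which is what actually pins $\non(\Nwf)=\lambda$ down while keeping $\add(\Nwf)$, $\cov(\Nwf)$, $\non(\Mwf)$ small — genuinely survives the insertion of the Mathias columns: one must make sure the chosen $\sqsubset$ and the hypotheses of Theorem~\ref{PresvUnbdg} are untouched, and that the small amoeba/random suborders keep their $(+^{\mu_i}_{\cdot,\cdot})$ properties in the fsi alongside the $\sigma$-centered Mathias factors. For item~(b) the extra care is that the $U$-columns force $\sqsubset\in\{<^*,\in^*\}$ in Theorem~\ref{PresvUnbdg}(ii), so that $\kappa\leq\dfrak$ comes from Corollary~\ref{MainMatrixConsq} together with the dominating Mathias reals, $\ufrak\leq\nu$ from the $\subseteq^*$-decreasing chain of $\nu$-many Mathias reals the nested ultrafilters produce (as in the proof of Theorem~\ref{MatrixCofN-large}(d)), and $\nu\leq\ufrak$ from $\ufrak\geq\rfrak\geq\bfrak\geq\pfrak=\nu$.
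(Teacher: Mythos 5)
Your proposal is correct and follows essentially the same template as the paper: start from $V$ as in Theorem~\ref{CovM-large} with $\mu_3=\mu_2$, build a matrix iteration of the form of Context~\ref{ContextMatrix} where each $\lambda$-block repeats the \cite[Subsect.~6.3]{mejia} columns plus a column of Mathias forcing with an ultrafilter and a bookkeeping block of small-filter Mathias columns, and read the new cardinal values from Corollary~\ref{MainMatrixConsq} and from the generic reals produced by the inserted columns. The only real difference is that the paper elects to prove item~(b) (putting the ultrafilter Mathias columns in $U$, using Lemma~\ref{mathiasStar} with $\sqsubset\in\{<^*,\in^*\}$, and keeping the Hechler iterand in a $T$-column because $(+_{\Dor,<^*})$ fails), while you prove item~(c) (with $U=L=\varnothing$, the ultrafilter Mathias posets sitting in $T$-columns via Lemma~\ref{fixedStar}, and $\sqsubset=\rhd$ so that Corollary~\ref{MainMatrixConsq} gives $\kappa\leq\min\{\dfrak,\rfrak\}$ at once). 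Since the paper explicitly only writes out one item and refers the reader to do the others analogously, your choice of~(c) is equally valid, and your closing sketch of~(b) — $U$-columns with nested ultrafilters yielding the $\subseteq^*$-decreasing chain of Mathias reals for $\ufrak\le\nu$, plus $\nu\le\pfrak\le\bfrak\le\rfrak\le\ufrak$ — is exactly the argument the paper gives. One small imprecision: you describe~(b) as using \lcom$\nu$-many $U$-columns\rcom, whereas the paper places a $U$-column at every $\lambda\rho$ ($\rho<\kappa\nu$) and then singles out the $\nu$-many Mathias reals at stages $\lambda\kappa\eta$ ($\eta<\nu$) as the witness to $\ufrak\le\nu$; this does not affect the substance of the argument.
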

\begin{proof}
   To prove (b), assume that $V$ is a model as in the conclusion of Theorem \ref{CovM-large} with $\mu_2=\mu_3$. Perform a matrix iteration
   $\Por_{\kappa,\lambda\kappa\nu}=
   \langle\langle\Por_{\alpha,\xi},\Qnm_{\alpha,\xi}\rangle_{\xi<\lambda\kappa\nu}\rangle_{\alpha\leq\kappa}$ as explained in Context \ref{ContextMatrix} with $S=L=\varnothing$, $U=\cj{\lambda\rho}{\rho<\kappa\nu}$ and the following for each $\rho<\kappa\nu$.
   \begin{enumerate}[(i)]
      \item If $\xi=\lambda\rho+1$, $\Dnm_\rho$ is a $\Por_{t(\rho),\xi}$-name for $\Dor^{V_{t(\rho),\xi}}$ and
            \[\Qnm_{\alpha,\xi}=\left\{
               \begin{array}{ll}
                   \mathds{1} & \textrm{if $\alpha\leq t(\rho)$,}\\
                   \Dnm_\rho & \textrm{if $\alpha>t(\rho)$.}
               \end{array}
               \right.\]
   \end{enumerate}
   For each $\alpha<\kappa$ fix a sequence $\langle\Anm_{\alpha,\gamma}^\rho\rangle_{\gamma<\lambda}$ of $\Por_{\alpha,\lambda\rho+2}$-names for \emph{all} the suborders of $\Aor^{V_{\alpha,\lambda\rho+2}}$ of size $<\mu_1$, a sequence $\langle\Bnm_{\alpha,\gamma}^\rho\rangle_{\gamma<\lambda}$ of $\Por_{\alpha,\lambda\rho+2}$-names for \emph{all} the suborders of $\Bor^{V_{\alpha,\lambda\rho+2}}$ of size $<\mu_2$ and a sequence $\langle\dot{\Fwf}^\rho_{\alpha,\gamma}\rangle_{\gamma<\lambda}$ of $\Por_{\alpha,\lambda\rho+2}$-names for \emph{all} the filter bases of size $<\nu$.
   \begin{enumerate}[(i)]
      \setcounter{enumi}{1}
      \item If $\xi=\lambda\rho+2+3\epsilon$ ($\epsilon<\lambda$), put
            \[\Qnm_{\alpha,\xi}=\left\{
                            \begin{array}{ll}
                               \mathds{1}, & \textrm{if $\alpha\leq(g(\epsilon))_0$,}\\
                               \Anm^\rho_{g(\epsilon)}, & \textrm{if $\alpha>(g(\epsilon))_0$.}
                            \end{array}
                     \right.\]
      \item If $\xi=\lambda\rho+2+3\epsilon+1$ ($\epsilon<\lambda$), put
            \[\Qnm_{\alpha,\xi}=\left\{
                            \begin{array}{ll}
                               \mathds{1}, & \textrm{if $\alpha\leq(g(\epsilon))_0$,}\\
                               \Bnm^\rho_{g(\epsilon)}, & \textrm{if $\alpha>(g(\epsilon))_0$.}
                            \end{array}
                     \right.\]
      \item If $\xi=\lambda\rho+2+3\epsilon+2$ ($\epsilon<\lambda$), put
            \[\Qnm_{\alpha,\xi}=\left\{
                            \begin{array}{ll}
                               \mathds{1}, & \textrm{if $\alpha\leq(g(\epsilon))_0$,}\\
                               \Mor_{\dot{\Fwf}^\rho_{g(\epsilon)}}, & \textrm{if $\alpha>(g(\epsilon))_0$.}
                            \end{array}
                     \right.\]
   \end{enumerate}
   By the arguments from \cite[Thm. 21]{mejia} and Theorem \ref{MatrixCont-large}, $\add(\Nwf)=\mu_1$, $\cov(\Nwf)=\mu_2$,
   $\pfrak=\non(\Mwf)=\nu$, $\cov(\Mwf)\geq\nu$, $\dfrak=\kappa$, $\non(\Nwf)=\cfrak=\lambda$ and $\mathbf{GCH}_\lambda$ are true in $V_{\kappa,\lambda\kappa\nu}$. We are just left with the proof of $\ufrak\leq\nu$, but this is witnessed by $\langle{m'_\eta\rangle}_{\eta<\nu}$
   where each $m'_\eta$ is the Mathias real added by $\Mor_{\Uwf_{\kappa,\lambda\kappa\eta}}$. This sequence is $\subseteq^*$-decreasing and generates an ultrafilter on $\omega$.
\end{proof}

\section{Questions}\label{SecQ}

\begin{question}\label{QplusSuslin}
   Can we get $\rfrak=\kappa$ instead of $\ufrak=\nu$ in Theorem \ref{MatrixCont-large}(g)?
\end{question}
The main issue is that the matrix iteration construction associated to the proof of that statement involves the use of $\Bor$ and $\Dor$, as explained in Context \ref{ContextMatrix}(2)(i), in cofinally many columns of the matrix. But, as $(+_{\Bor,\varpropto})$ does \underline{not} hold, we cannot use Corollary \ref{MainMatrixConsq} to get $\rfrak\leq\kappa$ in any generic extension.\\
A natural question would be to ask for which relations $\sqsubset$ presented in Subsection \ref{SubsecPartCase} does Lemma \ref{mathiasStar} hold. It does \underline{not} hold for $\eqcirc$ because, if so, assuming $\nu<\kappa$, we could construct a model for which the statement of Theorem \ref{MatrixCont-large}(a) holds and $\ufrak\leq\nu$, which is a contradiction. Also, this Lemma does \underline{not} hold for $\varpropto$ because, if so, the model obtained by a matrix iteration of dimensions $\kappa\times\kappa\nu$ as in Context \ref{ContextMatrix} with $U=\kappa\nu$, $\nu<\kappa$, both regular uncountable, would yield a model of $\kappa\leq\rfrak$ and $\ufrak\leq\nu$, which is a contradiction to the fact that $\rfrak\leq\ufrak$. Likewise, it does \underline{not} hold for $\rhd$. Note that those arguments are valid to justify that Lemma \ref{LaverStarPitchfork}(c) does \underline{not} hold for $\eqcirc$, $\varpropto$ or $\rhd$.  So we are left with
\begin{question}\label{mathiasStarPitchfork}
   Does Lemma \ref{mathiasStar} hold for $\sqsubset=\pitchfork$.
\end{question}
A positive answer to this question will allow, with the presented method of matrix iterations, to obtain models of the statements of Theorem \ref{MatrixCont-large}(b) and Theorem \ref{MatrixCofN-large}(b) but with $\ufrak=\nu$ in place of $\rfrak=\kappa$. Note that this cannot be done by using Laver forcing as explained in Context \ref{ContextMatrix}(2)(iii) because this forcing notion adds dominating reals.\\
Note that, in the models where we get $\rfrak=\kappa$ for the items of Theorems \ref{MatrixCont-large}, \ref{MatrixCofN-large} and \ref{MatrixNonN-large}, we did not get a value for $\ufrak$.
\begin{question}\label{Qufrak}
   Can we get $\ufrak=\kappa$ or $\ufrak=\lambda$ in the corresponding consistency statements?
\end{question}

\bibliographystyle{spmpsci}      


\end{document}